\newtheorem{theorem}{Theorem}[section]
\newtheorem{corollary}[theorem]{Corollary}
\newtheorem{proposition}[theorem]{Proposition}
\newtheorem{definition-proposition}[theorem]{Definition-Proposition}
\newtheorem{lemma-notation}[theorem]{Lemma-Notation}
\theoremstyle{definition}
\newtheorem{definition}[theorem]{Definition}
\newtheorem{example}[theorem]{Example}
\newtheorem{remark}[theorem]{Remark}
\newtheorem{notation}[theorem]{Notation}
\newcommand{\Z}{\mathbb{Z}}
\newcommand{\Q}{\mathbb{Q}}
\newcommand{\R}{\mathbb{R}}
\newcommand{\C}{\mathbb{C}}
\newcommand{\threepartdef}[6]
{
	\left\{
		\begin{array}{lll}
			#1 & \mbox{if } #2 \\
			#3 & \mbox{if } #4 \\
			#5 & \mbox{if } #6
		\end{array}
	\right.
}
\newcommand{\twobytwo}[4]
{
	\begin{pmatrix}
		#1 & #2 \\
		#3 & #4
	\end{pmatrix}
}
\let\@wraptoccontribs\wraptoccontribs\makeatother
\newcommand{\triv}{\mathbf{1}} %The trivial Dirichlet character
\newcolumntype{H}{>{\setbox0=\hbox\bgroup}c<{\egroup}@{}} %For the fullerene table
\begin{document}
\title{Exact enumeration of fullerenes}

\author{Philip Engel}
\address[Philip Engel]{University of Bonn}
\email{engel@math.uni-bonn.de}

\author{Peter Smillie}
\address[Peter Smillie]{Heidelberg University}
\email{psmillie@mathi.uni-heidelberg.de}

\address[Jan Goedgebeur]{KU Leuven}
\email{jan.goedgebeur@kuleuven.be}

\contrib[With an appendix by]{Jan Goedgebeur}

\maketitle

\begin{abstract}
A fullerene, or buckyball, is a trivalent graph on the sphere with only pentagonal
and hexagonal faces. Building on ideas of Thurston,
we use modular forms to give an exact formula for the number of
oriented fullerenes with a given number of vertices. 
\end{abstract}

\section{Introduction}

A {\it triangulation} $\mathcal{T}$ of $S^2$ is an embedded graph in the two-dimensional 
sphere whose complementary faces are triangles.
We say that $\mathcal{T}$ is {\it convex} if every vertex has valence
$6$ or less. The {\it curvature} of a vertex $v_i\in \mathcal{T}$ is $\kappa_i:=6-{\rm valence}(v_i)$ 
and the {\it curvature profile} $\{\kappa_1,\dots,\kappa_m\}$ is the multiset of nonzero curvatures.
It is a consequence of Euler's formula that $\sum \kappa_i=12$ and so there are at most $12$
vertices of positive curvature. The dual complex of a triangulation with exactly $12$ vertices of
positive curvature is a {\it buckyball} or {\it fullerene}---a trivalent graph on the sphere,
whose faces are pentagons and hexagons.

The terminology originates from the famous Buckminsterfullerene molecule, with chemical formula ${\rm C}_{60}$.
It is formed from $60$ carbon atoms, each one bonded to three other carbon
atoms in $sp^2$-hybridized orbitals, and forming a spherical molecule with carbon
rings of length $5$ and $6$, see Figure \ref{fig:buckyball}. The molecule was first discovered in
the laboratory in 1985 by Kroto, Heath, O'Brien, Curl, and Smalley \cite{Kroto:1985aa},
work which received the 1996 Nobel prize in chemistry. 

\begin{figure}
\includegraphics[width=1in]{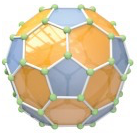}
\caption{The Buckminsterfullerene molecule}
\label{fig:buckyball}
\end{figure}

\subsection{Algorithmic enumeration}
Soon after the discovery of fullerenes, the question of their enumeration was
 explored by chemists, computer scientists, and mathematicians, resulting in a large body of literature: see
 Liu, Klein, Schmalz, and Seitz \cite{Liu:1991aa}, Manolopoulos and May \cite{Manolopoulos:1991aa},
 Manolopoulos and Fowler \cite{Manolopoulos:1992aa, Manolopoulos:1993aa},
 Sah \cite{Sah:1993aa}, Yoshida and Osawa \cite{Yoshida:1995aa}, Brinkmann and Dress \cite{BD97},
Hasheminezhad, Fleischner, and McKay \cite{Hasheminezhad:2008aa},  Brinkmann, Goedgebeur, and McKay \cite{BGM12,Goedgebeur:2015aa}.

The earliest algorithms, proposed by Manolopoulos {\it et al.~}enumerated fullerenes by spiraling outward from a face,
as one might peel an orange. A {\it spiral representation} $[i_1,\dots,i_{12}]$ of the fullerene is then simply the list of
$12$ indices corresponding to positions of the 12 pentagons in a face spiral of the given fullerene.
The {\it canonical spiral representation}  is the lexicographically minimal spiral representation.
For example, the canonical spiral representation
of Buckminsterfullerene is $$[1, 7, 9, 11, 13, 15, 18, 20, 22, 24, 26, 32].$$ Remarkably, there are fullerenes with
no spiral representation, the first counterexample being an isomer of ${\rm C}_{380}$ with tetrahedral
symmetry \cite{Brinkmann:2012ab}.

% larger fullerenes from smaller ones,
%by excising a patch of faces and replacing it with a larger patch having the same boundary. 

The spiral algorithm was supplanted by Brinkmann and Dress's 
algorithm, implemented in the program {\it fullgen}. It generates all fullerenes with a given number
of vertices by stitching together patches which are bounded by zigzag (Petrie) paths.
This was again supplanted by {\it buckygen}, developed
by Brinkmann, Goedgebeur, and McKay \cite{BGM12}.
It generates larger fullerenes from smaller ones,
by excising a patch of faces and replacing it with a larger patch having the same boundary. 
There are three irreducible fullerenes in the {\it buckygen} algorithm, isomers
of ${\rm C}_{20}$, ${\rm C}_{28}$, and ${\rm C}_{30}$.

Because of its recursion, {\it buckygen} is much more efficient than {\it fullgen},
especially for generating fullerenes up to $n$ vertices.
Contradicting results of {\it buckygen} and {\it fullgen} led to the detection
of a small programming error in {\it fullgen}, which led to a discrepancy in the counts beyond
$136$ vertices. After this was fixed, the two programs agreed up to 380
vertices, giving a high degree of confidence in their accuracy.

The input to this paper was generated with {\it buckygen}, see Appendix \ref{appendix} by Jan Goedgebeur,
with a built-in setting that counts enantiomorphic (mirror) fullerenes as distinct. This makes
 the mathematical enumeration somewhat less complicated; we hope to return
 to the case of enumerating unoriented fullerenes in future work.

\subsection{Mathematical enumeration}
Thurston's paper ``Shapes of Polyhedra" \cite{thurston} introduced the moduli spaces
$\mathcal{M}_{\alpha_1\,,\,\dots\,,\,\alpha_k}$ of
{\it flat cone spheres} (modulo scaling): These are flat metrics on $S^2$ with $k$
conical singularities $p_i$
of cone angles $2\pi-\alpha_i$ and $\alpha_i\in (0,2\pi)$. The Gauss-Bonnet formula
implies $\sum \alpha_i = 4\pi$.
Alexandrov's theorem \cite{alexandrov} states that
a flat cone sphere is isometric to a convex polyhedron in $\R^3$,
unique up to rigid motion. 

Thurston proved that $\mathcal{M}_{\alpha_1\,,\,\dots\,,\,\alpha_k}$ admits a canonical Hermitian metric,
endowing it with the structure of a locally symmetric space: It admits a local isometry \cite[Thm.~0.2]{thurston} to
complex hyperbolic space $$\mathbb{CH}^{k-3}:= \mathbb{P}\{v\in \C^{1,{k-2}}\,\big{|}\,v\cdot v>0\},$$
also called the complex ball. The metric completion $\overline{\mathcal{M}}_{\alpha_1\,,\,\dots\,,\,\alpha_k}$
is a complex hyperbolic cone manifold, and admits a moduli-theoretic interpretation,
corresponding to allowing collections of cone points $\{p_i\,\big{|}\,i\in I\}$
to coalesce, so long as $\sum_{i\in I} \alpha_i<2\pi$.

The completed Thurston moduli space of interest to this paper is
$$M:=\overline{\mathcal{M}}_{\pi/3\,,\,\dots\,,\,\pi/3}$$ as every convex triangulation defines
a point in it, by declaring each triangle metrically equilateral of side length $1$.
A convex triangulation corresponds to a fullerene if and only if the corresponding
flat cone sphere has $12$ distinct points of positive curvature, or equivalently, defines
a point in the open stratum $\mathcal{M}_{\pi/3\,,\,\dots\,,\,\pi/3}$.
There are $47$ possible curvature profiles for
convex triangulations of the two sphere
$S^2$, in bijection with the $47$ partitions $\{\kappa_1,\dots,\kappa_m\}$ of $12$ into parts
of size $5$ or less. Each such partition corresponds to a {\it curvature stratum}:
$$M=\overline{\mathcal{M}}_{\pi/3\,,\,\dots\,,\,\pi/3}=\!\!\!\!\!\!\coprod_{\substack{ \sum \kappa_i=12 \\ 
\kappa_i\in\{1,2,3,4,5\}}} \!\!\!\!\!\!\mathcal{M}_{\kappa_1\pi/3\,,\,\dots\,,\,\kappa_m\pi/3 }.$$

Thurston proved that $M=\mathbb{P}\Gamma\backslash \mathbb{CH}^9$
is a global ball quotient for an arithmetic subgroup $\Gamma\subset {\rm U}(1,9)$;
it is the largest-dimensional completed Thurston moduli space which is a global quotient.
See also \cite{schwartz}.

\begin{definition} An {\it Eisenstein lattice} $\Lambda$ is a finitely generated $\Z[\zeta_6]$-module
$\Lambda\simeq \Z[\zeta_6]^k$, together with a non-degenerate
$\Z[\zeta_6]$-valued Hermitian inner product $v\star w$.
Its {\it signature} $(m,n)$ is the signature of
 $\Lambda\otimes_{\Z[\zeta_6]}\C\simeq \C^{m,n}$.  \end{definition}

\begin{theorem}[{\cite[Thm.~0.1]{thurston}}] \label{thm:Thurston}
There is an Eisenstein lattice $\Lambda$ of signature $(1,9)$
for which convex triangulations of the sphere, up to oriented isomorphism,
are in bijection with $\Gamma\backslash \Lambda^+$. Here $\Lambda^+\subset \Lambda$
is the set of vectors of positive norm and
$\Gamma={\rm U}(\Lambda)$ is the group of unitary isometries. 

Furthermore, for a vector $v\in \Lambda^+$ corresponding to a triangulation,
the number of triangles is $\tfrac{2}{3}(v\star v)$.
\end{theorem}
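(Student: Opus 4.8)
The plan is to attach to each triangulation a twisted cohomology class, to check that the equilateral structure forces this class to be integral over $\Z[\zeta_6]$, and to identify the resulting $\Z[\zeta_6]$-lattice with $\Lambda$. Setting every triangle equilateral of side $1$ turns a convex triangulation $\mathcal{T}$ into a flat metric on $S^2$ with a cone point $p_i$ of angle $2\pi-\kappa_i\pi/3$ at each positively curved vertex; since $1\le\kappa_i\le 5$ all cone angles lie in $(0,2\pi)$, so $\mathcal{T}$ is a point of $M$ lying in the stratum labelled by $\{\kappa_1,\dots,\kappa_m\}$. Developing this flat structure gives a multivalued map to $\C$ whose differential is a single-valued holomorphic $1$-form $\omega$ with values in the rank-one local system $\mathcal{L}$ on $\Sigma:=S^2\setminus\{p_1,\dots,p_{12}\}$ with monodromy $\zeta_6^{\kappa_i}$ about $p_i$. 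I would take $[\omega]\in H^1(\Sigma,\mathcal{L})$ as the invariant of $\mathcal{T}$.

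Next I would construct the lattice. As each monodromy $\zeta_6^{\kappa_i}$ lies in $\Z[\zeta_6]^\times$ and $\prod_i\zeta_6^{\kappa_i}=\zeta_6^{12}=1$, the local system is defined over $\Z[\zeta_6]$ with trivial total monodromy, and I would set $\Lambda:=H^1(\Sigma,\mathcal{L}_{\Z[\zeta_6]})$. An Euler-characteristic count for a rank-one local system on the $12$-punctured sphere with nontrivial local monodromies gives $H^0=H^2=0$ and $\dim_\C H^1(\Sigma,\mathcal{L})=12-2=10$. For the inner product I would use the cup product $H^1(\Sigma,\mathcal{L})\times H^1(\Sigma,\overline{\mathcal{L}})\to H^2_c(\Sigma,\C)\cong\C$ together with complex conjugation; its signature is read off from the Hodge decomposition, the one-dimensional $(1,0)$-part spanned by $\omega$ being positive and the nine-dimensional $(0,1)$-part negative, giving $(1,9)$. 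This identifies $\Lambda\otimes_{\Z[\zeta_6]}\C\cong\C^{1,9}$ and recovers $\mathbb{CH}^9=\mathbb{P}\{v\,:\,v\star v>0\}$.

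The bijection then comes from the period map $\mathcal{T}\mapsto[\omega]$. Integrality is the heart of the forward direction: each edge of $\mathcal{T}$ develops to a translate of a sixth root of unity, so every period $\int_\gamma\omega$ along an integral cycle is a $\Z[\zeta_6]$-combination of edge vectors, whence $[\omega]$ pairs integrally and lies in $\Lambda$; positivity $[\omega]\star[\omega]>0$ follows from positivity of area (below). Re-marking acts through $\Gamma=\mathrm{U}(\Lambda)$, so the class is well-defined in $\Gamma\backslash\Lambda^+$. For the reverse direction I would start from $v\in\Lambda^+$, whose projective class lies in $\mathbb{CH}^9$ and, via the identification $M=\mathbb{P}\Gamma\backslash\mathbb{CH}^9$ and Thurston's local isometry \cite[Thm.~0.2]{thurston}, determines a flat cone metric; the $\Z[\zeta_6]$-integrality of the periods of $v$ lets me pull back the standard equilateral triangulation of $\C$ by $\Z[\zeta_6]$ through the developing map, producing an honest convex triangulation whose cone angles $\le 2\pi$ are guaranteed by convexity. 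The main obstacle I anticipate is precisely this integral refinement of Thurston's projective statement: one must show that every positive lattice \emph{vector} (not merely its line) is realized, and uniquely, which requires controlling the metric completion and the coalescence of cone points uniformly across all $47$ strata.

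Finally I would fix the norm. For a flat structure defined by the twisted form $\omega$, the total area is $\tfrac{i}{2}\int_{S^2}\omega\wedge\overline{\omega}$, a fixed positive multiple of $v\star v$ by the definition of $\star$; on the other hand it equals $F$ times the area $\tfrac{\sqrt3}{4}$ of a unit equilateral triangle, where $F$ is the number of triangles. Comparing the two expressions — the factor $\tfrac{\sqrt3}{2}=\operatorname{Im}\zeta_6$ entering through the normalization of $\star$ — fixes the constant and yields $F=\tfrac{2}{3}(v\star v)$.
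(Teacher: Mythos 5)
The paper does not actually prove this statement: it is Thurston's Theorem~0.1, imported by citation and used as a black box (the body of the paper only re-expresses it in the language of Lauricella differentials and period integrals in Section~2). So the comparison here is between your sketch and Thurston's argument rather than a proof contained in the paper. Your outline is the standard one --- developing map, twisted cohomology of a rank-one $\Z[\zeta_6]$-local system, signature $(1,9)$ from the Hodge decomposition, integrality of periods forced by the equilateral structure, and area equals norm --- and these ingredients are correct and consistent with how the paper itself treats these objects.

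Two genuine gaps remain. First, your lattice is $H^1$ of the $m$-punctured sphere, where $m$ is the number of positively curved vertices; the count $\dim H^1=12-2=10$ is valid only on the open stratum $m=12$. For the other $46$ curvature profiles your construction yields a lattice of rank $m-2<10$ with different local monodromies, so as written you obtain a different lattice for each stratum rather than a single $\Lambda$ receiving \emph{all} convex triangulations. Closing this requires the coalescence embeddings of each $\Lambda_\kappa$ into $\Lambda$ (cf.\ Proposition~\ref{curve-sublattice} and the ``coning off'' orthogonal projection used later in the paper), which your sketch does not supply. Second, bijectivity at the level of vectors --- that every $\Gamma$-orbit in $\Lambda^+$ is realized by exactly one triangulation --- is the real content of the theorem; you correctly flag it as the main obstacle but then lean on the identification $M=\mathbb{P}\Gamma\backslash\mathbb{CH}^9$, which is itself the hard global statement in Thurston's paper (completeness of the metric completion, the cone-manifold structure along the strata, and discreteness of $\Gamma$). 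Citing the local isometry of \cite[Thm.~0.2]{thurston} does not give surjectivity of the period map onto the quotient, nor injectivity. As a faithful outline of the known proof your proposal is fine; as a self-contained argument it is incomplete at precisely the point you identify.
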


The point in $\mathbb{P}\Gamma\backslash \mathbb{CH}^9 =M$
corresponding to the triangulation is the $\Gamma$-orbit of the complex line $\C v\in \mathbb{CH}^9$.

Thurston's theorem bounds the growth rate for the number of fullerenes with $2n$ vertices
as $O(n^9)$. The computational chemists were aware of this fact and Thurston's result,
see for instance the survey of Schwerdtfeger, Wirz, and Avery \cite{Schwerdtfeger:2015aa}.

In 2017, the authors used Thurston's theorem to enumerate convex triangulations, when counted with
an appropriate weight:

\begin{definition}\label{mass} Define the {\it mass} of a convex triangulation to be
$$m(\mathcal{T}):=\frac{1}{|{\rm Aut}^+(\mathcal{T})|}\prod_{i=1}^m\frac{(1-\kappa_i/6)^{\kappa_i-1}}{\kappa_i!}$$
where $\kappa_i$ are the curvatures and ${\rm Aut}^+(\mathcal{T})$ is the group of oriented
automorphisms. Alternatively, we can define $m(\mathcal{T})= {|{\rm Stab}_\Gamma(v)|}^{-1}$
where $v\in \Lambda^+$ corresponds to $\mathcal{T}$.
\end{definition}

\begin{theorem}[{\cite[Thm.~1.1]{es}}]\label{weighted-count} The mass-weighted sum of all convex
triangulations with $2n$ triangles is exactly $$\sum_{|\mathcal{T}| = 2n} m(\mathcal{T}) =
\frac{809}{2^{15}3^{13}5^2} \sum_{d\mid n}d^9.$$
\end{theorem}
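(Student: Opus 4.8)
The plan is to translate the left-hand side into a weighted count of lattice vectors, recognize its generating function as a modular form of weight $10$, and then pin down the resulting multiple of the Eisenstein series $E_{10}$ by an arithmetic mass computation.

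First I would combine Theorem \ref{thm:Thurston} with the second description of the mass in Definition \ref{mass}. Since a vector $v \in \Lambda^{+}$ corresponds to a triangulation with $\tfrac{2}{3}(v\star v)$ triangles, having $2n$ triangles is equivalent to $v \star v = 3n$, and $m(\mathcal{T}) = |{\rm Stab}_{\Gamma}(v)|^{-1}$. Hence
\[
\sum_{|\mathcal{T}| = 2n} m(\mathcal{T}) \;=\; \sum_{\substack{[v] \in \Gamma\backslash\Lambda^{+} \\ v\star v = 3n}} \frac{1}{|{\rm Stab}_{\Gamma}(v)|} \;=:\; a_{n},
\]
so the theorem becomes the statement that $a_n = \tfrac{809}{2^{15}3^{13}5^2}\,\sum_{d\mid n} d^9$. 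The $a_n$ are thus mass-weighted representation numbers of the Eisenstein lattice $\Lambda$.

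Next I would show that the generating series $f(\tau) = \sum_{n \ge 0} a_n q^{n}$ (with $q = e^{2\pi i \tau}$ and a formal constant term $a_0$ determined by modularity) is a holomorphic modular form of weight $10$. The underlying real quadratic space $\Lambda \otimes_{\Z[\zeta_6]} \R \cong \C^{1,9}$ has signature $(2,18)$, so the Siegel theta function attached to $\Lambda$ transforms with weight $\tfrac{1}{2}(2+18) = 10$. Since $b^{+}=2$, this theta function is non-holomorphic and lives on the symmetric space $\mathbb{CH}^{9}$; the key input is the Siegel--Weil formula, which identifies its integral over $\mathbb{P}\Gamma\backslash\mathbb{CH}^{9}$ — whose Fourier coefficients are exactly the weighted counts $a_n$ — with a holomorphic Eisenstein series of weight $10$. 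By strong approximation the genus of $\Lambda$ is a single class, so averaging over the genus collapses to $f$ itself, and the near-unimodularity of $\Lambda$ places $f$ at level $1$, i.e.\ $f \in M_{10}({\rm SL}_2(\Z))$. This space is one-dimensional and contains no cusp forms, so $f = c\,E_{10}$ for a single constant $c$; together with $E_{10} = 1 - 264\sum_{n\ge1}\sigma_9(n)q^n$ this forces $a_n$ to be a fixed rational multiple of $\sum_{d\mid n}d^9$ for every $n \ge 1$.

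It then remains to identify the constant as $\tfrac{809}{2^{15}3^{13}5^2}$, which I would read off from the constant term of the Siegel--Weil Eisenstein series, equivalently from the total mass of the genus of $\Lambda$ expressed as a product of local densities. Over $\Q(\sqrt{-3})$ these densities assemble into special values of $\zeta(s)$ and of the Dirichlet $L$-function $L(\chi_{-3},s)$ at the relevant negative integers, packaged by the corresponding Bernoulli and generalized Bernoulli numbers; their product yields the stated rational number, with the prime $809$ emerging as its numerator. The value can be cross-checked against the directly computed mass-weighted counts for small $n$ obtained from Definition \ref{mass}.

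The main obstacle is the modularity step. Making the Siegel--Weil identification rigorous in this indefinite Hermitian setting requires controlling the regularization and convergence of the theta integral in weight $10$, verifying that the genus of the Eisenstein lattice consists of a single class, and checking that its discriminant data really produce a level-$1$ scalar form rather than a vector-valued form for the Weil representation (which would spoil the clean appearance of $\sum_{d\mid n}d^9$). Once modularity and level are established the problem becomes rigid, as finite-dimensionality of $M_{10}({\rm SL}_2(\Z))$ does the rest; the only remaining effort is the lengthy but essentially mechanical local density computation that evaluates the constant.
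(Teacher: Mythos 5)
Your overall skeleton matches the paper's: translate the mass-weighted count into weighted representation numbers of $\Lambda$ via Theorem \ref{thm:Thurston}, show the generating function is a holomorphic modular form of weight $10$ for ${\rm SL}_2(\Z)$, use $\dim M_{10}({\rm SL}_2(\Z))=1$ to force $F=cE_{10}$, and then fix $c$. But you substitute different mechanisms at the two key steps. For modularity, the paper (following \cite{es}) integrates the Siegel theta function over $\mathbb{P}\Gamma\backslash\mathbb{CH}^9$ to get a Maass form of weight $-8$ and applies the raising operator nine times (Bol's identity) to land in weight $10$; this avoids the Siegel--Weil formula entirely, and is cheaper than your route, since Siegel--Weil in the indefinite Hermitian setting carries exactly the regularization and class-number baggage you list as obstacles --- baggage that is unnecessary here because the absence of cusp forms in weight $10$ already gives rigidity. (The paper's Remark \ref{siegel-weil} explicitly notes that the Siegel--Weil route is available but ``long and difficult'' and was not the method chosen.) For the normalization, the paper computes the $q^1$-coefficient directly as the weighted count of the two triangulations with $2$ triangles (the doubled triangle and doubled pediment), a finite combinatorial computation from Definition \ref{mass}; you instead propose extracting $c$ from the constant term via local densities and $L$-values, which is workable but strictly harder, and in effect you fall back on the paper's method when you mention cross-checking against small $n$. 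What your approach buys is a conceptual explanation (via Siegel--Weil) of \emph{why} the answer is Eisenstein, which the paper exploits later for the lower-dimensional strata; what the paper's approach buys is a short, self-contained proof of this particular identity.

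Two small corrections: the Siegel theta function of the signature-$(2,18)$ lattice is modular of weight $(1,9)$, not of holomorphic weight $10$; weight $10$ appears only after the raising operators (or on the Eisenstein side of Siegel--Weil), so your heuristic $\tfrac{1}{2}(2+18)=10$ gets the right number for the wrong object. And the level-$1$, scalar-valued issue you flag as a worry is immediate: $(\Lambda,\cdot)$ is even unimodular, isometric to ${\rm II}_{2,18}$, so $\Lambda^*/\Lambda$ is trivial and there is no vector-valued subtlety to check.
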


The key point of the proof is to integrate the Siegel theta function 
over the quotient $\mathbb{P}\Gamma\backslash \mathbb{CH}^9$. This produces
a Maass form of weight $-8$, and successively applying either the scaled raising operator
$\tfrac{1}{2\pi i}\partial_\tau -\frac{k}{4\pi}({\rm Im}\,\tau)^{-1}$ or $q\partial_q$ nine times (these give
the same result by Bol's identity), one gets the
mass-weighted generating function $F(q)$ for convex triangulations. Furthermore, it is 
necessarily a modular form of weight $10$ for ${\rm SL}_2(\Z)$.
Up to a constant scaling factor, this pins down $F(q)$
uniquely as the Eisenstein series $E_{10}(q) = -\frac{1}{264}+\sum_{n\geq 1} \big( \sum_{d\mid n}d^9\big)q^n$.
To determine the constant, it suffices to compute the 
the weighted count of triangulations with $2$ triangles. 
%
%\begin{proof}[Proof sketch.] The key tool is the {\it Siegel theta correspondence}
%$\Theta(p,\tau)$, a function on $\mathbb{CH}^9\times \mathbb{H}$. Here 
%$\mathbb{H}:=\{\tau\in \C\,\big{|}\,{\rm Im}(\tau)>0\}$ is the upper half-plane. It is defined to be
%$$\Theta(p,\tau):=\sum_{v\in \Lambda} q^{\frac{1}{2} v^+\cdot v^+}\overline{q}^{-\frac{1}{2} v^-\cdot v^-}$$
%where $q=e^{2\pi i \tau}$ and $v^+$ and $v^-$ are, respectively, the projections of $v$
%to the positive line $p$ and to the negative-definite space $p^\perp$.
% Integrating the Siegel theta
%correspondence, we define a function
%$$g(\tau) = \frac{({\rm Im}\,\tau)^9}{6}\int_{M} \Theta(p,\tau)\,dp$$
%on the upper half-plane. One computes that 
%$$f(\tau):=\left(\frac{1}{2\pi i}\partial_\tau\right)^9 g(\tau) = 
%\frac{9! {\rm Vol}(\Gamma\backslash \mathbb{CH}^9)}{6(4\pi)^9}+\sum_{n\geq 1} c_nq^n$$ 
%where $q=e^{2\pi i \tau}$ and $c_n$ is the mass-weighted count of convex triangulations with $2n$ triangles.
%
%One can prove that $f(\tau)$ is a weight $10$ modular form for ${\rm SL}_2(\Z)$,
%and so lies in the one-dimensional vector space $\C E_{10}(\tau)$ with $E_{10}(\tau)$
%the Eisenstein series. The coefficient of $E_{10}(\tau)$ is determined by computing $c_1$
%i.e. the weighted count of triangulations with $2$ triangles. We use the formula
%$E_{10}(\tau) = -\frac{1}{264}+\sum_{n\geq 1} \big( \sum_{d\mid n}d^9\big)q^n$ to conclude the result.
%\end{proof}

Let $G$ be a group acting on a set $S$, with finitely many orbits
$\{G\cdot s\,\big{|}\,s\in S\}$, and finite
stabilizers. It is most natural to weight the count of orbits by
$|{\rm Stab}_G(s)|^{-1}$ where $s\in S$ is an orbit representative.
This way, passing to a finite index subgroup $H\subset G$ changes the
weighted count by the index $[G: H]$. It is this manner
of enumeration employed in Theorem \ref{weighted-count},
with $G={\rm U}(\Lambda)$ and $S= \Lambda^+$.

\subsection{Main results} 
The enumeration of fullerenes by chemists is {\it not} a mass-weighted count.
In contrast to the mass-weighted enumeration, we call an unweighted count of orbits
{\it naive enumeration}.

\begin{definition} Let ${\rm BB}(q) := \sum_{n\geq 1} {\rm BB}_nq^n$ where
${\rm BB}_n$ is the naive count of oriented fullerenes with $2n$ vertices. We have
$${\rm BB}(q) = q^{10}+q^{12}+q^{13}+3q^{14}+3q^{15}+10q^{16}+9q^{17}+23q^{18}+\cdots.$$
\end{definition}

Obviously, ${\rm BB}(q)\in \Z[[q]]$ has integer coefficients, while the weighted generating
function for convex triangulations $F(q)\in \Q[[q]]$ does not. Even though the $q^n$-coefficients of 
$F(q)$ and ${\rm BB}(q)$ as asymptotic to each other as $n\to \infty$,
the series differ in two key ways:
\begin{enumerate}
\item Counts for $F(q)$ are over the {\it completed} space
$M=\overline{\mathcal{M}}_{\pi/3\,,\,\dots\,,\,\pi/3}$ whereas counts for ${\rm BB}(q)$
are over the open stratum $\mathcal{M}_{\pi/3\,,\,\dots\,,\,\pi/3}$. \vspace{2pt}
\item Counts for $F(q)$ are weighted, but counts for ${\rm BB}(q)$ are naive. Both
$\{\kappa_1,\dots,\kappa_m\}$ and ${\rm Aut}^+(\mathcal{T})$ affect the weight.
\end{enumerate}

Our goal is to surmount these obstacles, and count 
fullerenes naively. While conceptually simpler, this is mathematically
much more complicated.

The main idea is to prove that for each curvature profile and automorphism group,
there is an appropriately weighted count of such triangulations which is again
a (quasi)modular form, but of lower weight and higher level. 
Then, by M\"obius inversion on strata, the naive count ${\rm BB}(q)$ has an expression
as a weighted inclusion-exclusion of weighted counts.
So ${\rm BB}(q)$ is a linear combination of modular forms,
but of mixed levels and weights. Sah \cite[Thm.~3.4]{sah2} was the first
to realize ${\rm BB}_n$ would have a formula of an arithmetic nature.

Our formula is proven by bounding the levels and weights
of these modular forms explicitly, and using the {\it buckygen} program as input---we
solve a large linear system for the coefficients of the linear combination. Fortuitously,
 {\it buckygen} can compute $200$ Fourier
coefficients ${\rm BB}_n$. This exceeds the $152$-dimensional space of
modular forms in which we prove ${\rm BB}(q)$ lives.
Once a formula in a finite-dimensional space is established,
the ambient space of modular forms is pruned, resulting in our main theorem:

\begin{theorem} ${\rm BB}(q)$ is an explicit linear combination of Eisenstein series,
of weight at most $10$ and levels lying in $\{1,2,3,4,5,6,9,12,15,18\}.$ \end{theorem}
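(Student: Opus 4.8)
The plan is to reduce the naive count ${\rm BB}(q)$ to a finite $\Q$-linear combination of mass-weighted counts, each an Eisenstein series, by a double M\"obius inversion: once over the curvature strata and once over the possible automorphism groups. Recall from Theorem \ref{weighted-count} that integrating the Siegel theta function of $\Lambda$ over $M=\mathbb{P}\Gamma\backslash\mathbb{CH}^9$ and applying raising operators yields the full mass-weighted generating function $E_{10}$. The crucial observation is that the same recipe applies verbatim to every sub-ball-quotient arising inside $M$: each curvature stratum $\mathcal{M}_{\kappa_1\pi/3,\dots,\kappa_m\pi/3}$ is itself a Thurston moduli space for an Eisenstein lattice $\Lambda'$ of signature $(1,m-3)$, and the fixed locus $M^\gamma$ of a finite-order isometry $\gamma\in\Gamma$ is the ball quotient of the invariant Eisenstein sublattice $\Lambda^\gamma\subseteq\Lambda$. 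In each case the theta integral produces a modular form whose weight is one more than the complex dimension of the sub-ball-quotient, hence at most $10$, and I claim it is in fact an Eisenstein series. This last point is exactly a Siegel--Weil statement: integrating a theta series over an arithmetic quotient returns an Eisenstein series, and the raising and lowering operators used to reach the holomorphic weight preserve this, so no cusp forms appear.

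First I would make the curvature stratification precise. Since the product factor $\prod_i (1-\kappa_i/6)^{\kappa_i-1}/\kappa_i!$ in Definition \ref{mass} equals $1$ exactly on the open stratum $1^{12}$ of fullerenes, the open-stratum mass is obtained from $E_{10}$ by subtracting the contributions of the other $46$ curvature profiles, each of which is a constant multiple of a theta integral over its stratum and hence an Eisenstein series by the previous paragraph. Next I would remove the automorphism weighting. For $v\in\Lambda^+$ corresponding to $\mathcal{T}$ one has $m(\mathcal{T})=|{\rm Stab}_\Gamma(v)|^{-1}$, so the discrepancy between ${\rm BB}(q)$ and the open-stratum mass is governed entirely by vectors with nontrivial stabilizer. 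Grouping such vectors by the conjugacy class of their stabilizer $H\le\Gamma$ and M\"obius-inverting over the finite poset of these subgroups expresses the naive count as $\sum_H \mu(H)\,\Theta_H$, where $\Theta_H$ is the mass of the fixed locus $M^H$---again a theta integral over a sub-ball-quotient, and so again an Eisenstein series.

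It remains to bound the weights and levels and to finish with a finite computation. The weight bound is immediate, as every sub-ball-quotient that occurs has complex dimension at most $9$. The level is the delicate part, and I expect it to be the main obstacle. Each $\Theta_H$ transforms under the congruence subgroup cut out by the discriminant form of the relevant invariant sublattice, and one must control these discriminant forms uniformly as $\gamma$ ranges over all finite-order isometries and as cone points coalesce. Because $\Z[\zeta_6]$ has the single ramified prime $3$, and because coalescing at most five cone points of angle $\pi/3$ and imposing cyclic symmetries of bounded order can only introduce denominators supported at $2,3,5$ together with the factor from $\sqrt{-3}$, I would argue that the level always lies in $\{1,2,3,4,5,6,9,12,15,18\}$; proving that no further prime power intervenes is the crux of the estimate. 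Granting this, ${\rm BB}(q)$ lies in the explicit $152$-dimensional space spanned by Eisenstein series of these weights and levels. Finally, the $200$ coefficients ${\rm BB}_n$ computed by {\it buckygen} (Appendix \ref{appendix}) over-determine the coordinates of ${\rm BB}(q)$ in this space; solving the resulting linear system pins the combination down uniquely, with the surplus coefficients serving as an independent consistency check, and discarding the basis elements occurring with coefficient zero yields the stated list of levels.
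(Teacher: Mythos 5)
Your overall strategy---stratify by curvature profile and automorphism group, show each stratum's weighted count is a theta integral and hence a modular form of controlled weight and level, inclusion--exclude, and pin down the answer by solving a linear system against the $200$ {\it buckygen} coefficients---is the same as the paper's. The weight bound (weight $=1+$ complex dimension $\leq 10$) and the final pruning step are also as in the paper. But there is a genuine gap at exactly the point you flag as ``the crux of the estimate'': the level bound. Asserting that only the primes $2,3,5$ and the ramified prime $3$ can intervene is not a proof, and the paper's entire Section 3 is devoted to making this precise. The mechanism is: (i) the curvature sublattice $\Lambda_\kappa$ is identified as the perpendicular of an embedded sum of Eisenstein root lattices $\bigoplus_i A^{\Z[\zeta_6]}_{\kappa_i-1}$ (whose underlying $\Z$-lattices are $0, A_2, D_4, E_6, E_8$), so its discriminant group is a subquotient of $\bigoplus_i \Delta_{\kappa_i}$ and in particular is $L(\kappa)$-torsion with $L(\kappa)\in\{1,2,3,6\}$; (ii) passing to orbitriangulations enlarges $\Lambda_{\mu^u}$ to $\Lambda_\mu\subset \tfrac{1}{2(1+\zeta_6)}\Lambda_{\mu^u}$, which only saturates summands of the discriminant; and (iii) the substitution $q\mapsto q^{|G|}$ multiplies the level by $|G|$, yielding $\Gamma(L(\mu),|G|)$. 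Without steps (i)--(iii) you have no finite-dimensional ambient space to solve in, and the $200$ computed coefficients prove nothing. Note also that the list $\{1,2,3,4,5,6,9,12,15,18\}$ is \emph{not} the a priori bound (the ambient space $\mathbb{O}$ contains forms for $\Gamma(6,12)$, i.e.\ level $72$ subgroups); it is read off a posteriori from the explicit formula after pruning, so you should not try to prove it in advance.

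Two further points. First, your appeal to Siegel--Weil to exclude cusp forms a priori is the alternative route the paper describes in Remark \ref{siegel-weil} and explicitly declines to carry out; the paper instead allows cusp forms in the $152$-dimensional space and finds a posteriori that they do not occur. Either is legitimate, but you cannot both invoke Siegel--Weil and quote the $152$-dimensional space (which includes cusp forms) without deciding which argument you are making. Second, the one-dimensional strata with cusps (the $k$-tubes, and more generally ${\rm len}(\mu)=4$) produce only \emph{quasimodular} forms of weight $2$: the theta integral converges conditionally and acquires a $c/{\rm Im}\,\tau$ correction, which is why $E_2(q^k)$ terms appear. Your blanket claim that every theta integral yields an honest Eisenstein series silently fails there and must be patched by adjoining the $E_2$'s to the ambient space, as the paper does.
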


More precise statements are given in the body
of the paper, see Theorem \ref{main}. Using this explicit formula, we can compute the number
of oriented fullerenes far beyond {\it buckygen}. In fact, evaluating the formula is computationally
only as hard as factoring $n$.

\section{Strata of triangulations}

We must understand the stratification of 
$M=\overline{\mathcal{M}}_{\pi/3\,,\,\dots\,,\,\pi/3}=\mathbb{P}\Gamma\backslash \mathbb{CH}^9$
by curvatures and automorphism groups in more detail. 

\begin{definition} The {\it automorphism stratification} of $M$
is the stratification by oriented automorphism groups ${\rm Aut}^+(\mathcal{T})$.
Let $M_G\subset M$ denote the stratum of flat
cone spheres with ${\rm Aut}^+(\mathcal{T})=G$. So $M=\coprod_G M_G$. \end{definition}

A flat cone sphere can be specified by a {\it Lauricella differential}, i.e.~a multivalued 
differential $\omega$ on $\mathbb{P}^1$ with fractional pole orders (see \cite{dm, looijenga}). 
Such a differential defines a flat structure $\mathcal{T}$ on $\mathbb{P}^1\simeq S^2$ by declaring
$z$ a local flat coordinate if $dz=\omega$, so that the flat metric is $|\omega|^2$. Two Lauricella
differentials $\omega_1$ and $\omega_2$ determine the same flat cone sphere,
mod scaling, if and only if $\omega_1 = c \omega_2$ for $c\in \C^*$.
Conversely, every flat cone sphere is naturally a Riemann surface isomorphic
to $\mathbb{P}^1$, and defines a Lauricella differential by declaring $\omega =dz$ for 
a local flat coordinate $z$. 

Given any 12 points $p_i \in \mathbb{P}^1$, no more than 5 of which are
simultaneously equal, and any constant $c \in \C^*$, we form the Lauricella differential
\begin{equation}\label{eqn:lauricella}
\omega = c(x-p_1)^{-1/6}\cdots (x-p_k)^{-1/6}\,dx
\end{equation}
in which the product is over $p_i \neq \infty$. Then the resulting flat
cone sphere lies in $M$, and conversely every point of $M$ can be described by
a Lauricella differential of this form with $c = 1$. Thus, $M$ also admits a description as the
space of $12$ unordered points on $\mathbb{P}^1$, in which up to $5$ points can
coalesce, up to the action of the holomorphic automorphism group of $\mathbb{P}^1$. 

Note that such a differential
$\omega$ is multivalued and that $\omega = \zeta \omega$ for $\zeta$ any sixth root of unity.

\begin{proposition} The possible automorphism groups ${\rm Aut}^+(\mathcal{T})$ 
for some $\mathcal{T}\in M$ are precisely:
\begin{enumerate}
\item a cyclic group $C_n$ for $n\in \{1,2,3,4,5,6,7,8,9,10,11\}$,
\item a dihedral group $D_n$ for $n\in \{2,3,4,5,6,8,10,12\}$, or 
\item one of the three exceptional groups $T$, $O$, $I$, which are respectively
symmetries of the tetrahedron, octahedron, icosahedron.
\end{enumerate}
\end{proposition}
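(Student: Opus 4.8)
The plan is to combine the classification of finite subgroups of $\mathrm{SO}(3)$ with an orbit-counting argument on the twelve marked points. First I would observe that every orientation-preserving isometry of the flat cone sphere $(\proj^1,|\omega|^2)$ is conformal, hence a holomorphic automorphism of the underlying Riemann surface $\proj^1$, and therefore an element of $\mathrm{PSL}_2(\C)$; conversely any $g\in\mathrm{PSL}_2(\C)$ preserving the divisor $D=\sum\kappa_i p_i$ of marked points (with multiplicities $\kappa_i\le 5$) pulls $\omega$ back to a scalar multiple of itself and so acts isometrically. Thus ${\rm Aut}^+(\mathcal{T})$ is exactly the $\mathrm{PSL}_2(\C)$-stabilizer of $D$. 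Since there are at least $\lceil 12/5\rceil=3$ distinct marked points, this group injects into the symmetric group on them and is finite, hence up to conjugacy a finite subgroup of $\mathrm{PSU}(2)=\mathrm{SO}(3)$. This immediately forces ${\rm Aut}^+(\mathcal{T})$ to be one of $C_n$, $D_n$, $T$, $O$, $I$, settling the ``only these types'' half.

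The parameters are then pinned down by the single constraint $\sum_{\text{orbits}}|{\rm orbit}|\cdot\kappa=12$, using that point-stabilizers in $\mathrm{SO}(3)$ are cyclic. In $C_n$ the only orbit sizes are $1$ (the two poles) and $n$; in $D_n$ they are $2$ (the pole pair), $n$ (points on a flip axis), and $2n$ (generic); in $T,O,I$ the small orbits have the familiar sizes $|G|/|C_k|$. For $C_n$ one needs $nS=12-a-b$ with $S\ge 1$ and pole multiplicities $a,b\le 5$, solvable precisely for $n\le 12$. For $D_n$ with $n>6$ the generic orbits are too large, so $12=2\kappa_P+nM$ forces $M=1$ and $n=12-2\kappa_P$ even, giving $n\in\{8,10,12\}$, while every $n\le 6$ occurs; the residues $12-n\in\{5,3,1\}$ being odd is exactly what rules out $D_7,D_9,D_{11}$.

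Next I would realize each surviving group as the \emph{full} automorphism group, choosing multiplicities and the latitude of free orbits generically to destroy any accidental enhancement. The exceptional cases are clean: twelve icosahedron vertices ($\kappa=1$) give $I$, six octahedron vertices ($\kappa=2$) give $O$, and four tetrahedron vertices ($\kappa=3$) give $T$. For $7\le n\le 11$ I place an $n$-orbit on a small circle together with asymmetrically marked poles (e.g. $a\ne b$, or one pole unmarked) to kill the pole-swapping involution, realizing $C_n$ exactly; for $n\in\{2,\dots,6\}$ and $n\in\{8,10,12\}$ the analogous equatorial-plus-polar configurations realize $D_n$.

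The one genuinely delicate point, and the main obstacle, is the exclusion of $C_{12}$, since the counting above \emph{does} permit $n=12$. There the unique admissible configuration is a single orbit of twelve points of multiplicity $1$ on a small circle $|z|=r$, and the holomorphic involution $z\mapsto r^2/z$ always preserves this set (it fixes $\pm r$ and reverses the cyclic order), so the automorphism group is forced up to $D_{12}$ rather than $C_{12}$. This is precisely why cyclic symmetry caps at $11$ while dihedral symmetry reaches $12$. Verifying that no value of $r$ avoids this enhancement, and dually that the configurations constructed above acquire no unexpected extra symmetry, is where the real work lies.
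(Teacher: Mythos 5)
Your argument follows the same route as the paper's proof: reduce to the finite subgroups of ${\rm PGL}_2(\C)$ acting on the $12$ marked points with multiplicities at most $5$, constrain $C_n$ and $D_n$ by orbit-size bookkeeping (including the parity obstruction that kills $D_7$, $D_9$, $D_{11}$ and the observation that a single free $C_{12}$-orbit automatically carries $D_{12}$ symmetry), and realize the surviving groups by explicit polar-plus-equatorial configurations and the platonic solids. The proposal is correct, and the one point you flag as remaining work---that the constructed configurations acquire no accidental extra symmetry---is left at the same level of detail in the paper itself.
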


\begin{proof} The three exceptional groups are realized by putting $p_1,\dots,p_{12}$
at the vertices of the tetrahedron, octahedron, and icosahedron respectively. Note that for $T$, three 
points coincide at each vertex, and for $O$, two points coincide at each vertex.

The remaining finite subgroups of ${\rm PGL}_2(\C)$ are $C_n$ and $D_n$. The orbits
of the $C_n$ action, acting by $x\mapsto \zeta_n x$, are the singleton poles
$\{0\}$, $\{\infty\}$ and free orbits of size $n$. Since no more than $5$ points can coalesce
at $0$ or $\infty$, there must be at least one free orbit and so $n\leq 12$. If $n=12$, then
the points $p_i$ lie in a single free orbit of $C_{12}$, in which case
$D_{12}$ symmetry is automatic. For $C_n$ with $n\leq 11$, we can always put some points
$p_i$ at the poles $0$, $\infty$ with non-equal weights less than $6$, and add a union of free orbits,
to get a flat cone sphere $\mathcal{T}\in M$ with cyclic $C_n$
symmetry. 

To realize $D_n$ symmetry with $n\in\{2,3,4,5,6\}$, we place $6-n$ points at each of $0$, $\infty$
and place the remaining $2n$ points on a free $D_n$ orbit. To realize $D_n$ symmetry with $n\in \{8,10,12\}$,
we place $\frac{12-n}{2}$ points at each of $0$, $\infty$ and put the remaining $n$ points 
along the equator in a $D_n$ orbit of size $n$. The other dihedral groups $D_n$ for
$n\in \{7,9,11,13,14,\dots\}$ are not realizable because either they contain $C_n$ with $n\geq 13$
or the weights at $0$, $\infty$ cannot be made equal for parity reasons.
\end{proof}

Not all automorphism groups of flat cone spheres in $M$ are realized as automorphism
groups of triangulations:

\begin{example}\label{c11} The cyclic group $C_{11}$ acts on $\mathbb{P}^1$ by rotations,
with a generator acting by $x\mapsto \zeta_{11}x$. There is a flat cone sphere 
with one point at $\infty$ and $11$ points on the equator, all of curvature $1$,
associated to the Lauricella differential $(1-x^{11})^{-1/6}\,dx$. This flat cone sphere
is not realized by an equilateral triangulation.
\end{example}

If $\omega$ is a Lauricella differential of the form $\eqref{eqn:lauricella}$, then the
integral of $\omega$ along any path is well-defined up to a sixth root of unity.
By the {\it periods} of $\omega$, we mean the integrals along paths between
singular points, considered as numbers in $\C/\langle \zeta_6 \rangle$.
It follows from \cite{thurston} that:

\begin{proposition}
Equilaterally triangulated flat cone spheres are equivalent to Lauricella
differentials of the form \eqref{eqn:lauricella}, all of whose periods lie in $\Z[\zeta_6]$.
\end{proposition}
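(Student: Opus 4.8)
The plan is to prove the asserted equivalence in both directions by means of the developing map of the flat structure, the key input being that the affine group $G:=\Z[\zeta_6]\rtimes\langle\zeta_6\rangle$ --- Eisenstein translations together with rotations by sixth roots of unity --- is exactly the orientation-preserving symmetry group of the standard triangulation $\mathsf{T}_0$ of $\C$ into unit equilateral triangles with vertex set $\Z[\zeta_6]$. The developing map of $\omega$ is defined up to post-composition by an element of $G$, and since $\Z[\zeta_6]$ is $G$-invariant, both the condition ``periods lie in $\Z[\zeta_6]$'' and the pulled-back triangulation below will be well-defined modulo this ambiguity.

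First I would treat the direction from triangulations to differentials. Given an equilateral triangulation $\mathcal{T}$, the flat structure $|\omega|^2$ has a developing map $D$ on the universal cover of $\proj^1\setminus\{p_i\}$ sending each triangle isometrically to a unit equilateral triangle. Choosing the image of one triangle to be a triangle of $\mathsf{T}_0$, every adjacent triangle is forced onto a triangle of $\mathsf{T}_0$ as well, since it is the unique unit equilateral triangle on the far side of the shared developed (lattice) edge; by connectedness all triangles develop into $\mathsf{T}_0$ and all vertices into $\Z[\zeta_6]$. As the cone points of $\mathcal{T}$ are vertices, the period from $p_i$ to $p_j$ equals $D(p_j)-D(p_i)\in\Z[\zeta_6]$, and modulo the residual $\langle\zeta_6\rangle$-ambiguity this is exactly the claim that all periods lie in $\Z[\zeta_6]$.

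For the converse, suppose $\omega$ has the form \eqref{eqn:lauricella} with all periods in $\Z[\zeta_6]$. The cone angle at $p_i$ is $(6-\kappa_i)\pi/3$, so the linear part of the local holonomy is the rotation $\zeta_6^{\kappa_i}\in\langle\zeta_6\rangle$; normalizing $D(p_1)=0$, the period hypothesis gives $D(p_i)\in\Z[\zeta_6]$ for every cone point. Writing the holonomy around $p_i$ as $z\mapsto\zeta_6^{\kappa_i}z+b_i$ with fixed point $D(p_i)$ yields $b_i=(1-\zeta_6^{\kappa_i})D(p_i)\in\Z[\zeta_6]$, so the entire holonomy representation lands in $G$. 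Hence $D$ is $G$-equivariant, and I would pull back $\mathsf{T}_0$ to a triangulation of the flat cone sphere by unit equilateral triangles: equivariance makes the pullback independent of the branch, and each cone point, developing to a lattice point $D(p_i)$, becomes a vertex around which exactly $6-\kappa_i$ triangles fan, matching the cone angle.

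The main obstacle is this converse direction, and within it two well-definedness points: that the pulled-back triangulation closes up consistently around each cone point, and that the cone points land on vertices of $\mathsf{T}_0$ rather than on edge-midpoints or face-centers. The crux is that the period condition places the developed image of each cone point \emph{directly} in $\Z[\zeta_6]$ --- the vertices being precisely the $6$-fold symmetric points of $\mathsf{T}_0$ --- rather than forcing one to recover this location from the translational part $b_i$, where for $\kappa_i\in\{2,3,4\}$ the non-unit $1-\zeta_6^{\kappa_i}$ would a priori permit an edge-midpoint or a face-center. Once $D(p_i)$ is known to be a vertex, the remaining check that the fan of $6-\kappa_i$ lattice triangles is glued up by the rotation $\zeta_6^{\kappa_i}$ is routine, and this is the step I expect to be essentially bookkeeping rather than the conceptual heart of the argument.
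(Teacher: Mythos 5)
Your proof is correct and takes essentially the same approach as the paper's: develop the triangulation into the standard unit triangulation of $\C$ for the forward direction, and recover the triangulation in the converse by pulling back that standard triangulation using the condition that the cone points develop onto $\Z[\zeta_6]$. Your explicit check that the holonomy lands in $\Z[\zeta_6]\rtimes\langle\zeta_6\rangle$ merely makes precise the well-definedness of the pullback that the paper leaves implicit.
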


\begin{proof}
If $\mathcal{T}$ is triangulated, we choose the Lauricella
differential $\omega$ so that the period map sends
any given triangle of $\mathcal{T}$ onto a triangle of the standard
triangulation of $\C$. (Recall that $\omega$ is a priori determined
only up to a complex number of norm one).
Then the developing map sends all singularities of $\mathcal{T}$
onto points of $\Z[\zeta_6]$, so the periods of $\omega$ will lie in $\Z[\zeta_6]$.

In the other direction, if all the periods of $\omega$ lie in $\Z[\zeta_6]$, 
then we recover the triangulation by declaring a point $p \in \mathcal{T}$ to be a 
vertex if the integral of $\omega$ from any singularity to $p$ lies in $\Z[\zeta_6]$, 
and a path from one vertex to another to be an edge if the integral of $\omega$ 
along that path takes values between 0 and 1, up to the indeterminacy of $\langle \zeta_6 \rangle$.
\end{proof}

\begin{proposition}\label{prop-triangles} Suppose that a flat cone sphere $\mathcal{T}$ admits an equilateral
triangulation. Then ${\rm Aut}^+(\mathcal{T})$ preserves the triangulation.\end{proposition}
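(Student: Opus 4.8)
The plan is to study the action of an oriented automorphism $g\in{\rm Aut}^+(\mathcal{T})$ on a Lauricella differential $\omega$ representing $\mathcal{T}$, and to show that $g$ scales $\omega$ by a sixth root of unity. Granting this, $g$ will carry the developing lattice $\Z[\zeta_6]$ to itself and hence preserve the triangulation, which by the preceding proposition is recovered from the periods landing in $\Z[\zeta_6]$.

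First I would fix $\omega$ of the form \eqref{eqn:lauricella} whose periods between singular points all lie in $\Z[\zeta_6]$, as furnished by the preceding proposition. An oriented isometry $g$ of $\mathcal{T}$ preserves orientation and the conformal class of the flat metric $|\omega|^2$, hence is a holomorphic automorphism of $\mathbb{P}^1$ with $|g^*\omega|=|\omega|$. Thus $g^*\omega/\omega$ is a meromorphic function on $\mathbb{P}^1$ of constant modulus $1$, so $g^*\omega=\mu\,\omega$ for a constant $\mu$ with $|\mu|=1$. Since ${\rm Aut}^+(\mathcal{T})$ is finite, $g$ has finite order $N$, and iterating $g^*\omega=\mu\omega$ gives $\mu^N=1$; in particular $\mu$ is a root of unity.

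Next I would pin down the field containing $\mu$. As an isometry, $g$ permutes the cone points $s_1,\dots,s_m$ (of which there are at least three). Choosing a shortest geodesic between two distinct cone points produces a path $\beta$ meeting no cone point in its interior, whose period $v_0:=\int_\beta\omega\in\Z[\zeta_6]$ is nonzero, since it develops to a nondegenerate straight segment whose modulus is the geodesic length. The image $g\beta$ joins the cone points $gs_i,gs_j$, so its period again lies in $\Z[\zeta_6]$, and the change of variables $\int_{g\beta}\omega=\int_\beta g^*\omega=\mu v_0$ shows $\mu v_0\in\Z[\zeta_6]$. Hence $\mu=(\mu v_0)/v_0\in\Q(\zeta_6)=\Q(\sqrt{-3})$. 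Combined with the previous step, $\mu$ is a root of unity lying in $\Q(\sqrt{-3})$; as the only such roots of unity are the sixth roots, $\mu\in\langle\zeta_6\rangle$.

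Finally I would deduce that $g$ preserves the triangulation. A point $p$ is a vertex exactly when $\int_{s_0}^{p}\omega\in\Z[\zeta_6]$ for a fixed base cone point $s_0$. Writing a path from $s_0$ to $g(p)$ as a path from $s_0$ to $gs_0$ followed by the $g$-image of a path from $s_0$ to $p$, the corresponding period is a period between cone points plus $\mu\int_{s_0}^p\omega$; since $\mu\in\langle\zeta_6\rangle$ preserves $\Z[\zeta_6]$, this again lies in $\Z[\zeta_6]$, so $g$ sends vertices to vertices. Because $g$ scales the developing chart by $\mu\in\langle\zeta_6\rangle$, it also preserves edge lengths and the adjacency of the lattice $\Z[\zeta_6]$, hence carries edges to edges and triangles to triangles. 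The main obstacle is the step forcing $\mu\in\langle\zeta_6\rangle$: finiteness of the automorphism group alone only yields that $\mu$ is \emph{some} root of unity, a priori allowing $\mu=\pm i$, and it is precisely the Eisenstein arithmetic---that $\pm i\notin\Q(\sqrt{-3})$---that rules out a four-fold symmetry of the developing lattice. Some care is also needed with the $\langle\zeta_6\rangle$-indeterminacy of the multivalued periods, which I would handle by transporting branches via $g^*$ throughout.
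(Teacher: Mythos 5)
Your proposal is correct and follows essentially the same route as the paper: identify $g^*\omega=\mu\omega$ for a root of unity $\mu$ (using finiteness of the order of $g$), use a nonzero period in $\Z[\zeta_6]$ to force $\mu\in\Q(\zeta_6)$, and conclude $\mu$ is a sixth root of unity since those are the only roots of unity in that field. You supply somewhat more detail than the paper on the existence of a nonzero period and on the final deduction that a sixth-root scaling preserves the triangulation, but the key ideas are identical.
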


\begin{proof}
Let $\omega = c\prod_i (x-p_i)^{-1/6}\,dx$ be the Lauricella differential associated to
the triangulation. Oriented automorphisms of $\mathcal{T}$ are exactly given by $g\in {\rm PGL}_2(\C)$ for which 
$g^*\omega \in \C\omega$ or equivalently $g^*\{p_i\}=\{p_i\}$.
Since $g$ has finite order, we necessarily have 
$g^*\omega = \zeta\omega$ for some root of unity $\zeta$, well-defined up to a sixth root 
of unity. When $\mathcal{T}$ is triangulated, then
$g$ preserves the triangulation iff $\zeta = 1$
(up to a sixth root of unity).

Let $\gamma$ be any path between singularities of $\mathcal{T}$ with nonzero period. 
Since
$$\int_{g^*\gamma} \omega = \int_\gamma g^* \omega = \zeta \int_\gamma \omega$$
and all periods of $\omega$ are in $\Z[\zeta_6]$, we see that $\zeta \in \mathbb{Q}[\zeta_6]$. But the only roots of unity in $\Q[\zeta_6]$ are sixth roots of unity, so $g$ must preserve the Lauricella differential, and hence the triangulation.
 \end{proof}

If $\mathcal{T}$ is a flat cone sphere, and $G \subset \mathrm{Aut}^+(\mathcal{T})$,
then the quotient can also be given the structure of a flat cone sphere, where the cone
angle at a point in the quotient is the cone angle upstairs divided by the size of its stabilizer.
However, we will also find it useful to consider the quotient as an orbifold. To that end,
we define:
\begin{definition}
A $\emph{flat cone orbifold}$ is a flat metric on $S^2$ with $k$ conical
singularities of cone angle $2\pi - \alpha_i$ and specified ``ramification orders" $n_i$
subject to the condition that $n_i(2\pi - \alpha_i) \in (0, 2\pi]$ and $\alpha_i > 0$.
\end{definition}
A local orbifold chart near such a cone point is given by an $n_i$-to-$1$ branched
cover from a cone of angle $n_i(2\pi - \alpha_i)$, which is a local
isometry away from the branch point.

\begin{definition} An {\it orbitriangulation} of a flat cone orbifold is a system of compatible
equilateral triangulations in every possible orbifold chart.
\end{definition}

For the orbifold $\mathcal{T}/G$, an orbitriangulation of $\mathcal{T}/G$ is the same
thing as a $G$-invariant triangulation of $\mathcal{T}$. By Proposition \ref{prop-triangles},
every triangulation of $\mathcal{T}$ is $G$-invariant, so in particular we have

\begin{corollary}\label{orbi-cor} The quotient of a triangulated flat cone sphere
$\mathcal{T}\in M$ by a subgroup of its
automorphism group ${\rm Aut}^+(\mathcal{T})$ is an orbitriangulated flat cone sphere in
$M$.
\end{corollary}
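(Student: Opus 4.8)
The plan is to read off the result from Proposition \ref{prop-triangles} together with the identification, recorded just before the statement, between $G$-invariant triangulations of $\mathcal{T}$ and orbitriangulations of the quotient $\mathcal{T}/G$. Fix the triangulation of $\mathcal{T}\in M$ and a subgroup $G\subset {\rm Aut}^+(\mathcal{T})$. Since $\mathcal{T}$ is triangulated, Proposition \ref{prop-triangles} applies to each element of ${\rm Aut}^+(\mathcal{T})$, and in particular to each element of $G$, showing that every such automorphism carries the triangulation to itself. Hence the equilateral triangulation of $\mathcal{T}$ is $G$-invariant, and by the stated equivalence it is exactly the data of an orbitriangulation of $\mathcal{T}/G$: in each orbifold chart, which unfolds the local picture at a point of stabilizer order $n$ to the cone of angle $n$ times the quotient cone angle, the invariant triangulation upstairs restricts to a compatible equilateral triangulation. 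It then remains only to check that $\mathcal{T}/G$ is a flat cone sphere lying in $M$.

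For that I would argue as follows. Topologically $\mathcal{T}\simeq \proj^1$ and $G$ is finite, so $\mathcal{T}/G\simeq \proj^1$ is again a sphere. For the cone angles, note that at the image of a point $p$ with upstairs cone angle $\theta_p$ and stabilizer of order $n$, the quotient cone angle is $\theta_p/n$. The orientation-preserving stabilizer of a point of a $G$-invariant triangulation is cyclic of order dividing its local order---the valence at a triangulation vertex, $2$ at an edge midpoint, $3$ at a face barycenter, and $1$ at a generic point---while $\theta_p$ is in every case an integer multiple of $\pi/3$ lying in $(0,2\pi]$. Consequently $n$ divides $\theta_p/(\pi/3)$, so the quotient cone angle $\theta_p/n=(6-\kappa')\pi/3$ for an integer curvature $\kappa'$, which lies in $\{1,\dots,5\}$ at each genuine cone point. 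Gauss--Bonnet on the quotient sphere forces these curvatures to sum to $12$, placing $\mathcal{T}/G$ in $M$.

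The one substantive point---and the reason the orbifold language is indispensable---is the interplay in the previous paragraph. The metric quotient $\mathcal{T}/G$ is generally \emph{not} honestly equilaterally triangulated: triangles adjacent to a point with nontrivial stabilizer can be folded onto themselves, so the triangulation need not descend to an honest triangulation at such a branch point. What saves the statement is that the orbifold chart at such a point is precisely the unfolded cone of angle $\theta_p$ upstairs, which does carry the ambient equilateral triangulation; this is what reconciles ``orbitriangulated'' with ``flat cone sphere in $M$.'' Verifying the divisibility of $\theta_p/(\pi/3)$ by $n$ that keeps the quotient curvatures integral and in range is therefore the crux, and everything else is a direct application of Proposition \ref{prop-triangles} and the preceding discussion.
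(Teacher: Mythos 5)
Your proof is correct and follows the same route as the paper: the paper derives the corollary immediately from Proposition \ref{prop-triangles} together with the identification, stated just before the corollary, of orbitriangulations of $\mathcal{T}/G$ with $G$-invariant triangulations of $\mathcal{T}$. The extra verification you supply---that the stabilizer order divides $\theta_p/(\pi/3)$ so the quotient cone angles stay integral multiples of $\pi/3$ and the quotient lands in $M$---is left implicit in the paper (it reappears in the later conditions $0<\mu_i<6$, $n_i(6-\mu_i)\leq 6$ on bistrata), and is a worthwhile addition rather than a deviation.
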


\begin{figure}
     \centering
     \begin{subfigure}{0.49\textwidth}
         \centering
         \includegraphics[scale=2]{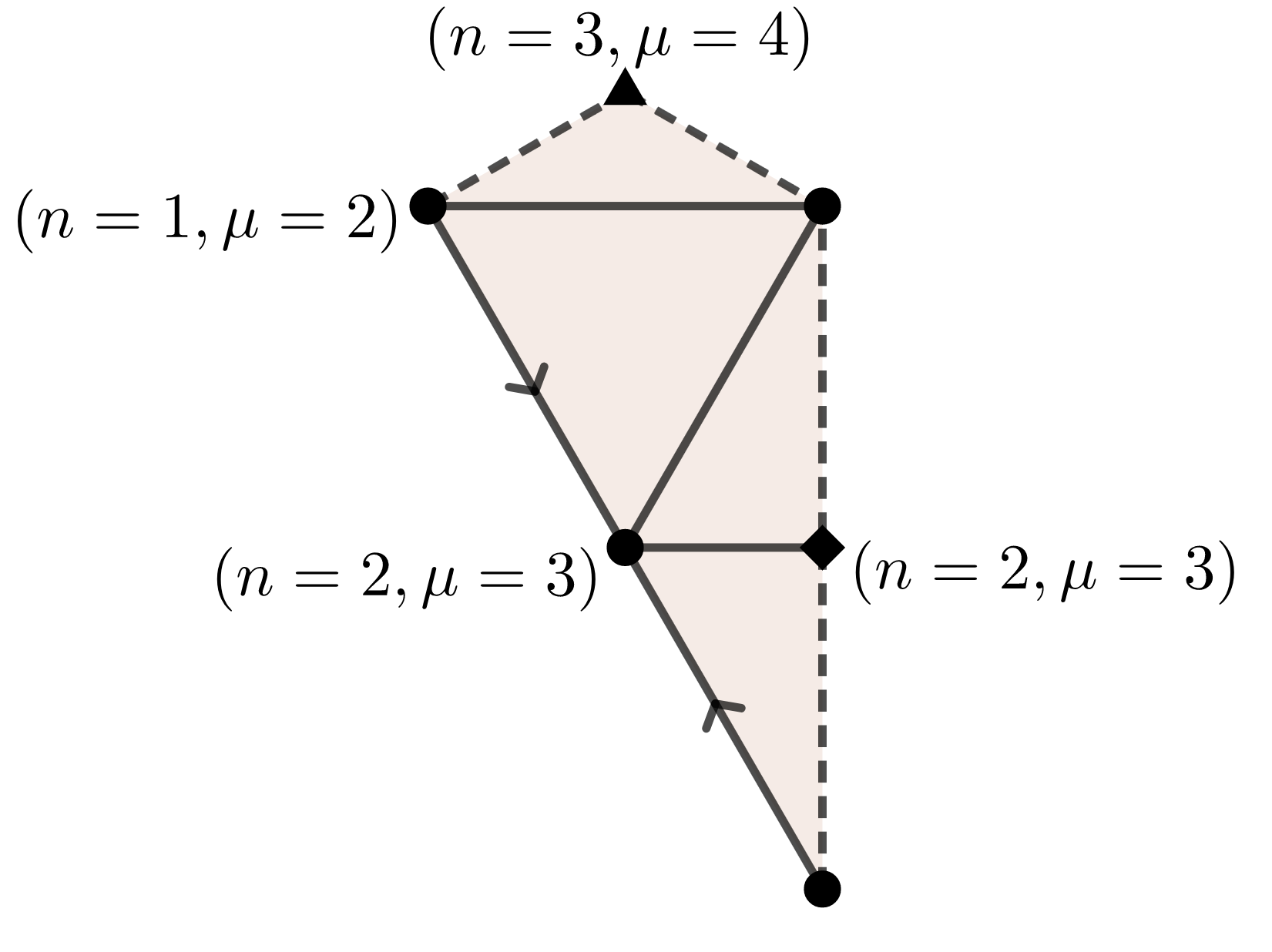}
     \end{subfigure}
     \begin{subfigure}{0.49\textwidth}
         \centering
         \includegraphics[scale=2.1]{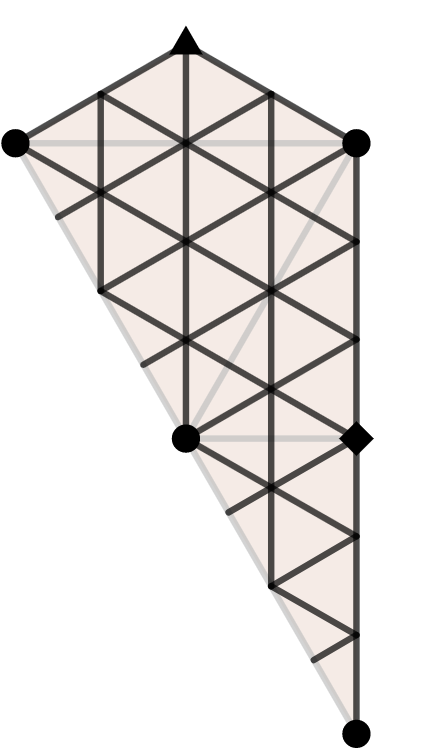}
     \end{subfigure}
        \caption{An orbitriangulation (left) and a triangulation of the
        underlying flat cone sphere.}
        \label{triangulated orbifold}
\end{figure}

%\begin{figure}
%    \centering
%    \begin{minipage}{0.45\textwidth}
%        \centering
%       \includegraphics[scale=0.25]{Triangulated_orbifold}
%        \caption{} \label{triangulated orbifold}
%    \end{minipage}\hfill
%    \begin{minipage}{0.45\textwidth}
%        \centering
%       \includegraphics[scale=0.2]{subdivided_triangle} 
%        \caption{} \label{subdivided_triangle}
%    \end{minipage}
%\end{figure}

\begin{example} Figure \ref{triangulated orbifold} (left) shows an orbitriangulation.
The points are labeled with their ramification orders $n$ and their
curvature $\mu$, which is related to the cone angle deficit $\alpha$ by
$\alpha/(2\pi) = \mu/6$. The unlabeled vertices are identified with the $(n=1, \mu=2)$
vertex by the gluing, which identifies the dotted lines in pairs and the solid lines
as marked. As required, we have $n(6-\mu) \leq 6$ at each point. The triangle and
diamond indicate points which are orbipoints but not vertices of the orbitriangulation.

Since the orbifold indices are $(3,2,2)$, this is equal to
$\mathbb{P}^1/D_3$ as an orbifold. The corresponding $D_3$-invariant triangulation
upstairs has $6$ singular points $p_i$ each of curvature $\kappa_i=2$, the preimages
of the $(n=1, \mu=2)$ point. In this example, none of the orbifold points are singular upstairs.
\end{example}

\begin{remark} A flat cone orbifold is not necessarily the quotient of a manifold
(i.e. a \emph{good orbifold}). For example, in the same picture of Figure
\ref{triangulated orbifold}, if we had declared the vertex between the two
arrows to be $(n=1, \mu = 3)$, this would be an orbitriangulated flat cone
orbifold that is not good. By the inequality $n_i(2\pi - \alpha_i) \leq 2\pi$, every good flat cone
orbifold is the quotient either of the plane, if $n_i(2\pi - \alpha_i) = 2\pi$ for all $i$,
or of a flat cone sphere.
\end{remark}

Note that an orbitriangulated flat cone sphere also admits a triangulation, but by triangles of side length
$\tfrac{1}{2\sqrt{3}}$ rather than $1$. See Figure \ref{triangulated orbifold} (right).

\begin{remark} As we can see from Example \ref{c11}, if $\mathcal{T}$ is not triangulated,
then its quotient by its automorphism
group may not even lie in $M$. For example, the quotient by the
$C_{11}$ action on the flat cone sphere $(1-x^{11})^{-1/6}\,dx$
lies in $\mathcal{M}_{61\pi/33\,,\,20\pi/11\,,\,\pi/3}$.\end{remark}

\begin{proposition}\label{poset-prop} The poset diagram of automorphism groups of convex triangulations is: 
 \begin{figure}[H]\label{aut-poset} \begin{tikzcd}
 & & C_1 \arrow[dash]{dl} \arrow[dash]{d} \arrow[dash]{ddr} &  \\
& C_2 \arrow[dash]{d} \arrow[dash]{dr} & C_3 \arrow[dash]{d} \arrow[dash]{ddl} & \\
 & D_2 \arrow[dash]{dl} \arrow[dash]{d} \arrow[dash]{dr} & D_3\arrow[dash]{d} \arrow[dash]{ddl} \arrow[dash]{ddr} & D_5  \arrow[dash]{dd} \\ 
  D_4  \arrow[dash]{dr} & T \arrow[dash]{d} \arrow[dash]{drr}  & D_6 &  \\
   & O  & & I
\end{tikzcd}
\end{figure}

\end{proposition}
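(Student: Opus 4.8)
The plan is to prove the proposition in two stages: first pin down the vertex set (which groups occur as ${\rm Aut}^+(\mathcal{T})$ for convex triangulations), then determine the covering relations. For the vertices I start from the list of finite subgroups of ${\rm PGL}_2(\C)$ realizable as ${\rm Aut}^+$ of a flat cone sphere in $M$ (the groups $C_n$, $D_n$, $T$, $O$, $I$ identified above) and cut it down using the triangulability constraint. The key local observation is that, by Proposition \ref{prop-triangles}, every oriented automorphism preserves the equilateral triangulation; hence an order-$n$ rotation $g$ fixing a point $p\in\mathbb{P}^1$ acts on the link of $p$ by a rotation of the triangulation, so $n$ divides the valence of $p$ when $p$ is a vertex, or $n\in\{1,3\}$ (resp.\ $n\in\{1,2\}$) when $p$ is a face center (resp.\ edge midpoint). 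Since valences are at most $6$, this forces $n\le 6$, which already eliminates $C_n$ for $n\ge 7$ and therefore $D_8,D_{10},D_{12}$, as these contain $C_8,C_{10},C_{12}$. Moreover for $n\in\{4,5,6\}$ the two fixed points of the rotation must both be vertices of valence exactly $n$, hence of equal curvature $6-n$.

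The heart of the argument is to show that $C_4,C_5,C_6$ never occur as the \emph{full} group. Normalize the $C_n$-action as $x\mapsto\zeta_n x$ with fixed points $0,\infty$, each carrying curvature $6-n$. Then the remaining $12-2(6-n)=2n$ curvature units lie in free $C_n$-orbits of size $n$, so the orbit curvatures sum to exactly $2$: either two orbits of curvature $1$ at radii $r,s$, or a single orbit of curvature $2$ at radius $r$. In either case the involution $\iota\colon x\mapsto t/x$, with $t=rs$ (resp.\ $t=r^2$), permutes the configuration $\{p_i\}$ preserving multiplicities and exchanges the two (equal-curvature) poles. Since $\iota$ preserves $\{p_i\}$ we have $\iota^*\omega=\zeta\,\omega$ for a root of unity $\zeta$, and because all periods lie in $\Z[\zeta_6]$ the argument of Proposition \ref{prop-triangles} forces $\zeta\in\langle\zeta_6\rangle$, so $\iota\in{\rm Aut}^+(\mathcal{T})$. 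As $\iota$ does not fix $0,\infty$ it lies outside $C_n$, whence $\langle C_n,\iota\rangle=D_n$ and the symmetry is strictly larger. Thus the realized cyclic groups are exactly $C_1,C_2,C_3$; for $n\le 3$ the forced equality of pole curvatures disappears and one can assign unequal admissible curvatures to $0,\infty$, breaking $\iota$. Combined with explicit realizations—the tetra-, octa-, and icosahedral triangulations for $T,O,I$, and suitable (anti)prismatic or bipyramidal triangulations with $\Z[\zeta_6]$-periods for $C_2,C_3,D_2,\dots,D_6$—this establishes the eleven vertices.

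For the covering relations I order the strata $M_G\subset M$ by specialization, declaring $G\preceq H$ when $M_H\subset\overline{M_G}$, which for the automorphism stratification is equivalent to $G$ being subconjugate to $H$. I would compute $\dim_\C M_G$ for each realized $G$ by counting the moduli of free orbits modulo the normalizer of $G$ in ${\rm PGL}_2$, obtaining $9,5,3$ for $C_1,C_2,C_3$ and descending to $0$ for $O,I$, and then read off the covers from the induced subgroup lattice on the eleven realized groups. Here it is essential to account for the \emph{empty} strata $C_4,C_5,C_6$ and $C_{\ge 7}$: their absence is exactly what governs the predecessors of $D_4,D_5,D_6$ in the diagram. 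Maximality of $D_6,O,I$ follows because no realized group properly contains them—in particular $D_6\not\subset O,I$, since the only order-$12$ subgroup of $S_4$ is $A_4$ and $A_5$ has no element of order $6$. Confirming that each remaining inclusion is witnessed by a genuine specialization, with no intervening realized stratum, is then a finite check.

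The main obstacle is the swap argument of the second paragraph: one must verify both that the fixed-point curvatures are forced equal—so that $\iota$ can exchange the poles—and that the free-orbit curvatures always sum to exactly $2$, leaving no configuration that $\iota$ fails to preserve. A secondary difficulty is exhibiting \emph{triangulable} (not merely flat-cone-sphere) representatives with full symmetry $C_2,C_3,D_2,\dots,D_6$: this requires checking the $\Z[\zeta_6]$-period condition for the explicit constructions and ruling out any accidental extra symmetry that would push the stratum deeper in the poset.
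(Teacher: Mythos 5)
Your proposal is correct and follows essentially the same route as the paper: bound the order of any automorphism by $6$ via its fixed points, exhibit realizations of the eleven groups, and rule out $C_4,C_5,C_6$ as full automorphism groups by producing an involution $x\mapsto t/x$ swapping the two equal-curvature poles and preserving the free orbits. You carry out in detail the step the paper states tersely (that the curvature bookkeeping forces the involution to preserve the configuration, and that Proposition \ref{prop-triangles} then promotes it to an automorphism of the triangulation); just note that $t$ should be the product of actual orbit representatives in $\C^*$ rather than literal radii.
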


\begin{proof} We first observe that any element $g\in {\rm Aut}^+(\mathcal{T})$ has order at most $6$.
This is because any $g\in {\rm PGL}_2(\C)$ fixes some point $p$. Assuming $g\neq 1$, 
either $p$ is the center of a triangular face and
${\rm ord}(g)= 3$, or $p$ is the center of an edge and ${\rm ord}(g)=2$, or $p$ is a vertex
and ${\rm ord}(g)$ divides ${\rm val}(p)\leq 6.$ 

Conversely, we can easily construct convex triangulations on which $C_n$ and $D_n$ act
for $n\leq 6$, for instance $n$-gonal bipyramids, and there are triangulated platonic solids with
automorphism groups $T$, $O$, $I$.

But, note that $C_4$, $C_5$, $C_6$ do not appear in the diagram. The reason is the following:
Any cyclic symmetry of order $4$, $5$, $6$ must fix a vertex of the triangulation
of valence $4$, $5$, $6$, at both of the poles of the cyclic rotation. There are either
one or two remaining free $C_4$, $C_5$, $C_6$ orbits.
Thus $C_4$, $C_5$, $C_6$ symmetry always implies $D_4$, $D_5$, $D_6$
symmetry---there is always an involution switching the two poles and the remaining
free orbit(s). 
\end{proof}

We now analyze how automorphism groups and curvatures interact.

\begin{definition} The {\it bistratification} of $M=\mathbb{P}\Gamma\backslash \mathbb{CH}^9$
is the stratification by conjugacy classes of inertia groups ${\rm Stab}_{\mathbb{P}\Gamma}(p)$ where $p\in \mathbb{CH}^9$. 
\end{definition}

\begin{proposition}\label{inertia-strata} The bistrata are the
intersections of automorphism strata and curvature strata. \end{proposition}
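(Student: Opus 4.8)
The plan is to analyze the inertia group $\mathrm{Stab}_{\mathbb{P}\Gamma}(p)$ explicitly and show that its $\mathbb{P}\Gamma$-conjugacy class both determines and is determined by the pair consisting of the oriented automorphism group $G=\mathrm{Aut}^+(\mathcal{T}_p)$ and the curvature profile of $\mathcal{T}_p$. Since the automorphism (resp.\ curvature) stratum of $p$ records exactly $G$ (resp.\ the profile), this equivalence is precisely the statement that the bistratification is the common refinement of the two stratifications, i.e.\ that bistrata are intersections. So I would prove the two inclusions of partitions separately.

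First I would pin down the structure of the inertia group. Using Thurston's local isometry $M\cong\mathbb{P}\Gamma\backslash\mathbb{CH}^9$ and the period description via the Lauricella differential \eqref{eqn:lauricella}, the stabilizer of a point in the open stratum is identified with $\mathrm{Aut}^+(\mathcal{T})$ acting on the $\zeta_6$-eigenspace of the cohomology of the associated cyclic cover. On deeper curvature strata, where cone points have coalesced, the completion $\overline{\mathcal{M}}$ carries local monodromy: around the divisor parametrizing a collision of cone points whose curvatures sum to $\kappa$, the differential picks up a root of unity, producing a complex reflection whose order is determined by $\kappa$. Thus $\mathrm{Stab}_{\mathbb{P}\Gamma}(p)$ is an extension of $G$ by the normal subgroup $N_p$ generated by these local reflections, and I would compute the reflection orders to show that $N_p$ records exactly which collisions occur; since all curvatures sum to $12$, this pins down the full profile.

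Granting this, one inclusion is immediate: conjugate inertia groups have isomorphic quotients by $N_p$, giving the same $G$, and contain the same multiset of local reflections, giving the same curvature profile, so each bistratum lies in a single intersection $M_G\cap\mathcal{M}_\kappa$. The reverse inclusion---that the inertia conjugacy class is constant on each intersection---is where I expect the real difficulty. A priori the conjugacy class could depend on finer data than $(G,\kappa)$, namely on how $G$ sits relative to the cone points: the eigenvalues of an element $g\in G$ on $H^1$ depend on which cone points $g$ fixes and with what local rotation. The crux is that for a point carrying a triangulation this finer data is forced by $(G,\kappa)$: by Proposition \ref{prop-triangles} every automorphism preserves the triangulation, so an element of order $n$ fixing a cone point must fix a vertex of valence $n$, hence of curvature $6-n$; parity and valence constraints of this type determine the fixed-point configuration, hence all local rotation numbers, hence the inertia representation up to conjugacy. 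I would carry this out case by case along the automorphism poset of Proposition \ref{poset-prop}.

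The main obstacle is thus this last step: showing that triangulability rigidifies the fixed-point data enough that $(G,\kappa)$ determines the inertia representation. I would stress that this is exactly where Proposition \ref{prop-triangles} is indispensable. For a non-triangulated flat cone sphere the same $(G,\kappa)$ can be realized with genuinely different fixed-point behavior---for instance a $C_2$ whose poles are cone points versus regular points, which one checks produce non-conjugate involutions on $H^1$---so the argument cannot be made on $M$ abstractly and must invoke the equilateral structure.
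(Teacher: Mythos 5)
Your first paragraph and the ``immediate'' inclusion reproduce essentially all of the paper's proof: the paper simply cites Thurston for the extension
$$1\to \textstyle\prod_i B_{\kappa_i}\to {\rm Stab}_{\mathbb{P}\Gamma}(\C v)\to {\rm Aut}^+(\mathcal{T})\to 1,$$
with kernel generated by the images of the local braid groups of the coalesced cone points, and concludes that the stabilizer jumps exactly when singularities further collide or the automorphism group grows. Up to that point you are on the paper's track.

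The step you single out as the crux, however, rests on a false claim: it is not true that for triangulable strata the pair $(G,\kappa)$ determines how $G$ sits relative to the cone points, nor the inertia group up to conjugacy. Take $G=C_2$ and upstairs profile $\kappa=\{2,2,1^8\}$. Table \ref{bistrata} contains two triangulable bistrata realizing this pair: $(C_2,\{4,4,1^4\})$, where the two curvature-$2$ points are the two fixed points of the involution, and $(C_2,\{3^{\bf e},3^{\bf e},2,1^4\})$, where they form a free orbit and the poles of the rotation are smooth. These loci have complex dimensions $3$ and $4$ respectively, so their inertia groups cannot be conjugate (conjugate subgroups have fixed loci of equal dimension), and your proposed case-by-case verification along the poset of Proposition \ref{poset-prop} breaks down here. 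The datum that does separate bistrata is exactly the decorated quotient profile $\{(n_i,\mu_i)\}$ by which the paper indexes them immediately after this proposition; so the statement should be read, and is only proved, in the weaker sense that the inertia group records both the collision data and the symmetry data---i.e.\ the bistratification is a common refinement of the two stratifications---not in the sense that each set-theoretic intersection of an automorphism stratum with a curvature stratum is a single bistratum, which is what your second inclusion attempts to establish.
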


\begin{proof} The local stabilizer of some $[\mathcal{T}]\in M$ corresponding to
$\C v\in \mathbb{CH}^9$ has a canonical extension structure (see \cite{thurston}, discussion after Theorem 4.1)
$$1\to \textstyle \prod_i B_{\kappa_i}
\to {\rm Stab}_{\mathbb{P}\Gamma}(\C v) \to {\rm Aut}^+(\mathcal{T})\to 1$$
where $B_{\kappa_i}$ is the image in ${\rm U}(\Lambda)$ of the local braid group of the $\kappa_i$ colliding singularities.
Thus, ${\rm Stab}_{\mathbb{P}\Gamma}(p)$ jumps in size exactly when
(1) singularities further collide or (2) the size of the automorphism group increases.\end{proof}

\begin{definition} A bistratum is {\it triangulable}
if it contains a triangulation. \end{definition}

We henceforth consider only triangulable bistrata, unless otherwise stated.

%Doesn't this indexing only work for triangulable bistrata? E.g. the cuboctahedron bistratum does not correspond to a list of pairs. Peter
A bistratum of $M$ is specified by the list of pairs $\{(n_i, \mu_i)\}$
of ramification orders and curvatures, at each of the singularities of the quotient 
flat cone orbifold.
Such a list determines a nonempty bistratum if and only if
\begin{enumerate}
\item The multiset of ramification orders $\{n_i\}$ not equal to 1
are those of a good orbifold $\mathbb{P}^1/G$,
for one of the groups $G$ in Proposition \ref{poset-prop},
\item for each $i$, $0<\mu_i <6 $ and $n_i(6 - \mu_i) \leq 6$, and 
\item $\sum_i \mu_i = 12$.
\end{enumerate}

The automorphism group $G$ of the corresponding bistratum is determined by
the $\{n_i\}$. The curvature profile $\{\kappa_j\}$ of the flat cone sphere upstairs is determined as follows:
Each singular point $p_i$ of the quotient orbifold with indices $(n_i, \mu_i)$ satisfying 
 $n_i(6-\mu_i)<6$ corresponds to $|G|/n_i$ singular points $\{q_j\}$ upstairs,
 each with curvature $\kappa_j$ given by the formula
 $$6-\kappa_j = n_i(6-\mu_i).$$ The singular points of the
 quotient orbifold with $n_i(6 - \mu_i) = 6$ are non-singular upstairs.

\begin{notation} 
We introduce a new notation for the bistratum $\{(n_i, \mu_i)\}$.
First, instead of listing the ramification orders $\{n_i\}$, we write the
corresponding group $G$. Second, we order the $\mu_i$ so that
they correspond to the ramification orders of $\mathbb{P}^1/G$ in descending
order. Third, we add a superscript
$\bf e$ to the index $\mu_i = 3$ whenever $n_i = 2$, and we add a superscript
$\bf f$ to the index $\mu_i = 4$ whenever $n_i = 3$.

So, for example, the bistratum
$\{(3,4), (2,3), (2,3), (1,2)\}$ of the orbifold depicted in Figure
\ref{triangulated orbifold} is written
$(D_3, \{4^{\bf f}, 3^{\bf e}, 3^{\bf e}, 2\})$. We will call the list
$\{\mu_i\}$ with superscripts a \emph{decorated} curvature profile.
\end{notation}

The superscript $\bf f$
stands for \emph{face}, because a singular point of an orbifold with $\mu_i = 4$ and
$n_i = 3$ may lie in the middle of a face of the orbitriangulation. The superscript
$\bf e$ stands for \emph{edge}, because a point of the orbifold with $\mu_i = 3$ and 
$n_i = 2$ may lie in the middle of an edge of the orbitriangulation.
Of course, both points may also lie at a vertex.

On the other hand, a singular point without a 
superscript must lie at a vertex. This is because first, the only
possible non-vertex points in a triangulation, with non-trivial stabilizers under an automorphism 
 group, are the center of an edge and the center of a face, 
 with stabilizers of size $2$ and $3$ respectively. So $n$ must be $2$ or $3$. Second, if $n(6-\mu) \neq 6$, the point upstairs has positive curvature 
 and hence must be a vertex.

\begin{remark}  \label{justify}
The justification for this notation is that
 the flat cone spheres underlying an orbitriangulation
in a given stratum of flat cone orbifolds depend
only on the decorated curvature profile---that is, which $\mu_i$
are decorated with an ${\bf f}$ or ${\bf e}$,
see Proposition \ref{curve-sublattice} for a precise statement.

The strata of flat cone orbifolds do depend,
in a weak way, on the group $G$. This is because $G$ dictates which
points of positive curvature are declared orbipoints. For example,
the strata $(C_2,\{4,4,4\})$ and $(C_1,\{4,4,4\})$ of flat cone orbifolds
have the same decorated curvature profile, so the underlying
flat cone spheres parameterizing orbitriangulations are the same.
But in $(C_2,\{4,4,4\})$, the first two points of positive curvature
are distinguished from the third, being orbipoints of order $2$,
whereas in $(C_1,\{4,4,4\})$, all three points are on equal footing.
\end{remark} 

\begin{definition} The {\it undecorated profile} $\mu^u$ of a quotient profile $\mu$ is the result 
  of deleting the decorations ${\bf f}$, ${\bf e}$. \end{definition}
  
  Undecorating corresponds to forgetting the orbifold structure, i.e.~taking the coarse space or
  resetting $n_i=1$. The orbitriangulations with curvature profile $\mu^u$ are simply triangulations
  in the curvature stratum $\mu^u$. 
  
  \begin{proposition}\label{curve-sublattice} Let $\kappa$ be an (undecorated) curvature 
  profile. There is a Hermitian Eisenstein sublattice $\Lambda_\kappa\subset \Lambda$ of signature
  $(1,{\rm len}(\kappa)-3)$ and a subgroup $\Gamma_\kappa\subset {\rm U}(\Lambda_\kappa)$
  for which $\Gamma_\kappa\backslash \Lambda_\kappa^+$ is in bijection with convex triangulations
  of the sphere, whose curvature profile is a collision of $\kappa$.
  
  Now let $(G,\mu)$ denote a group and
   decorated curvature profile and let $\mu^u$ be the undecoration of $\mu$. There is an Eisenstein
  module $$\Lambda_{\mu^u}\subset \Lambda_\mu\subset \tfrac{1}{2(1+\zeta_6)}\Lambda_{\mu^u}$$ 
  of signature $(1,{\rm len}(\mu)-3)$ and a subgroup 
  $\Gamma_{G,\mu}\subset {\rm U}(\Lambda_\mu)$ for which
   $\Gamma_{G,\mu}\backslash \Lambda_\mu^+$ is in bijection with
    orbitriangulations with covering group $G$ and profile a collision of $\mu$. The group $\Gamma_{G,\mu}$ is a finite
    index subgroup of $\Gamma_{\mu^u}$.  \end{proposition}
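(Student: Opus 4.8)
The plan is to derive both statements from Thurston's construction (Theorem \ref{thm:Thurston}), applied not to twelve unit-weight points but to fewer, heavier points, and then to locate the resulting Eisenstein lattices inside $\Lambda$. For the first half, given an undecorated profile $\kappa=\{\kappa_1,\dots,\kappa_m\}$ I would treat the weights $\kappa_i/6\in(0,1)$ with $\sum\kappa_i/6=2$ as a Deligne--Mostow/Thurston weight system on $m$ marked points and rerun the construction behind Theorem \ref{thm:Thurston} verbatim with $m$ points in place of $12$. Since every exponent lies in $\tfrac16\Z$, the local monodromy about the $i$th point is the sixth root of unity $\zeta_6^{-\kappa_i}$, so the period module is a $\Z[\zeta_6]$-module $\Lambda_\kappa$; and as $\mathcal{M}_{\kappa_1\pi/3,\dots,\kappa_m\pi/3}$ has complex dimension $m-3$, the Hermitian form $\star$ has signature $(1,m-3)$. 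The metric completion $\overline{\mathcal{M}}_{\kappa_1\pi/3,\dots,\kappa_m\pi/3}$ supplies exactly the collisions of $\kappa$, and rerunning Thurston's argument yields a group $\Gamma_\kappa\subset {\rm U}(\Lambda_\kappa)$ --- the image of the framed monodromy together with the permutations of equal-curvature points --- for which $\Gamma_\kappa\backslash\Lambda_\kappa^+$ is in bijection with convex triangulations whose profile is a collision of $\kappa$.

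To realize $\Lambda_\kappa\subset\Lambda$, note that each triangulation counted above is, after forgetting that $\kappa_i$ of its twelve singularities coincide at the $i$th heavy point, an honest convex triangulation of $S^2$, hence carries a class in $\Lambda^+$ by Theorem \ref{thm:Thurston}. This realizes the closure of the $\kappa$-stratum as a totally geodesic sub-ball $\mathbb{CH}^{m-3}\subset\mathbb{CH}^9$, cut out by a non-degenerate Hermitian subspace $\C^{1,m-3}\subset\C^{1,9}$, and I would set $\Lambda_\kappa=\Lambda\cap\C^{1,m-3}$, a saturated $\Z[\zeta_6]$-submodule of rank $m-2$. Because the positive line of every triangulation in the stratum lies in this subspace, the restricted form $\star$ has signature $(1,m-3)$. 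The delicate point is that the period lattice produced intrinsically in the previous paragraph agrees with $\Lambda\cap\C^{1,m-3}$ on the nose --- i.e.\ that the cycles surviving the collision of points span exactly this submodule and carry the same Hermitian form. This is the Deligne--Mostow analysis of a boundary stratum, and it is the main obstacle in the first half.

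For the second half, undecorate $\mu$ to $\mu^u$ and apply the first half to obtain $\Lambda_{\mu^u}$. The crucial observation is that the orbifold $\mathbb{P}^1/G$ and its coarse space carry the \emph{same} flat metric and hence the same period local system: an orbifold point of data $(n_i,\mu_i)$ still has developing monodromy $\zeta_6^{-\mu_i}$, so $\Lambda_\mu$ lives in the very same Hermitian space $\C^{1,\,{\rm len}(\mu)-3}$ as $\Lambda_{\mu^u}$, and the ramification orders change only which periods count as \emph{integral}. I would define $\Lambda_\mu$ to be the module of periods of the orbifold Lauricella differential that are integral in every orbifold chart. Then $\Lambda_{\mu^u}\subset\Lambda_\mu$ because a coarse unit-triangle triangulation whose orbipoints lie at vertices is already an orbitriangulation, so every coarse integral period is an orbifold integral period. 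For the upper bound $\Lambda_\mu\subset\tfrac1{2(1+\zeta_6)}\Lambda_{\mu^u}$ I would develop the unit triangles of an orbitriangulation through the branched charts and show their vertices land in the $\tfrac1{2\sqrt3}$-refinement of the coarse triangular lattice: the order-$3$ face point ($\mathbf{f}$) forces the $\sqrt3$-subdivision realized by multiplication by $1+\zeta_6=\sqrt3\,\zeta_{12}$ (of index $3$), and the order-$2$ edge point ($\mathbf{e}$) forces edge-bisection (the factor $2$, of index $4$); combined, no period is denominated by more than $2(1+\zeta_6)$, which is precisely the side-$\tfrac1{2\sqrt3}$ triangulation of the Remark following Corollary \ref{orbi-cor}. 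Sandwiched between $\Lambda_{\mu^u}$ and its finite-index superlattice $\tfrac1{2(1+\zeta_6)}\Lambda_{\mu^u}$, the module $\Lambda_\mu$ is commensurable with $\Lambda_{\mu^u}$ and so inherits signature $(1,{\rm len}(\mu)-3)$.

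Finally I would set $\Gamma_{G,\mu}\subset{\rm U}(\Lambda_\mu)$ to be the image of the orbifold monodromy together with the permutations of orbipoints of identical decorated type, and rerun Theorem \ref{thm:Thurston} in the orbifold setting to obtain the bijection between $\Gamma_{G,\mu}\backslash\Lambda_\mu^+$ and orbitriangulations of profile a collision of $\mu$. These generators are geometric symmetries of the same marked configuration, so they descend to the coarse space and preserve $\Lambda_{\mu^u}$ as well as $\Lambda_\mu$; undecorating sends them into generators of $\Gamma_{\mu^u}$, giving $\Gamma_{G,\mu}\subset\Gamma_{\mu^u}$. The index is finite because $\Gamma_{\mu^u}$ exceeds $\Gamma_{G,\mu}$ only by the stabilizer condition for the commensurable lattice $\Lambda_\mu$ (finite index) and by the finitely many extra permutations mixing points that share an undecorated curvature but carry different decorations (such as $3$ versus $3^{\mathbf{e}}$). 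I expect the genuinely hard step to be the exact upper bound in the second half: pinning down that the common refinement of the order-$2$ and order-$3$ charts is exactly the $2(1+\zeta_6)$-scaled lattice, neither coarser nor finer, which is what ties the abstract module $\Lambda_\mu$ to the explicit Eisenstein number $2(1+\zeta_6)$.
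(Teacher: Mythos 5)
Your proposal is correct and follows essentially the same route as the paper: the decorations $4^{\mathbf f}$ and $3^{\mathbf e}$ weaken period integrality to $\tfrac{1}{1+\zeta_6}\Z[\zeta_6]$ (face centers) and $\tfrac12\Z[\zeta_6]$ (edge centers), producing exactly the sandwich $\Lambda_{\mu^u}\subset\Lambda_\mu\subset\tfrac{1}{2(1+\zeta_6)}\Lambda_{\mu^u}$, and the finite-index claim for $\Gamma_{G,\mu}$ is handled the same way. The only divergence is in the first half, where the paper avoids your flagged ``delicate point'' (matching an intrinsically constructed Deligne--Mostow period module with $\Lambda\cap\C^{1,m-3}$) by simply \emph{defining} $\Lambda_\kappa$ as the sublattice of $\Lambda$ cut out by the vanishing of the $\Z[\zeta_6]$-linear period conditions for each coalescence and citing Thurston for the subball-quotient structure.
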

   
   \begin{proof} First, we treat the undecorated case. In the local period coordinates
   $(\int_{p_j}^{p_{j+1}} \omega)\in \C^{1,9}$, the condition that $\kappa_i$ 
   singularities $p_1,\dots,p_{\kappa_i}$ have coalesced is locally described by the equation
    $$\textstyle \int_{p_j}^{p_{j+1}}\omega =0\textrm{ for }j=1,\dots,\kappa_i-1.$$ This is a 
    $\Z[\zeta_6]$-linear condition, and thus cuts out a sublattice $\Lambda_{\kappa_i}\subset \Lambda$ 
    exactly corresponding to triangulations with a point of curvature $\kappa_i$.
   These linear conditions can be imposed for each coalescence, and are independent, 
   giving a sublattice $$\Lambda_\kappa := \cap_i\, \Lambda_{\kappa_i} \subset \Lambda.$$
   In fact, it is shown in Thurston's paper that the completed moduli space 
   $\overline{\mathcal{M}}_{\kappa_1\pi/3\,,\,\dots\,,\,\kappa_m\pi/3}$ is a subball quotient 
   $\mathbb{P}\Gamma_\kappa\backslash \mathbb{CH}^{{\rm len}(\kappa)-3}
   \subset \mathbb{P}\Gamma\backslash \mathbb{CH}^9$. The bijection 
   between $\Gamma_\kappa\backslash \Lambda^+_\kappa$ and triangulations works
    in the same way as for $M$: the triangulation associated to $v\in \Lambda_\kappa^+$ 
    corresponds to $$\C v \in \mathbb{P}\{x\in \Lambda_\kappa\otimes_{\Z[\zeta_6]} \C,\big{|}\,x\star x>0\} =
     \mathbb{CH}^{{\rm len}(\kappa)-3}.$$
   
   We now consider an orbistratum $(G,\mu)$. The moduli space of flat cone orbifolds
   containing orbitriangulations with curvature profile a collision of $\mu$ is an intermediate cover of
   $\overline{\mathcal{M}}_{\mu_1\pi/3\,,\,\dots\,,\,\mu_r\pi/3}$ as in 
   the undecorated case. The intermediate cover arises due to the fact that some points $p_i$
   are additionally marked with an orbifold order $n_i$, see Remark \ref{justify}.
    Triangulations with curvature profile $\mu^u$ correspond to $\Gamma_{\mu^{u}}$ orbits
   of $v\in \Lambda_{\mu^u}^+$ by the previous paragraph. In period coordinates, admitting a 
   triangulation is simply the condition $\int_{p_j}^{p_{j+1}}\omega\in\Z[\zeta_6]\textrm{ for all }j$.
   
   Fix a cone point $p_1$ for which the corresponding curvature $\mu_0$ is undecorated. Admitting an 
   orbitriangulation amounts to the following slightly weaker period conditions:
   \begin{align}\begin{aligned}\label{weak-ints}
   \textstyle \int_{p_1}^{p_j}\omega & \in\tfrac{1}{1+\zeta_6}\Z[\zeta_6]\textrm{ if }\mu_j=4^{\bf f}, \\ 
\textstyle \int_{p_1}^{p_j}\omega & \in\tfrac{1}{2}\Z[\zeta_6]\textrm{ if }\mu_j=3^{\bf e}, \\ 
\textstyle \int_{p_1}^{p_j}\omega & \in \Z[\zeta_6]\textrm{ otherwise.} 
\end{aligned}\end{align} This is because $\frac{1}{1+\zeta_6}\Z[\zeta_6]$ is the superlattice of $\Z[\zeta_6]$
containing the centers of the standard triangular tiles of $\C$ (and the vertices),
while $\frac{1}{2}\Z[\zeta_6]$ is the superlattice of $\Z[\zeta_6]$ containing
the centers of the edges (and the vertices).

Conditions (\ref{weak-ints}) on period integrals describe a finite index enlargement 
$\Lambda_\mu\supset \Lambda_{\mu^u}$ which is contained in
$\frac{1}{2(1+\zeta_6)}\Lambda_{\mu^u}$.  Equality of orbitriangulations
is the same as equality of the underlying flat cone spheres, preserving the ramification orders $n_i$.
Thus, $\Gamma_{G,\mu}\subset \Gamma_{\mu^u}$ is a finite index subgroup. 
   \end{proof}

\begin{remark} We will identify properties of the Eisenstein 
lattices/modules $\Lambda_\kappa$ and $\Lambda_\mu$
  in greater detail in the following section. \end{remark}
   
We call $\Lambda_\mu$ an Eisenstein module, rather than an Eisenstein lattice,  for the following reason:
  Its natural Hermitian inner product $\star$ need not be valued in $\Z[\zeta_6]$. Unlike $\Lambda_{\mu^u}$,
  there may be vectors $v\in \Lambda_\mu$ for which $v\star v\notin 3\Z$, because an orbitriangulation
  need not have an even integer number of triangles.

 \begin{definition}\label{completed-gen} 
 Let $f_{G,\mu}(q)=c_0+ \sum_{n >0} c_nq^n $, where $n \in \tfrac{1}{6} \mathbb{Z}$,
 be the completed generating function for orbitriangulations in the orbistratum $(G,\mu)$:
 Concretely, for $n > 0$, $c_n$ is the count of orbitriangulations
 $v\in \Gamma_{G,\mu}\backslash \Lambda_\mu^+$ with $2n$ triangles, 
  weighted by $|{\rm Stab}_{\Gamma_{G,\mu}}(v)|^{-1}$. 
   
 We define the constant term $c_0$ in the next section.
\end{definition}

Note that by Proposition \ref{curve-sublattice}, the dependency of $f_{G,\mu}(q)$
on $G$ is only multiplication by a scalar corresponding to the index of
$\Gamma_{G,\mu}\subset\Gamma_{\mu^u}$.

\begin{example}\label{gen-ex} We define the important generating function 
$$E(q) := \tfrac{1}{6}\sum_{v\in \Z[\zeta_6]} q^{v\overline{v}}.$$
For any $0$-dimensional decorated profile, i.e.~${\rm len}(\mu)=3$, 
we have that $f_{G,\mu}(q)\in \C E(q^r)$ where $r\in \tfrac{1}{6}\Z$ is
the (possibly fractional) number of triangles in the smallest area orbitriangulation
$\mathcal{T}_0$. This is because every orbitriangulation
with decorated profile $\mu$ is the result of scaling 
the flat structure $\mathcal{T}_0$ by some nonzero Eisenstein integer.

As a binary theta series $E(q)$ is a holomorphic modular form of weight $1$ and level $3$.
We identify it with an Eisenstein series in Example \ref{eis-ex}.
%The smallest orbitriangulation in the stratum $\{5,4^{\bf f},3^{\bf e}\}$ is formed from
%gluing up the quotient $\triangle/C_3$ of a single triangle. On the other hand,
%the smallest orbitriangulation in the stratum $\{5,4,3^{\bf e}\}$ is the result of taking a single
%triangle, folding it vertically, and gluing the edges.
%
%Every orbitriangulation in the stratum $\{5,4^{\bf f},3^{\bf e}\}$ results from taking
%the quotient $\triangle/C_3$ of a single triangle discussed in Example \ref{orbi-ex}, then scaling the flat
%structure $\omega$ by some Eisenstein integer. This follows from the fact that $\Lambda_\mu$ is
%a rank $1$ Eisenstein module. Thus, $$f_{\{5,4^{\bf f},3^{\bf e}\}}(q)=A(q^{1/6})$$ since the generating
%orbitriangulation has $\tfrac{1}{3}$ triangles.
% On the other hand, $$f_{\{5,4,3^{\bf e}\}}(q) = A(q^{1/2})$$ because the generating orbitriangulation
% has $1$ triangle. Similarly, we have $f_{\{5,4^{\bf f},3\}}(q) = A(q^{2/3})$ and 
% $f_{\{5,4,3\}}(q) = A(q^{2}).$ 
\end{example}
 
  \begin{definition} Let ${\rm BB}_{(G,\mu)}(q)$ to be generating function for naive enumeration
 of triangulations in the bistratum $(G,\mu)$ i.e. triangulations with automorphism group $G$
 and decorated quotient profile $\mu$.
 \end{definition} 
 
 Note that here we are counting the upstairs triangulations, not the quotient
 orbitriangulations, so ${\rm BB}_{(G,\mu)}(q)\in \Z[[q]]$.
 
 \begin{proposition}\label{lin-combo} Let $\{M_{(H,\nu)}\}$ be the collection of all lower-dimensional
  bistrata in the closure $\overline{M}_{(G,\mu)}$ of a fixed bistratum. There exist constants 
  $c_{(H,\nu)}\in \Q$ and $c_0\in \R$ for which 
  $$f_{G,\mu}(q^{|G|}) = {\rm BB}_{(G,\mu)}(q) + c_0+\sum_{(H,\nu)} c_{(H,\nu)} {\rm BB}_{(H,\nu)}(q).$$
  \end{proposition}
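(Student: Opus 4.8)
The plan is to read the identity as an inclusion--exclusion over the poset of bistrata inside the closure $\overline{M}_{(G,\mu)}$, converting the weighted orbit-count of orbitriangulations (downstairs) into naive counts of triangulations (upstairs). First I would rewrite the left side as a weighted orbit sum: by Definition \ref{completed-gen}, $f_{G,\mu}(q) - c_0 = \sum_v |{\rm Stab}_{\Gamma_{G,\mu}}(v)|^{-1} q^{r(v)}$, the sum over representatives $v \in \Gamma_{G,\mu}\backslash \Lambda_\mu^+$ with $r(v)$ equal to half the number of triangles of the corresponding orbitriangulation. The constant term of $f_{G,\mu}$ is forced to be the real constant $c_0$ in the statement, since every ${\rm BB}$-series begins in positive degree and the substitution $q \mapsto q^{|G|}$ leaves the constant term unchanged. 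By Proposition \ref{curve-sublattice} the $v$ range over exactly the orbitriangulations with covering group $G$ and profile a collision of $\mu$, i.e.\ over all of $\overline{M}_{(G,\mu)}$.

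Next I would split this sum according to which bistratum contains each orbitriangulation. By Proposition \ref{inertia-strata} the inertia group ${\rm Stab}_{\Gamma_{G,\mu}}(v)$ is constant on each bistratum $\beta$; writing $I_\beta$ for it, the weighted contribution of $\beta$ is $|I_\beta|^{-1} N_\beta(q)$, where $N_\beta(q)$ naively counts the orbitriangulations (the $\Gamma_{G,\mu}$-orbits) lying in $\beta$. The bistrata that occur are the open stratum $(G,\mu)$ together with the lower-dimensional $(H,\nu)\subset \overline{M}_{(G,\mu)}$ of the statement.

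The crux is to identify each $N_\beta$, after $q \mapsto q^{|G|}$, with a rational multiple of the upstairs count ${\rm BB}_{(H,\nu)}$. The substitution records area: an orbitriangulation $\mathcal{T}/G'$ with $2m$ triangles downstairs lifts to a triangulation $\mathcal{T}$ with $2m|G|$ triangles, so $q^{|G|m}$ matches the bookkeeping of ${\rm BB}$. Each upstairs triangulation $\mathcal{T}$ in the bistratum $(H,\nu)$ --- so ${\rm Aut}^+(\mathcal{T}) = H \supseteq G$ --- yields the orbitriangulations of $\beta$ by quotienting by the covering subgroups $G' \le H$ conjugate to $G$, and two such quotients agree exactly when the subgroups are conjugate in $H = {\rm Aut}^+(\mathcal{T})$. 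Since $H$ is constant on the bistratum, the number $k_\beta$ of orbitriangulations per upstairs $\mathcal{T}$ is constant, so $N_\beta(q^{|G|}) = k_\beta\,{\rm BB}_{(H,\nu)}(q)$ and $c_{(H,\nu)} = k_\beta/|I_\beta| \in \Q$. For the open stratum the answer is forced: there ${\rm Aut}^+(\mathcal{T}) = G$, so $G$ is the only covering subgroup ($k_\beta=1$), the quotient orbifold has no collided singularities and trivial orbifold automorphism group $N_G(G)/G$, whence $|I_{(G,\mu)}| = 1$ and the coefficient of ${\rm BB}_{(G,\mu)}$ is exactly $1$, as required.

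The hard part will be this last identification: controlling $I_\beta$ downstairs in terms of the upstairs data and verifying that $k_\beta$ is genuinely constant on $\beta$. Concretely I would match the extension of Proposition \ref{inertia-strata} for $\Gamma_{G,\mu}$ --- the product of local braid groups $B_{\mu_i}$ at the (possibly collided) orbifold singularities, extended by the orbifold automorphism group $N_H(G)/G$ --- against the $H$-conjugacy count of covering subgroups, checking that collisions of orbipoints are dictated by $(H,\nu)$ alone. Since the proposition asserts only existence and rationality, no explicit evaluation of the $c_{(H,\nu)}$ is needed once constancy on each bistratum is established; integrality of $|I_\beta|$ and $k_\beta$ then yields $c_{(H,\nu)} \in \Q$ and $c_0 \in \R$.
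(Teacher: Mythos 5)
Your proposal is correct and follows essentially the same route as the paper: decompose the weighted sum $f_{G,\mu}$ over the inertia strata of $\Gamma_{G,\mu}$, note that the stabilizer is constant on each such stratum, and use the $G$-cover correspondence to identify the naive stratum counts with rational multiples of the ${\rm BB}_{(H,\nu)}$, with the open stratum contributing with coefficient $1$. The paper is merely terser on the points you flag as the hard part (it disposes of the constancy of the stabilizer and of your multiplicity $k_\beta$ by observing that automorphism jumps and singularity collisions downstairs correspond exactly to those upstairs), but the content is the same.
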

 
 \begin{proof} By Corollary \ref{orbi-cor}, a triangulation $\mathcal{T}$ with automorphism group $G$ gives rise 
 to a flat cone sphere $\mathcal{T}/G$ admiting an orbitriangulation with decorated profile $\mu$. 
 But conversely, given a group $G$ and an orbitriangulation with decorated profile $\mu$, the $G$-cover
 $\mathbb{P}^1\to \mathbb{P}^1/G$ branched over the appropriate orbipoints gives
a triangulation with a $G$-action.
 
 Thus, triangulations in the open bistratum $M_{(G,\mu)}$ with $|G|n$ triangles are in bijection with
 orbitriangulations with decorated profile $\mu$ and $n$ triangles, whose $G$-cover has 
 automorphism group $G$. Furthermore, in both $f_{G,\mu}(q)$ and ${\rm BB}_{(G,\mu)}(q)$,
  these (orbi)triangulations are counted with weight $1$. 
 
 In fact, whenever the automorphism group of an orbitriangulation jumps, so does the automorphism group 
 of its $G$-cover, and the same holds for singularity collisions. Thus, the $G$-covers of orbitriangulations 
 which contribute a fixed mass to $f_{G,\mu}(q)$ lie in a disjoint union of bistrata $M_{(H,\nu)}$.
 The proposition follows. 
 \end{proof} 

\begin{remark} Proposition \ref{lin-combo} proves that the inertia strata for $\Gamma_{G,\mu}$ are unions of 
inertia strata for $\Gamma = U(\Lambda)$. However the two stratifications are not identical: a stratum for $\Gamma_{G,\mu}$ can decompose further into substrata 
for $\Gamma$. For instance, if one quotients
 a triangulation $\mathcal{T}$ by a non-normal subgroup of ${\rm Aut}^+(\mathcal{T})$, 
 the additional symmetries of $\mathcal{T}$ do not descend
 to symmetries of the orbitriangulation.
 \end{remark}

%The converse to the last paragraph of Proposition \ref{lin-combo} is false. For instance, 
%It is possible that
%an orbitriangulation contributes $1$ to $f_{G,\mu}(q)$ but its $G$-cover has an automorphism group 
%larger than $G$,

%and hence would contribute less than $1$ in a weighted count. This can occur if one quotients
% a triangulation $\mathcal{T}$ by a non-normal subgroup of ${\rm Aut}^+(\mathcal{T})$.\end{remark}
 %It seems clearer to say "It is possible that the automorphism group of the G-cover jumps but the automorphism group of the orbitriangulation does not. The "converse to a paragraph" confuses me. Peter
 
 \begin{corollary}\label{cor-bb-strata} ${\rm BB}_{(G,\mu)}(q)$ is 
 a linear combination of $f_{H,\nu}(q^{|H|})$ ranging over all bistrata 
 $M_{(H,\nu)}\subset \overline{M}_{(G,\mu)}$ (plus a constant). \end{corollary}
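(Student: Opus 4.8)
The plan is to read Corollary~\ref{cor-bb-strata} as the inversion of the system of linear relations furnished by Proposition~\ref{lin-combo}, organized by the closure poset of bistrata. For the fixed bistratum $(G,\mu)$, let $S$ be the finite set of all bistrata $(H,\nu)$ with $M_{(H,\nu)}\subset \overline{M}_{(G,\mu)}$, partially ordered by declaring $(K,\rho)\preceq (H,\nu)$ iff $M_{(K,\rho)}\subset \overline{M}_{(H,\nu)}$. Since closure is transitive, $S$ is downward closed: every stratum in the closure of a member of $S$ again lies in $S$, so applying Proposition~\ref{lin-combo} to any $(H,\nu)\in S$ keeps all terms inside $S$, and the linear system for $S$ is self-contained.

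First I would record the shape of the system. Proposition~\ref{lin-combo} gives, for each $(H,\nu)\in S$,
\[
 f_{H,\nu}(q^{|H|}) = {\rm BB}_{(H,\nu)}(q) + c_0^{(H,\nu)} + \sum_{(K,\rho)\prec (H,\nu)} c^{(H,\nu)}_{(K,\rho)}\,{\rm BB}_{(K,\rho)}(q),
\]
with $c^{(H,\nu)}_{(K,\rho)}\in\Q$ and $c_0^{(H,\nu)}\in\R$. The key structural point is that this is \emph{unitriangular} with respect to $\preceq$: the coefficient of ${\rm BB}_{(H,\nu)}$ in its own equation is $1$, and every other ${\rm BB}$-term that appears is indexed by a \emph{strictly} smaller bistratum, since the sum in Proposition~\ref{lin-combo} ranges over lower-dimensional strata of $\overline{M}_{(H,\nu)}$. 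Because $S$ is finite and graded by the dimension ${\rm len}(\nu)-3$, the order $\preceq$ is well-founded and the matrix recording the ${\rm BB}$-coefficients is triangular with unit diagonal, hence invertible over $\Q$.

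I would then invert by downward induction on dimension within $S$. The minimal ($0$-dimensional) strata have no proper substrata, so there the relation reads ${\rm BB}_{(H,\nu)}(q)=f_{H,\nu}(q^{|H|})-c_0^{(H,\nu)}$ directly. For higher strata, assuming every ${\rm BB}_{(K,\rho)}$ with $(K,\rho)\prec(H,\nu)$ has already been written as a $\Q$-linear combination of the $f_{K',\rho'}(q^{|K'|})$ plus a real constant, I substitute these into the displayed equation and solve for ${\rm BB}_{(H,\nu)}$; since the coefficient of ${\rm BB}_{(H,\nu)}$ is $1$, this again expresses it as a $\Q$-linear combination of $f$-terms indexed by strata $\preceq(H,\nu)$, plus a constant obtained by combining the various $c_0$'s. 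Taking $(H,\nu)=(G,\mu)$ then yields the claimed formula.

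The part needing care, rather than a genuine obstacle, is the bookkeeping of constants and coefficient domains: the ${\rm BB}$-series lie in $\Z[[q]]$ with no constant term, while each $f_{H,\nu}$ carries its own constant $c_0$, so the inversion produces exactly one aggregate real constant—matching the ``(plus a constant)'' in the statement—and all coefficients on the $f$'s remain rational. One should also confirm the two facts that legitimize the triangular scheme, namely that $S$ is downward closed under $\preceq$ and that no diagonal stratum reappears in its own lower sum; both are immediate from Proposition~\ref{lin-combo} together with the transitivity of closure.
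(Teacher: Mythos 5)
Your argument is correct and is essentially the paper's own proof: the paper likewise observes that Proposition~\ref{lin-combo} exhibits a strictly triangular (unit-diagonal) change of basis between the ${\rm BB}_{(H,\nu)}(q)$ and the $f_{H,\nu}(q^{|H|})$ over the closure poset of bistrata, and inverts it. Your write-up merely makes explicit the downward-closedness of the index set and the induction on dimension that the paper leaves implicit.
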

 
 \begin{proof} This follows from Proposition \ref{lin-combo}, which shows that 
  ${\rm BB}_{(G,\mu)}(q)$ and $f_{G,\mu}(q^{|G|})$ differ
 by a strictly triangular change-of-basis.    \end{proof}

\begin{theorem}\label{include-exclude} The naive generating function ${\rm BB}(q)$ for fullerenes is a linear
 combination of $f_{G,\mu}(q^{|G|})$, ranging over all bistrata $M_{(G,\mu)}\subset M$.  \end{theorem}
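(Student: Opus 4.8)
The plan is to realize the full fullerene count as a finite sum of the per-bistratum counts ${\rm BB}_{(G,\mu)}(q)$ and then apply the change-of-basis already in hand. Fullerenes are precisely the triangulations lying in the open stratum $\mathcal{M}_{\pi/3,\dots,\pi/3}$, i.e.\ those with undecorated curvature profile $\{1^{12}\}$ consisting of twelve distinct curvature-$1$ points. By Proposition \ref{inertia-strata} the bistratification partitions this curvature stratum according to the oriented automorphism group $G = {\rm Aut}^+(\mathcal{T})$ together with the decorated profile $\mu$ of the quotient $\mathcal{T}/G$. Since every fullerene has a well-defined automorphism group and a well-defined decorated quotient profile, each one lies in exactly one bistratum, and ${\rm BB}_{(G,\mu)}(q)$ naively counts the triangulations in $M_{(G,\mu)}$. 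I would therefore begin by writing
$${\rm BB}(q) = \sum_{(G,\mu)} {\rm BB}_{(G,\mu)}(q),$$
the sum running over those bistrata whose upstairs curvature profile is $\{1^{12}\}$.

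I would then pin down which $(G,\mu)$ actually occur and confirm the sum is finite. A singular orbifold point with indices $(n_i,\mu_i)$ lifts to $|G|/n_i$ upstairs points of curvature $\kappa_j$ determined by $6-\kappa_j = n_i(6-\mu_i)$, while points with $n_i(6-\mu_i)=6$ are non-singular upstairs. For the $G$-cover to be a fullerene we thus need $n_i(6-\mu_i)\in\{5,6\}$ at every $i$, with the curvature-$1$ points summing to twelve. Since $5$ is prime, the curvature-$1$ contributions come only from $(n_i,\mu_i)=(1,1)$ or $(5,5)$, while the decorated entries $3^{\bf e}$ (from $(n_i,\mu_i)=(2,3)$) and $4^{\bf f}$ (from $(3,4)$) are exactly the non-singular-upstairs points with $n_i(6-\mu_i)=6$. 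As $G$ ranges over the finite poset of Proposition \ref{poset-prop} and $\mu$ over the finitely many profiles meeting these constraints, only finitely many bistrata contribute.

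It then remains to invoke Corollary \ref{cor-bb-strata} termwise: for each fullerene bistratum, ${\rm BB}_{(G,\mu)}(q)$ is a rational linear combination of the completed series $f_{H,\nu}(q^{|H|})$ over the bistrata $M_{(H,\nu)}\subset \overline{M}_{(G,\mu)}$, plus a constant. Summing these finitely many relations and collecting coefficients exhibits ${\rm BB}(q)$ as a linear combination of the $f_{H,\nu}(q^{|H|})$ indexed by bistrata of $M$ (those in the union of the relevant closures); the residual constant is absorbed into the nonzero constant terms $c_0$ of the $f$'s from Definition \ref{completed-gen}.

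The main obstacle is the first step: establishing that the naive count is exactly $\sum_{(G,\mu)} {\rm BB}_{(G,\mu)}(q)$ with neither over- nor under-counting. This rests on the bistratification of the open curvature stratum being a genuine partition, which Proposition \ref{inertia-strata} supplies, and on the resulting pieces being mutually disjoint. The enumeration of admissible $(G,\mu)$ is then a finite, if slightly tedious, case analysis governed by the arithmetic constraint $n_i(6-\mu_i)\in\{5,6\}$ together with the icosahedral, dihedral, and cyclic orbit structures allowed by Proposition \ref{poset-prop}.
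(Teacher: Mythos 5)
Your proof is correct and follows essentially the same route as the paper: write ${\rm BB}(q)$ as the sum of ${\rm BB}_{(G,\mu)}(q)$ over the (finitely many) bistrata whose upstairs curvature profile is $\{1^{12}\}$, then apply Corollary \ref{cor-bb-strata} termwise. The extra case analysis of which $(G,\mu)$ occur is not needed (finiteness of the bistrata is already established), and note in passing that $(n_i,\mu_i)=(6,5)$ also gives a point non-singular upstairs, not only the decorated entries $3^{\bf e}$ and $4^{\bf f}$.
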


\begin{proof} This follows immediately from Corollary \ref{cor-bb-strata} as 
$${\rm BB}(q) = \sum_{\substack{ (G,\mu)\textrm{ s.t.} \\ \kappa \,=\, 1^{12}}} {\rm BB}_{(G,\mu)}(q)$$
 where $\kappa$ is the curvature profile of the
$G$-cover of an orbitriangulation with decorated curvature profile $\mu$. \end{proof}

The final computation for this section is the enumeration of all bistrata. This is accomplished relatively
easily by considering the generic flat cone quotient for each group: 

\begin{table}
\begin{tabular}{|c | c| c|}
\hline
Group $G$ & Orbifold $\mathbb{P}^1/G$ & Longest profile $\mu$ \\ 
\hline
$I$ & $\mathbb{P}_{5,3,2}$ & $\{5,4^{\bf f},3^{\bf e}\}$ \\

$O$ & $\mathbb{P}_{4,3,2}$ & $\{5,4^{\bf f},3^{\bf e}\}^*$ \\

$T$ &  $\mathbb{P}_{3,3,2}$ & $\{4^{\bf f}, 4^{\bf f}, 3^{\bf e},1\}$  \\

 $D_6$ & $\mathbb{P}_{6,2,2}$ & $\{5, 3^{\bf e}, 3^{\bf e}, 1\}$ \\

 $D_5$ & $\mathbb{P}_{5,2,2}$ & $\{5, 3^{\bf e}, 3^{\bf e}, 1\}$  \\

 $D_4$ & $\mathbb{P}_{4,2,2}$ & $\{5, 3^{\bf e}, 3^{\bf e}, 1\}^*$ \\

 $D_3$ & $\mathbb{P}_{3,2,2}$ & $\{4^{\bf f}, 3^{\bf e}, 3^{\bf e},1,1\}$ \\

 $C_3$ & $\mathbb{P}_{3,3}$ & $\{4^{\bf f}, 4^{\bf f}, 1^4\}$ \\

 $D_2$ & $\mathbb{P}_{2,2,2}$ & $\{3^{\bf e}, 3^{\bf e}, 3^{\bf e},1^3\}$ \\

 $C_2$ & $\mathbb{P}_{2,2}$ & $\{3^{\bf e}, 3^{\bf e},1^6\}$ \\

 $C_1$ & $\mathbb{P}^1$ & $\{1^{12}\}$ \\
\hline
\end{tabular}
\vspace{3pt}

\caption{Relevant orbifolds, with largest bistratum}
\label{table-orb}
\end{table}

\begin{remark} We explain the meaning of $*$ in Table \ref{table-orb}.
The cuboctahedron is another flat cone sphere in $M$
with $O$ symmetry, of maximal dimension (zero) for the group.
But it cannot be equilaterally triangulated.

Similarly,
the referee pointed out that there is a second bistratum of complex dimension
$1$ with symmetry group $D_4$ corresponding to flat cone spheres with one
orbit of size $8$ and one orbit of size $4$. But none of these are triangulable,
since they have a point of cone angle $\tfrac{\pi}{2}$ corresponding to the images
of the poles of the order $4$ rotation.\end{remark}

All bistrata not shown in Table \ref{table-orb} result from colliding some singularities of curvature $\mu_i=1$ either into each other
or the orbipoints of $\mathbb{P}^1/G$.
%
%This follows from the fact that if a triangulated $G$-cover is convex,
%the local ramification order of the cover at $p_i$ must satisfy: 
%\begin{enumerate}
%\item ${\rm ord}_{p_i}(G)\leq 6$ if $\mu_i=5$.
%\item ${\rm ord}_{p_i}(G)\leq 3$ if $\mu_i=4$.
%\item ${\rm ord}_{p_i}(G)\leq 2$ if $\mu_i=3$.
%\item ${\rm ord}_{p_i}(G)=1$ if $\mu_i=2,1$.
%\end{enumerate}
The results of such collisions are shown in Table \ref{bistrata}.
On occasion, the $G$-cover of an
orbitrianguation with specified $(G,\mu)$ automatically
has a larger automorphism group,
leading to some repeated bistrata in Table \ref{bistrata}.
This is indicated in the ``Description" column, and only
occurs for bistrata of dimension $0$ or $1$.

\begin{proposition} There are 96 triangulable bistrata of $M$.
\end{proposition}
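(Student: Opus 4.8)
The plan is to turn the statement into a finite, group-by-group enumeration of decorated profiles, using Table \ref{table-orb} as the starting data and then pruning carefully for triangulability and for accidental symmetry. By the criterion for a nonempty bistratum recorded above, together with Proposition \ref{curve-sublattice} and Corollary \ref{orbi-cor}, a triangulable bistratum is the same datum as a pair $(G,\mu)$ in which $G$ is one of the eleven groups of Proposition \ref{poset-prop}, the ramification orders of $\mu$ are those of the orbipoints of $\mathbb{P}^1/G$, each curvature satisfies $0<\mu_i<6$ and $n_i(6-\mu_i)\le 6$, and $\sum_i\mu_i=12$. Triangulability forces every $\mu_i$ to be an integer, and I insist that $G$ be the full oriented automorphism group of the generic flat cone sphere in the stratum. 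Since the largest allowed curvature is $5$, the relation $\sum_i\mu_i=12$ already forces at least three points, so every bistratum has complex dimension $\mathrm{len}(\mu)-3\ge 0$.

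First I would fix a group $G$ and read off the orbipoint orders of $\mathbb{P}^1/G$ from Table \ref{table-orb}. The inequality $n_i(6-\mu_i)\le 6$ constrains the curvature at each orbipoint: an order-$2$ point carries $\mu\in\{3^{\mathbf{e}},4,5\}$, an order-$3$ point carries $\mu\in\{4^{\mathbf{f}},5\}$, and an order-$\ge 4$ point is forced to $\mu=5$. The residual curvature $12-\sum_{\text{orbipoints}}\mu_i$ is then distributed over the free points (those with $n_i=1$ and $\mu\in\{1,\dots,5\}$), that is, written as a partition into parts of size at most $5$. Summing the number of such partitions over the finitely many admissible orbipoint assignments produces, for each group, a complete list of candidate profiles. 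Equivalently, one begins with the longest profile of Table \ref{table-orb}---orbipoints at minimal curvature together with the maximal number of curvature-$1$ free points---and collides those curvature-$1$ points with one another and into the orbipoints; this is exactly the content of Table \ref{bistrata}.

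The step I expect to be the main obstacle is pruning this candidate list to genuine, pairwise-distinct triangulable bistrata, and it splits into two parts. First, some candidates are not triangulable because a $G$-fixed point would be forced to a cone angle that is not a multiple of $\pi/3$, equivalently a non-integer orbifold curvature; these are the entries flagged with $*$, namely the cuboctahedron for $O$ and the second complex-dimension-one family with symmetry $D_4$, and they must be discarded. Second, and more delicate, certain candidate profiles have a generic flat cone sphere whose symmetry is strictly larger than $G$, so the stratum is not new but coincides with one already counted under the larger group. By the observation recorded after Table \ref{table-orb}---intuitively, a positive-dimensional stratum has enough freedom to realize configurations with symmetry exactly $G$---these coincidences are confined to complex dimensions $0$ and $1$, so only profiles with three or four points need to be checked. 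I would therefore examine each such low-dimensional candidate geometrically: for instance the six edge-midpoints of a regular tetrahedron always form a regular octahedron, so $(T,\{4^{\mathbf{f}},4^{\mathbf{f}},4\})$ is really the octahedral stratum, and likewise a square bipyramid with all curvatures $2$ is the octahedron, so $(D_4,\{5,4,3^{\mathbf{e}}\})$ collapses onto $(O,\{5,4^{\mathbf{f}},3^{\mathbf{e}}\})$. Each such coincidence is deleted once. Note that profiles sharing the same decorated symbol but attached to different groups, such as the tetrahedron, octahedron, and icosahedron all written $\{5,4^{\mathbf{f}},3^{\mathbf{e}}\}$, remain distinct because $G$ is part of the datum and the curvature profiles upstairs differ.

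Finally I would tally the surviving strata across all eleven groups. The overwhelming majority are the generic strata under $C_1$, one for each curvature partition of $12$ into parts $\le 5$ whose generic point configuration has no projective symmetry, while the dihedral and exceptional groups supply the remaining low-dimensional strata---the bipyramids, the platonic solids, the doubled triangle, and their degenerations. Carrying out the bookkeeping of Table \ref{bistrata}, with the $*$-entries removed and the dimension-$0$ and dimension-$1$ coincidences identified, yields exactly $96$ triangulable bistrata.
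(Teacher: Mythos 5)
Your proposal is correct and follows essentially the same route as the paper, whose proof is simply the direct enumeration recorded in Tables \ref{table-orb} and \ref{bistrata}: you reproduce that enumeration (admissible decorated profiles per group via the constraints $0<\mu_i<6$, $n_i(6-\mu_i)\le 6$, $\sum\mu_i=12$, then removal of the non-triangulable $*$-entries and identification of the dimension-$0$ and dimension-$1$ strata whose generic symmetry jumps). Your write-up actually supplies more justification than the paper does for this step.
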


\begin{proof} The proposition follows from direct enumeration. \end{proof}

{\footnotesize 
\begin{longtable}[H]{| c | c | c | c | c | c | c | c |}
\caption{Triangulable bistrata of $M$}\label{bistrata}
\endfirsthead
\hline
$G$ & $|G|$ & $n_i$ & $\mu_i$ & Weight & Level  & $\C f_{G,\mu}(q^{|G|})$ & Description \\
    \hline
$I$ & $60$ & ${5,3,2}$ & ${5,4^{\bf f},3^{\bf e}}$  & 1  & $30$ & $E(q^{10})$ &  icosahedron \\
\hline
$O$ & $24$ & ${4,3,2}$ & ${5,4^{\bf f},3^{\bf e}}$ & 1 & $12$ & $E(q^4)$ & octahedron  \\
\hline
$T$ & $12$ &  ${3,3,2}$ & ${4^{\bf f},4^{\bf f},3^{\bf e},1}$  & 2   &   $(6,12)$ &     &   \\
 & &  & ${4^{\bf f},4^{\bf f},4}$ & 1      & $12$  &   $E(q^4)$ &  $(O,\{5,4^{\bf f}, 3^{\bf e}\})$ \\
 & & & ${5,4^{\bf f},3^{\bf e}}$  & 1     & $6$ &   $E(q^2)$  & tetrahedron  \\
\hline
$D_6$  & $12$  & ${6,2,2}$ & ${5,3^{\bf e},3^{\bf e},1}$   &  $2$ (Q) & $6$ & $E_2(q^6)$  & $6$-tubes  \\
& & & ${5,4,3^{\bf e}}$  & $1$ & $18$ & $E(q^{6})$ & doubled hexagon  \\
\hline
$D_5$& $10$ & ${5,2,2}$ & ${5,3^{\bf e},3^{\bf e},1}$    & $2$ (Q)  & $5$ & $E_2(q^5)$ &  $5$-tubes     \\
& & & ${5,4,3^{\bf e}}$  & $1$ & $15$ & $E(q^{5})$ & pentagonal bipyramid  \\
\hline
$D_4$ & $8$ & ${4,2,2}$ & ${5,3^{\bf e},3^{\bf e},1}$   & $2$ (Q) & $4$ & $E_2(q^4)$ &  $4$-tubes     \\
& & & ${5,4,3^{\bf e}}$  & $1$ & $12$ & $E(q^{4})$ & $(O,\{5,4^{\bf f}, 3^{\bf e}\})$  \\
\hline
$D_3$ & $6$ & ${3,2,2}$ & ${4^{\bf f},3^{\bf e},3^{\bf e},1,1}$ & $3$ & $(6,6)$ &  & \\
& & & ${4^{\bf f},3^{\bf e},3^{\bf e},2}$ & $2$ (Q)  & $(6,6)$ &  & \\
& & &  ${4^{\bf f},4,3^{\bf e},1}$ & $2$ & $(6,6)$ &  & \\
& & & ${4^{\bf f},4,4}$ & $1$ & $18$ & $E(q^{6})$ &  $(D_6, \{5,4,3^{\bf e}\})$  \\
 & & & ${5,3^{\bf e},3^{\bf e},1}$ & $2$ (Q)  & $3$ & $E_2(q^3)$ & $3$-tubes \\
& & & ${5,4,3^{\bf e}}$ & 1  & $9$ & $E(q^3)$ & triangular bipyramid \\
& & & ${4^{\bf f},5,3^{\bf e}}$ & 1  & $3$ & $E(q)$ & doubled triangle \\
\hline
$C_3$ & $3$ & ${3,3}$ & ${4^{\bf f},4^{\bf f},1^4}$ & $4$ & $(3,3)$ &  & \\
& & & ${4^{\bf f},4^{\bf f},2,1,1}$  & $3$ & $(3,3)$ &  & \\
& & & ${4^{\bf f},4^{\bf f},2,2}$ & $2$ (Q)  & $(3,3)$ &  & $(D_3,\{4^{\bf f},3^{\bf e},3^{\bf e}, 2\})$  \\
& & & ${4^{\bf f},4^{\bf f},3,1}$ & $2$ & $(6,3)$ &  & \\
& & & ${4^{\bf f},4^{\bf f},4}$ & $1$ & $3$ & $E(q)$ &  $(D_3,\{4^{\bf f},5,3^{\bf e}\})$  \\
& & & ${5,4^{\bf f},1^3}$ & $3$ & $(3,3)$ &  & \\
& & & ${5,4^{\bf f},2,1}$ & $2$ (Q)  & $(3,3)$ &  &  \\
& & & ${5,4^{\bf f},3}$ & $1$ & $6$ & $E(q^2)$ &  $(T, \{5,4^{\bf f},3^{\bf e}\})$ \\
& & &  ${5,5,1,1}$ & $2$ (Q)  & $3$ & $E_2(q^3)$ &  $(D_3, \{5,3^{\bf e},3^{\bf e},1\})$ \\
& & & ${5,5,2}$ & $1$ & $9$ & $E(q^3)$ &  $(D_3, \{5,4,3^{\bf e}\})$ \\
\hline
$D_2$ & $4$ & ${2,2,2}$ & ${3^{\bf e},3^{\bf e},3^{\bf e},1^3}$& $4$ & $(2,4)$ &  & \\
& & & ${3^{\bf e},3^{\bf e},3^{\bf e},2,1}$  & $3$ & $(6,4)$ &  & \\
& & & ${3^{\bf e},3^{\bf e},3^{\bf e},3}$ & $2$ (Q) & $2$ & $E_2(q^2)$ & pillowcases \\
& & & ${4,3^{\bf e},3^{\bf e},1,1}$  & $3$ & $(6,4)$ &  & \\
& & & ${4,3^{\bf e},3^{\bf e},2}$  & $2$ (Q)  & $(6,4)$ &  & \\
& & & ${4,4,3^{\bf e},1}$ & $2$ & $(6,4)$ &  & \\
& & & ${4,4,4}$ & $1$ & $12$  & $E(q^4)$ &  $(O, \{5,4^{\bf f},3^{\bf e}\})$   \\
& & & ${5,3^{\bf e},3^{\bf e},1}$ & $2$ (Q)  & $2$ & $E_2(q^2)$ & $2$-tubes \\
& & & ${5,4,3^{\bf e}}$ & $1$ & $6$ & $E(q^2)$ & doubled rhombus \\
\hline
$C_2$ & $2$ & ${2,2}$ & ${3^{\bf e},3^{\bf e}, 1^6}$& $6$ & $(2,2)$ &  & \\
& & & ${3^{\bf e},3^{\bf e}, 2,1^4}$ & $5$ & $(6,2)$ &  & \\
& & & ${3^{\bf e},3^{\bf e}, 2,2,1,1}$& $4$ & $(6,2)$ &  & \\
& & & ${3^{\bf e},3^{\bf e}, 2,2,2}$ & $3$ & $(6,2)$ &  & \\
& & & ${3^{\bf e},3^{\bf e}, 3,1^3}$  & $4$ & $(2,2)$ &  & \\
& & & ${3^{\bf e},3^{\bf e}, 3,2,1}$ &$3$  & $(6,2)$ &  & \\
& & & ${3^{\bf e},3^{\bf e}, 3,3}$ & $2$ (Q) & $2$ & $E_2(q^2)$ &   $(D_2, \{3^{\bf e},3^{\bf e},3^{\bf e},3\})$ \\
& & & ${3^{\bf e},3^{\bf e}, 4,1,1}$  & $3$ & $(6,2)$ &  & \\
& & & ${3^{\bf e},3^{\bf e}, 4,2}$ & $2$ (Q) & $(6,2)$ &  & \\
& & & ${3^{\bf e},3^{\bf e}, 5,1}$  & $2$ (Q) & $1$ & $E_2(q)$ & $1$-tubes \\
& & & ${4,3^{\bf e}, 1^5}$  & $5$ & $(6,2)$ &  & \\
& & & ${4,3^{\bf e}, 2,1^3}$  & $4$ & $(6,2)$ &  & \\
& & & ${4,3^{\bf e}, 2,2,1}$ & $3$ & $(6,2)$ &  & \\
& & & ${4,3^{\bf e}, 3,1,1}$  & $3$ & $(6,2)$ &  & \\
& & & ${4,3^{\bf e}, 3,2}$  & $2$ (Q)  & $(6,2)$ &  &  \\
& & & ${4,3^{\bf e}, 4,1}$  & $2$ & $(6,2)$ &  & \\
& & & ${4,3^{\bf e}, 5}$ & $1$ & $3$ & $E(q)$ & doubled pediment \\
& & & ${4,4, 1^4}$  & $4$ & $(3,2)$ &  & \\
& & & ${4,4, 2,1,1}$  & $3$ & $(3,2)$  &  & \\
& & & ${4,4, 2,2}$  & $2$ (Q) & $(3,2)$ &  & $(D_2, \{4, 3^{\bf e}, 3^{\bf e}, 2\})$ \\
& & & ${4,4, 3,1}$  & $2$ & $(6,2)$ &  & \\
& & & ${4,4,4}$ & $1$ & $6$ & $E(q^2)$ & $(D_2,\{5,4,3^{\bf e}\})$ \\
& & & ${5,3^{\bf e},1^4}$ & $4$ & $(2,2)$ &  & \\
& & & ${5,3^{\bf e},2,1,1}$  & $3$ & $(6,2)$ &  & \\
& & & ${5,3^{\bf e},2,2}$  & $2$ & $(6,2)$ &  & \\
& & & ${5,3^{\bf e},3,1}$ & $2$ (Q)  & $(2,2)$ &  & \\
& & & ${5,3^{\bf e},4}$  & $1$ & $3$ & $E(q)$ & $(D_3,\{4^{\bf f},5,3^{\bf e})$ \\
& & & ${5,4,1^3}$ & $3$ & $(3,2)$ &  & \\
& & & ${5,4,2,1}$  & $2$ (Q)  & $(3,2)$ &  &  \\
 & & & ${5,4,3}$ & $1$ & $12$ & $E(q^4)$ & doubled dart \\
& & & ${5,5,1,1}$ & $2$ (Q)  & 2 & $E_2(q^2)$ & $(D_2, \{5,3^{\bf e}, 3^{\bf e}, 1\})$ \\
& & & ${5,5,2}$ & $1$ & $6$ & $E(q^2)$ &  $(D_2, \{5,4, 3^{\bf e}\})$ \\
\hline
$C_1$  & $1$ & $\mathbb{P}^1$ & $1^{12}$ & $10$ & $1$ & $E_{10}(q)$ & buckyballs \\
 & &  & $2,1^{10}$ & $9$ & $3$ &  & \\
 & &  & ${3,1^9}$ & $8$ & $2$ &  & \\
 & &  & ${2,2,1^8}$ & $8$ & $3$ &  & \\
 & &  & ${3,2,1^7}$ & $7$ & $6$ &  & \\
  & & & ($31$ more)  & $\cdots$ & $6$ &  &  \\
 &  &  &  $3,3,3,3$ & $2$ (Q)  & $2$ & $E_2(q^2)$ & $(D_2, \{3^{\bf e}, 3^{\bf e}, 3^{\bf e},3\})$  \\
  &  &  &  $4,3,3,2$ & $2$ (Q)  & $6$ &  & \\
    &  &  &  $4,4,2,2$ & $2$ (Q)  & $3$  &  & $(C_2,\{3^{\bf e},3^{\bf e},4,2\}) $\\
  &  &  &  $4,4,3,1$ & $2$ &$6$ &   & \\
  &  &  &  $5,3,2,2$ & $2$ & $6$ &  & \\
&  &  &  $5,3,3,1$ &  $2$ (Q)  & $2$ &  &   \\
  &  &  & $5,4,2,1$ & $2$ (Q)  &$3$ &  & \\
& & & $5,5,1,1$ & $2$ (Q)  & $1$ & $E_2(q)$ &  $(C_2, \{3^{\bf e}, 3^{\bf e},5,1\})$ \\
&  &  &  $4,4,4$ &$1$ & $3$ & $E(q)$ & $(D_3, \{4^{\bf f},5,3^{\bf e}\})$   \\
&  &  & $5,4,3$ & $1$ & $6$ & $E(q^2)$  & (doubled 30-60-90) \\
& & & $5,5,2$ & $1$ & $3$ & $E(q)$ & $(C_2, \{4,3^{\bf e},5\})$    \\
  \hline
\end{longtable}
}

\begin{centering} 

{\sc Taxonomy of triangulations} \vspace{5pt}

\end{centering}

We explain the terms used in the ``Description" column of Table \ref{bistrata}.

\begin{definition} A {\it doubled polygon} is the flat cone sphere constructed
by taking two copies of the polygon (with opposite orientations), and gluing them
along their common boundary to form a sphere, with cone points
at the vertices of the glued polygons.

Doubles appearing as $0$-dimensional bistrata are of
the (regular) {\it hexagon}, (equilateral) {\it triangle}, the {\it $\mathit{30}$-$\mathit{60}$-$\mathit{90}$ triangle},
the $60$-$120$-$60$-$120$ {\it rhombus}, the {\it pediment} which is a $30$-$30$-$120$ triangle, and
the {\it dart} which is a $60$-$90$-$120$-$90$ quadrilateral forming a fundamental domain for $\triangle/C_3$.
\end{definition}

\begin{definition}
The {\it pillowcases} are the quotients by $\{\pm 1\}$
of equilaterally triangulated elliptic curves $\C/L$, $L\subset \Z[\zeta_6]$.
\end{definition}

\begin{definition} A {\it $k$-gonal bipyramid} is the result of gluing two pyramids with $k$-gonal base
along their bases. The resulting triangulation has $2k$ triangles and $D_k$ symmetry (at least). \end{definition}
  
The hexagonal bipyramid is also a doubled hexagon, and the square bipyramid is also an octahedron.

\begin{definition} A {\it $k$-tube}, $k\in\{1,2,3,4,5,6\}$ is built from three pieces. At the left and 
right ends, we have a $k$-gonal pyramid with equilateral triangular faces. These $k$-gonal pyramids 
are glued to the two boundary components of a flat cylinder, which goes between the two ends. \end{definition}

The $k$-tubes form $1$-complex dimensional bistrata for the group $D_k$. The two real dimensions
are the length of the tube and its twisting parameter. The terminology is motivated
by {\it buckytubes}, which are the fullerenes dual to $5$-tubes.

\section{Modularity of bistratum generating functions}

In this section, we prove the modularity properties of the generating functions $f_{G,\mu}(q^{|G|})$.
We recall from \cite{es}, see also Allcock \cite{allcock},
that the lattice $(\Lambda,\star)$ is {\it $(1+\zeta_6)$-modular}:
$(1+\zeta_6)\Lambda^\vee=\Lambda$ where
$$\Lambda^\vee:=\{v\in \Lambda\otimes_{\Z[\zeta_6]}\C\,\big{|}\,v\star w\in \Z[\zeta_6]
\textrm{ for all }w\in \Lambda\}$$ is the Hermitian dual. It is the unique such Eisenstein
lattice of signature $(1,9)$. We have ${\rm Re}(v\star w) \in \tfrac{3}{2}\Z$,
and so we have an even $\Z$-lattice $(\Lambda,\cdot)$ of signature $(2,18)$ defined by
$$v\cdot w := \tfrac{2}{3}{\rm Re}(v\star w).$$ This lattice is even unimodular of signature $(2,18)$,
hence isometric to ${\rm II}_{2,18}$. Denote the $\Z$-dual of $(\Lambda,\cdot)$ by $\Lambda^*$.
Then $\Lambda^*=\Lambda$. For any even $\Z$-lattice, $\cdot$ defines a $\Q/2\Z$-valued
quadratic form on the discriminant group $\Lambda^*/\Lambda$.

\begin{definition} The {\it principal congruence subgroup
of level $N$} is the subgroup $\Gamma(N)\subset {\rm SL}_2(\Z)$ of $2\times 2$ integral,
determinant $1$ matrices which reduce to the identity mod $N$.
It is contained in the group $\Gamma_1(N)$ of matrices
which, mod $N$, are upper triangular with $1$'s on the diagonal. 
Given any subgroup $H\subset (\Z/N\Z)^*$, we define
$\Gamma_H(N)$ as the matrices whose reduction mod $N$ is upper triangular, and with diagonal
entries in $H$. \end{definition}

\begin{definition} A real-analytic function $f\colon \mathbb{H}\to \C$ is {\it modular of 
weight $(r,s)$} for a subgroup $\Gamma \subset {\rm SL}_2(\Z)$ if it satisfies 
$$f(\gamma\cdot \tau) = (c\tau +d)^r(c\overline{\tau}+d)^sf(\tau)$$ for all $\gamma = \twobytwo{a}{b}{c}{d}\in \Gamma$. \end{definition}

We will need a generalization of this to vector-valued modular forms:

\begin{definition} Let $\rho\colon {\rm SL}_2(\Z)\to {\rm GL}(E)$ be a finite-dimensional representation
on a complex vector space $E$. A real-analytic function $f\colon \mathbb{H}\to E$ is {\it modular for ${\rm SL}_2(\Z)$ of weight $(r,s)$ 
for the representation $\rho$} if $$f(\gamma\cdot \tau) = (c\tau+d)^r(c\overline{\tau}+d)^s\rho(\gamma)\cdot f(\tau)$$
 for all $\gamma\in {\rm SL}_2(\Z)$. \end{definition}

\begin{definition} A {\it modular form of weight $k$} is a function $f(\tau)$ on the upper half-plane
such that:
\begin{enumerate}
\item $f$ is holomorphic,
\item $f$ is modular of weight $(k,0)$, and
\item $f$ has bounded growth at the cusps of ${\rm SL}_2(\Z)\backslash \mathbb{H}$.
\end{enumerate}
This definition applies both to modular forms valued in a representation $\rho$ and scalar-valued
modular forms for subgroups of ${\rm SL}_2(\Z)$. \end{definition}

Thus, we distinguish between ``modular" functions and ``modular forms."

\begin{definition} Let $(\Lambda,\cdot)$ be an even $\Z$-lattice of even rank and signature $(r,s)$. The 
{\it Weil representation} (see \cite[Sec.~4]{borcherds})
$$\rho_\Lambda\colon  {\rm SL}_2(\Z)\to {\rm GL}(\C[\Lambda^*/\Lambda])$$ sends generators
 $S:= \twobytwo{0}{-1}{1}{0}$ and $T:=\twobytwo{1}{1}{0}{1}\in {\rm SL}_2(\Z)$ to 
 \begin{align*} \rho_\Lambda(S)e_\gamma &= \frac{i^{(s-r)/2}}{\sqrt{|\Lambda^*/\Lambda|}}
 \sum_{\delta\in \Lambda^*/\Lambda} e^{-2\pi i (\gamma\cdot \delta)}e_\delta \\ 
 \rho_\Lambda(T)e_\gamma &= e^{\pi i( \gamma\cdot \gamma)}e_\gamma.\end{align*} 
 Here $\gamma\in \Lambda^*/\Lambda$ and $e_\gamma$ is a basis element of the group algebra $\C[\Lambda^*/\Lambda]$. \end{definition}

\begin{definition} Let $(\Lambda,\cdot)$ be a $\Z$-lattice of signature $(2r,2s)$,
 and let $\mathbb{D}:={\rm Gr}^+(2r,V\otimes\R)$ be the Grassmannian of positive-definite $2r$-planes. 
 The {\it Siegel theta function} $\mathbb{D}\times \mathbb{H}\to \C[\Lambda^*/\Lambda]$ is
  given by $$\Theta(p,\tau):=\sum_{v\in\Lambda^*} q^{\frac{1}{2}v^+\cdot v^+}\overline{q}^{-\frac{1}{2}v^-\cdot v^-}e_{[v]}.$$
   Here $q= e^{2\pi i \tau}$, $v^+$ and $v^-$ are orthogonal projections to the positive $2r$-space $p\in \mathbb{D}$ and its negative-definite 
   perpendicular $2s$-space $p^\perp$ respectively, and $e_{[v]}$ is the basis element of $\C[\Lambda^*/\Lambda]$ 
   corresponding to the reduction of $v\in\Lambda^*$. \end{definition}

Let ${\rm O}^*(\Lambda)$ be the subgroup of the orthogonal group acting trivially
on $\Lambda^*/\Lambda$. It is clear from the definition 
that $\Theta(g\cdot p,\tau)=\Theta(p,\tau)$ for all $g\in {\rm O}^*(\Lambda)$. We
additionally have the transformation property
$$\Theta\left(p,\frac{-1}{\tau}\right)=\tau^r\overline{\tau}^s\rho_\Lambda(S)\cdot \Theta(p,\tau)$$ 
in $\tau$, which follows from the Poisson summation formula, see \cite[Thm.~4.1]{borcherds}.
Thus, in the $\tau$ variable, $\Theta(p,\tau)$ is modular of weight $(r,s)$ for the
Weil representation $\rho_\Lambda$. 

\begin{definition} Let $(\Lambda,\star)$ be an
Eisenstein lattice of signature $(1,s)$ with $\star$ valued in $(1+\zeta_6)\Z[\zeta_6]$ and let
 $v\cdot w:=\tfrac{2}{3}{\rm Re}(v\star w)$ be the corresponding even $\Z$-lattice of signature $(2,2s)$. 
 Let ${\rm U}^*(\Lambda)$ be the subgroup of the unitary group acting trivially on $\Lambda^*/\Lambda$. Define the 
 function $$g_\Lambda(\tau) := \frac{({\rm Im}\,\tau)^s}{|Z({\rm U}^*(\Lambda))|}
 \int_{{\rm U}^*(\Lambda)\backslash \mathbb{CH}^s}\Theta(p,\tau)\,dp$$ which is
a real-analytic function from $\mathbb{H}$ to $\C[\Lambda^*/\Lambda]$, modular of weight 
$(1-s,0)$ with respect to the Weil representation.\end{definition}

\begin{theorem}\label{curve-weighted-count} Let $\Gamma\subset {\rm U}^*(\Lambda)$ be a finite
index subgroup. If $s>1$, or if $s=1$ and
 $\Gamma\backslash \mathbb{CH}^s$ is compact, then
 $$f_\Lambda(\tau):=[{\rm U}^*(\Lambda):\Gamma]\left(\frac{1}{2\pi i} \partial_\tau\right)^sg_\Lambda(\tau) = 
 c_0 + \sum_{\substack{v\in \Gamma\backslash \Lambda^* \\ v\cdot v>0}}
 \frac{1}{|{\rm Stab}_\Gamma(v)|}q^{\frac{1}{2}v \cdot v}e_{[v]}$$
and furthermore, this is a modular form of weight $1+s$ for ${\rm SL}_2(\Z)$,
valued in the Weil representation $\rho_\Lambda$.

If $s=1$ and $\Gamma\backslash \mathbb{CH}^s$ is non-compact,
$f_\Lambda(\tau)$ is a {\rm quasimodular form} of weight $2$ for ${\rm SL}_2(\Z)$,
valued in $\rho_\Lambda$---the transformation law for 
$\tau\mapsto -1/\tau$ is corrected by adding a factor of the form 
 $c/({\rm Im}\,\tau)$ for some $c\in \C[\Lambda^*/\Lambda]$. \end{theorem}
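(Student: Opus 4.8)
The plan is to run the signature-$(1,9)$ argument of \cite{es} while keeping track of the general rank. Because $\star$ is valued in $(1+\zeta_6)\Z[\zeta_6]$, the form $v\cdot w=\tfrac23{\rm Re}(v\star w)$ is even and integral, so $(\Lambda,\cdot)$ is an even $\Z$-lattice of signature $(2,2s)$; hence $\rho_\Lambda$ and $\Theta(p,\tau)$ are defined, $\Theta$ transforms with weight $(1,s)$ for $\rho_\Lambda$ in $\tau$, and is ${\rm U}^*(\Lambda)$-invariant in $p$ since ${\rm U}^*(\Lambda)\subset {\rm O}^*(\Lambda)$. First I would record that $g_\Lambda$ has weight $(1-s,0)$: under $\tau\mapsto -1/\tau$ the factor $({\rm Im}\,\tau)^s$ contributes $\tau^{-s}\overline{\tau}^{-s}$, which combines with the $(1,s)$-transformation of $\Theta$ to give $\tau^{1-s}\overline{\tau}^{0}\rho_\Lambda(S)$. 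Then Bol's identity---the formal fact that $D^s:=(\tfrac{1}{2\pi i}\partial_\tau)^s$ intertwines the weight-$(1-s,0)$ and weight-$(1+s,0)$ slash actions for any smooth function (here $2-k=1-s$, so $k=1+s$ and $D^{k-1}=D^s$)---shows at once that $f_\Lambda$ transforms with weight $(1+s,0)$ for $\rho_\Lambda$. This settles the transformation law, and it remains to prove holomorphicity, compute the $q$-expansion, and control growth.

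The heart is the Fourier expansion, which I would obtain by Rankin--Selberg unfolding. Writing $q^{\frac12 v^+\cdot v^+}\overline{q}^{-\frac12 v^-\cdot v^-}=e^{\pi i(v\cdot v)x}e^{-\pi(v\cdot v)_p\,y}$ with majorant $(v\cdot v)_p:=v^+\cdot v^+-v^-\cdot v^-\ge 0$, I group $\Lambda^*$ into ${\rm U}^*(\Lambda)$-orbits and unfold each orbit sum against the quotient integral (valid by absolute convergence when $s>1$, or $s=1$ with $\Gamma\backslash\mathbb{CH}^s$ compact) via
$$\int_{{\rm U}^*(\Lambda)\backslash\mathbb{CH}^s}\ \sum_{w\in {\rm U}^*(\Lambda)\cdot v} e^{-\pi(w\cdot w)_p\, y}\,dp=\frac{1}{|{\rm Stab}_{{\rm U}^*(\Lambda)}(v)|}\int_{\mathbb{CH}^s} e^{-\pi(v\cdot v)_p\, y}\,dp.$$
So each orbit contributes the archimedean integral $\mathcal I(v\cdot v,y):=\int_{\mathbb{CH}^s}e^{-\pi(v\cdot v)_p y}\,dp$, weighted by the reciprocal stabilizer (the $|Z({\rm U}^*(\Lambda))|$ factor and the normalization of $dp$ being arranged so that this weight is exactly the point stabilizer, matching Definition \ref{mass}). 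I would evaluate $\mathcal I$ as a Laplace integral: for $v\cdot v=\beta>0$ the majorant is minimized at the unique point $\C v\in\mathbb{CH}^s$, where it equals $\beta$, reducing $\mathcal I$ to an incomplete-Gamma integral with leading term $e^{-\pi\beta y}(\beta y)^{-s}$ times a universal constant. Applying $D^s$ modewise then finishes the computation: since $g_\Lambda$ is a harmonic Maass form of weight $1-s$ (a theta integral lies in the kernel of the weight-$(1-s)$ Laplacian), $D^{k-1}=D^s$ carries it to a weakly holomorphic form. Concretely, $D^s$ annihilates every Fourier mode of a vector with $v\cdot v\le 0$ and sends the mode of a vector with $v\cdot v=\beta>0$ to the holomorphic monomial $q^{\beta/2}$ with coefficient $1$. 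Inserting the index factor $[{\rm U}^*(\Lambda):\Gamma]$, which converts the ${\rm U}^*(\Lambda)$-orbit sum with ${\rm U}^*(\Lambda)$-stabilizers into the $\Gamma$-orbit sum with $\Gamma$-stabilizers, yields exactly the stated series.

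It then remains to identify $c_0$ and check growth. The term $v=0$ has vanishing majorant and contributes $({\rm Im}\,\tau)^s\,{\rm vol}({\rm U}^*(\Lambda)\backslash\mathbb{CH}^s)$ before differentiation and a genuine constant after $D^s$; this, with any boundary contribution, defines $c_0$. Since the expansion involves only non-negative powers of $q$, $f_\Lambda$ has bounded growth at the cusp $\infty$, and the vector-valued ${\rm SL}_2(\Z)$-equivariance propagates this to all cusps, so $f_\Lambda$ is a genuine holomorphic modular form of weight $1+s$ for $\rho_\Lambda$. The main obstacle is the archimedean analysis: verifying that $D^s$ produces the coefficient $1$ with no spurious constant and genuinely kills the $v\cdot v\le 0$ modes. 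I would settle this either by the explicit incomplete-Gamma evaluation of $\mathcal I$ together with the modewise action of $D^s$, or, more cleanly, by invoking the Bruinier--Ono--Rhoades description of $D^{k-1}$ on harmonic Maass forms, reducing the claim to a matching of principal parts.

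Finally, the case $s=1$ with $\Gamma\backslash\mathbb{CH}^1$ noncompact. Here $\mathbb{CH}^1$ is the hyperbolic plane and the quotient has a cusp at which the theta integrand fails to decay, so the defining integral for $g_\Lambda$ diverges and must be regularized---truncating $\mathbb{CH}^1$, subtracting the constant term of $\Theta$ along the cusp, and taking the limit, as in \cite{borcherds}. This regularization is the one step that breaks the exact $S$-transformation: the subtracted cuspidal piece contributes an anomaly proportional to $1/{\rm Im}\,\tau$, so $g_\Lambda$ misses being weight $0$ by such a term and $f_\Lambda=(\tfrac{1}{2\pi i}\partial_\tau)g_\Lambda$ becomes quasimodular of weight $2$, its $S$-transformation corrected by a factor $c/{\rm Im}\,\tau$ with explicit $c\in\C[\Lambda^*/\Lambda]$. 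This is exactly the mechanism producing the quasimodular Eisenstein series $E_2$, matching the $E_2(q^k)$ entries of Table \ref{bistrata}; pinning down $c$ and the precise regularized value is the secondary technical point of this case.
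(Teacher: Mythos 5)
Your proposal follows essentially the same route as the paper's (sketched) proof: integrate the Siegel theta function over the ball quotient to obtain a Maass form of weight $1-s$ valued in $\rho_\Lambda$, apply the differential operator $s$ times (Bol's identity) to land in weight $1+s$, compute the Fourier coefficients by unfolding the orbit sums, and treat the $s=1$ non-compact case separately, where the failure of absolute convergence near the cusp produces the $c/({\rm Im}\,\tau)$ anomaly. The paper defers the archimedean details to \cite{es} and \cite{Kudla:1990aa} exactly as you propose to, so your write-up is a correct and somewhat more explicit rendering of the same argument.
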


\begin{proof}[Sketch.] This is essentially due to Siegel \cite{siegel}, and is proved in 
greater generality by Kudla and Millson in \cite{Kudla:1990aa}; the details are
worked out in the unimodular case for $s>1$
in \cite{es}. The vector-valued case follows the same proof. As described in 
the proof summary after Theorem \ref{weighted-count}, the integral $g_\Lambda(\tau)$
of the Siegel theta function can be shown to be a Maass form of weight $1-s$ (at least
when $s>1$) valued in the Weil representation. Applying the raising operator
$s$ times gives a modular form of weight $1+s$, and a direct computation
of the Fourier coefficients verifies the second equality in Theorem \ref{curve-weighted-count}.
The connection to \cite{Kudla:1990aa} is that Kudla-Millson's theta kernel is the result
of applying the raising operator $s$ times to the integrand $\Theta(p,\tau)$. 

The only difficulty arises in the $s=1$ case, see \cite[bottom of p.~18]{es}.
Essentially, if $\Gamma\backslash \mathbb{CH}^s$ is $1$-dimensional and has a cusp, there is 
only conditional convergence of the integral defining $g_\Lambda(\tau)$. Then,
the summation over $v\in \Lambda$ and integration over $\Gamma\backslash \mathbb{CH}^s$
fail to commute, and a more careful analysis leads to the extra factor of $c/({\rm Im}\,\tau)$.
%Possible a citation to Kudla?
 \end{proof}
 
 \begin{proposition}\label{modular-level} Let $f\colon \mathbb{H}\to \C[\Lambda^*/\Lambda]$ be a modular (resp. quasimodular)
 form valued in the Weil representation $\rho_\Lambda$ of an even $\Z$-lattice $\Lambda$ of even dimension.
 Let $N$ be an integer for which $\Lambda^*/\Lambda$ is $N$-torsion. Then, the $e_\gamma$-coefficient 
 of $f(\tau)$ is a  modular (resp. quasimodular) form for $\Gamma(N)$, for any $\gamma\in \Lambda^*/\Lambda$.
 \end{proposition}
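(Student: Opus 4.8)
The plan is to unwind the definition of the Weil representation on the generators $S$ and $T$ of $\mathrm{SL}_2(\Z)$ and read off how each individual coefficient transforms under the principal congruence subgroup $\Gamma(N)$. Recall that $f(\tau)$ being modular of weight $(r,s)$ for the representation $\rho_\Lambda$ means $f(\gamma\cdot\tau) = (c\tau+d)^r(c\bar\tau+d)^s\rho_\Lambda(\gamma)\cdot f(\tau)$ for all $\gamma\in\mathrm{SL}_2(\Z)$. Writing $f(\tau) = \sum_{\gamma\in\Lambda^*/\Lambda} f_\gamma(\tau)\,e_\gamma$, the claim is precisely that each scalar component $f_\gamma$ is (quasi)modular of the same weight for the subgroup $\Gamma(N)$. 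So the entire statement reduces to showing that $\rho_\Lambda$ restricted to $\Gamma(N)$ acts by a scalar (in fact trivially, up to a root of unity that I expect to be absorbed) on each basis vector $e_\gamma$, i.e.\ that $\Gamma(N)$ lies in the kernel of the projectivized Weil representation, or at least acts diagonally in the standard basis.

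First I would record the action on the two generators. On $T = \left(\begin{smallmatrix}1&1\\0&1\end{smallmatrix}\right)$ we have $\rho_\Lambda(T)e_\gamma = e^{\pi i(\gamma\cdot\gamma)}e_\gamma$, which is already diagonal, so $\rho_\Lambda(T^N)e_\gamma = e^{\pi i N(\gamma\cdot\gamma)}e_\gamma$. Because $\Lambda$ is even and $\Lambda^*/\Lambda$ is $N$-torsion, the quadratic form $\gamma\cdot\gamma$ takes values in $\tfrac{1}{N}\Z$ modulo $2\Z$ (the discriminant form is $\Q/2\Z$-valued and $N$-torsion forces denominators dividing $N$, actually $N^2$, so I would want to pin down the exact exponent), and hence $e^{\pi i N(\gamma\cdot\gamma)}$ is a fixed root of unity independent of which element of $\Gamma(N)$ we use. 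The key point is that $T^N\in\Gamma(N)$ generates, together with $S$-conjugates, a good portion of $\Gamma(N)$, and I would combine these to see the whole action.

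The cleaner route, which I would actually carry out, is to invoke the standard fact that $\rho_\Lambda$ factors through the finite quotient $\mathrm{SL}_2(\Z/N'\Z)$ for an appropriate level $N'$ (this is classical for the Weil representation---see the transformation formulas, where only $\gamma\cdot\delta \bmod 1$ and $\gamma\cdot\gamma\bmod 2$ enter, both controlled by the $N$-torsion of the discriminant group). Once $\rho_\Lambda$ descends to a finite quotient, I would show that $\Gamma(N)$ maps into the kernel of the relevant finite representation acting diagonally; concretely, since $S$ and $T$ generate, one checks that any $\gamma = \left(\begin{smallmatrix}a&b\\c&d\end{smallmatrix}\right)$ congruent to the identity mod $N$ acts on $\C[\Lambda^*/\Lambda]$ preserving each $\C e_\delta$ up to a uniform scalar. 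Substituting such a $\gamma$ into the defining functional equation $f(\gamma\cdot\tau)=(c\tau+d)^r(c\bar\tau+d)^s\rho_\Lambda(\gamma)f(\tau)$ and extracting the $e_\gamma$-coefficient then yields exactly the transformation law of a scalar (quasi)modular form of weight $(r,s)$ for $\Gamma(N)$, with the quasimodular correction term $c/(\mathrm{Im}\,\tau)$ passing through componentwise since it is itself $\C[\Lambda^*/\Lambda]$-valued.

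The main obstacle, and the step deserving care, is verifying that $\rho_\Lambda\!\mid_{\Gamma(N)}$ is genuinely diagonal (scalar on each $e_\gamma$) rather than merely factoring through a finite group. The diagonality on $T$-powers is immediate, but the generic element of $\Gamma(N)$ is a word in $S$ and $T$, and $\rho_\Lambda(S)$ is a full Fourier-transform-type matrix that is \emph{not} diagonal. What saves the argument is that the product structure reassembles into a diagonal action on $\Gamma(N)$ precisely because $\Gamma(N)$ is the kernel of reduction mod $N$ and the Weil representation's dependence on $\gamma$ factors through that reduction together with a diagonal twist; I would therefore either cite the well-known level-$N$ property of the Weil representation (e.g.\ from Borcherds \cite{borcherds} or Nobs--Wolfart) or give the short direct check that $e^{\pi i(\gamma\cdot\gamma)}$ and the $S$-matrix entries $e^{-2\pi i(\gamma\cdot\delta)}$ depend only on residues mod $N$, which forces $\Gamma(N)$ to act by the identity character on the standard basis. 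Establishing that precise statement is the crux; everything else is formal substitution into the functional equation.
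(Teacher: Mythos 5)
Your proposal is correct and follows essentially the same route as the paper, whose entire proof is the single citation that the Weil representation factors through $\mathrm{SL}_2(\Z/L\Z)$ for $L$ the level of the lattice (Ebeling, Thm.~3.2); once $\Gamma(N)$ acts trivially, extracting the $e_\gamma$-coefficient of the functional equation is, as you say, purely formal. Just be aware that the real content lives in that citation rather than in your proposed ``direct check'': inspecting $\rho_\Lambda(S)$ and $\rho_\Lambda(T)$ alone does not suffice, since a general element of $\Gamma(N)$ is a long word in $S,T$ and $\rho_\Lambda(S)$ is not diagonal, so one needs the closed formula for $\rho_\Lambda(\gamma)$ at arbitrary $\gamma$ (depending only on $\gamma$ mod the level) --- which is exactly what the classical level property you offer to cite provides.
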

% \textcolor{blue}{And if $\gamma = 0$, then it's modular for a bigger group}
% 
 \begin{proof} The {\it level} of $(\Lambda,\cdot)$ is the smallest positive integer $L$
 for which $L\Lambda^*$ is an even lattice. It is well-known, see e.g.~\cite[Thm.~3.2]{ebeling}, 
 that the Weil representation $\rho_\Lambda\colon {\rm SL}_2(\Z)\to 
 {\rm GL}(\C[\Lambda^*/\Lambda])$ factors through ${\rm SL}_2(\Z/L\Z)$.
% 
% Observe that in the Weil representation, $\rho_\Lambda(T)$ acts by a diagonal matrix
% whose $e_\gamma$ entry is $e^{\pi i (\gamma\cdot \gamma)}$. Thus, $T^N$ acts diagonally by 
% $e^{\pi i N(\gamma\cdot \gamma)}$ which equals $1$ because $\Lambda$ is even. 
% 
 \end{proof}

\begin{definition} Let $\kappa = \{\kappa_1,\dots,\kappa_n\}$ be a curvature profile. Define a finite group 
$$\Delta_{\kappa_i}:=\threepartdef{0}{\kappa_i=1,5,}{\Z/3\Z}{\kappa_i=2,4,}{\Z/2\Z \oplus \Z/2\Z}{\kappa_i=3}$$
and $\Delta_\kappa := \bigoplus_i \Delta_{\kappa_i}$. 
\end{definition}

An {\it Eisenstein structure} on a $\Z$-lattice is an order $3$ isometry whose only fixed point is $0$.
Such an isometry naturally endows the $\Z$-lattice with the structure of a free module over
the Eisenstein integers $\Z[\zeta_3]=\Z[\zeta_6]$, whose rank is half of the $\Z$-rank.
There are Eisenstein structures on
$\{0\}$, $A_2$, $D_4$, $E_6$, $E_8$ and we call the resulting Eisenstein lattices
$$A_0^{\Z[\zeta_6]},\,A_1^{\Z[\zeta_6]}, \,A_2^{\Z[\zeta_6]}, \,A_3^{\Z[\zeta_6]}, \,A_4^{\Z[\zeta_6]}$$ because
their Dynkin diagrams over $\Z[\zeta_6]$ have $A_n$ diagram shape:
There are $n$ generators $\{\alpha_i\}$
over $\Z[\zeta_6]$, satisfying \begin{align*} \alpha_j\star\alpha_j&=-3 \\ \alpha_j\star\alpha_{j+1} 
&= 1+\zeta_6\textrm{ for }j=1,\dots,n-1 \\  \alpha_j\star\alpha_k &= 0\textrm{ for }|j-k|\geq 2.\end{align*}
See \cite[Thm.~3]{allcock-y555} for the classification of Eisenstein (root) lattices.

We note that
the lattice $A_{10}^{\Z[\zeta_6]}$ is none other than $\Lambda$, by \cite[Sec.~5]{allcock}.
A more symmetrical diagram is a circle with $12$ nodes, see Figure \ref{dynkin},
which corresponds to presentation $\bigoplus_{i=1}^{12} \Z[\zeta_6]\alpha_i \twoheadrightarrow \Lambda$
whose kernel is a $2$-dimensional null space over $\Z[\zeta_6]$. We give a geometrical interpretation below.

\begin{figure}
\includegraphics[width=1.5in]{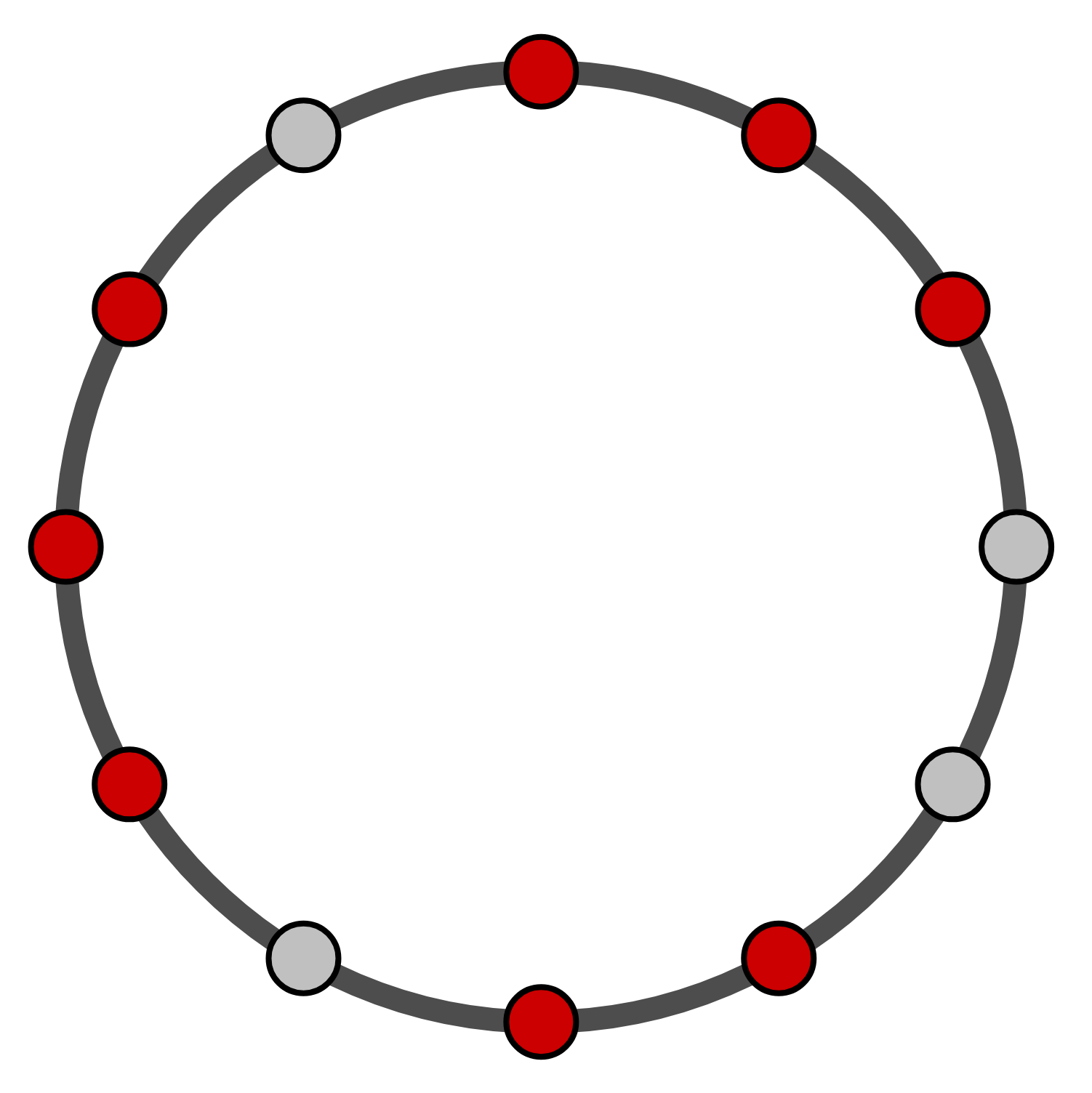}
\caption{Dynkin diagram for $\Lambda$, with 
subdiagram marked in red corresponding to curvature
profile $\kappa = \{4,4,3,1\}$.}
\label{dynkin}
\end{figure}

The discriminant groups of their underlying
 $\Z$-lattices $\{0\}$, $A_2$, $D_4$, $E_6$, $E_8$ are exactly the groups
 $\Delta_{\kappa_i}$ for $\kappa_i=1,2,3,4,5$ respectively. As the discriminant
 of an even lattice, $\Delta_{\kappa_i}$ has a $\Q/2\Z$-valued quadratic form, 
 descended from the extension of $\cdot$ to $\Lambda^*$.
 
\begin{proposition}\label{discriminant-group} Let $\Lambda_\kappa\subset \Lambda$
be the sublattice corresponding to convex
triangulations with curvature profile $\kappa$ (cf. Proposition \ref{curve-sublattice}).
The disciminant group  $\Lambda_\kappa^*/\Lambda_\kappa$ of the underlying
$\Z$-lattice $(\Lambda_\kappa,\cdot)$ is a subquotient $I^\perp/I$ associated to
an isotropic (possibly zero) subspace $I\subset \Delta_\kappa$ for the $\Q/2\Z$-valued
quadratic form on $\Delta_\kappa$.
\end{proposition}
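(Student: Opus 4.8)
The plan is to realize $\Lambda_\kappa$ as the orthogonal complement in $\Lambda$ of an explicit negative-definite root sublattice, and then to read off its discriminant group from Nikulin's gluing theory for the even unimodular lattice $(\Lambda,\cdot)\cong {\rm II}_{2,18}$.

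First I would identify $\Lambda_\kappa$ with an orthogonal complement. By Proposition \ref{curve-sublattice}, $\Lambda_\kappa=\bigcap_i\Lambda_{\kappa_i}$ is cut out by the vanishing of the periods $\int_{p_j}^{p_{j+1}}\omega$ along the edges joining consecutive singularities inside each coalescing cluster. Each such period is a Hermitian pairing $v\star\gamma$ against the vanishing cycle $\gamma$ of the corresponding edge, and the $\kappa_i-1$ edges within a cluster of size $\kappa_i$ form a chain of roots with $\gamma\star\gamma=-3$ and consecutive pairing $1+\zeta_6$; that is, a copy of $A_{\kappa_i-1}^{\Z[\zeta_6]}$ sitting inside $\Lambda$—these are exactly the red subdiagrams of the circular Dynkin diagram of $\Lambda$ in Figure \ref{dynkin}. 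Setting $R_\kappa:=\bigoplus_i A_{\kappa_i-1}^{\Z[\zeta_6]}$, the defining conditions become $v\star\gamma=0$ for all $\gamma\in R_\kappa$, so $\Lambda_\kappa=R_\kappa^{\perp}$. Because $v\cdot w=\tfrac23{\rm Re}(v\star w)$ and $R_\kappa$ is a $\Z[\zeta_6]$-submodule, this $\Z[\zeta_6]$-orthogonal complement agrees with the $\Z$-orthogonal complement of the underlying $\Z$-lattices. The underlying $\Z$-lattice of $R_\kappa$ is $\bigoplus_i L_{\kappa_i}$ with $L_{\kappa_i}\in\{0,A_2,D_4,E_6,E_8\}$, whose discriminant group is precisely $\Delta_\kappa=\bigoplus_i\Delta_{\kappa_i}$.

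Next I would pass to the saturation $\overline{R}_\kappa:=(R_\kappa\otimes\Q)\cap\Lambda$, a primitively embedded even sublattice with the same orthogonal complement $\overline{R}_\kappa^{\perp}=R_\kappa^{\perp}=\Lambda_\kappa$, since the complement only sees the $\Q$-span. Viewed as an even overlattice $R_\kappa\subset\overline{R}_\kappa\subset R_\kappa^{*}$, the subgroup $I:=\overline{R}_\kappa/R_\kappa\subset R_\kappa^{*}/R_\kappa=\Delta_\kappa$ is isotropic for the $\Q/2\Z$-valued discriminant form, and the standard overlattice computation gives $\overline{R}_\kappa^{*}/\overline{R}_\kappa\cong I^{\perp}/I$. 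Finally, since $(\Lambda,\cdot)$ is even unimodular and $\overline{R}_\kappa$ is primitive, the Nikulin isomorphism between discriminant groups of orthogonal complements in a unimodular lattice (see e.g.\ \cite{ebeling}) yields $\Lambda_\kappa^{*}/\Lambda_\kappa=(\overline{R}_\kappa^{\perp})^{*}/\overline{R}_\kappa^{\perp}\cong\overline{R}_\kappa^{*}/\overline{R}_\kappa\cong I^{\perp}/I$. The overall sign by which Nikulin reverses the quadratic form does not affect isotropy, so $\Lambda_\kappa^{*}/\Lambda_\kappa$ is the subquotient $I^{\perp}/I$ of $\Delta_\kappa$ attached to an isotropic $I$, as claimed.

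I expect the one genuinely geometric step—identifying the vanishing cycles of the coalescing edges with the roots of $A_{\kappa_i-1}^{\Z[\zeta_6]}$, and hence $\Lambda_\kappa$ with $R_\kappa^{\perp}$—to be the main obstacle; everything afterward is routine discriminant-form bookkeeping. A secondary check is that the $\Q/2\Z$-form the paper places on $\Delta_\kappa$ coincides, up to the harmless overall sign, with the discriminant form of $R_\kappa$, so that the isotropic subgroup $I$ and the subquotient $I^{\perp}/I$ are computed with respect to the intended form.
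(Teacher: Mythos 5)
Your proposal follows essentially the same route as the paper: both identify $\Lambda_\kappa$ as the orthogonal complement of an embedded copy of $\bigoplus_i A^{\Z[\zeta_6]}_{\kappa_i-1}$ (whose underlying $\Z$-lattices are $0, A_2, D_4, E_6, E_8$ with discriminant groups $\Delta_{\kappa_i}$), account for the possible non-primitivity of this embedding via saturation, and then invoke the canonical isomorphism of discriminant groups for mutually orthogonal saturated sublattices of the unimodular lattice $(\Lambda,\cdot)$ to exhibit $\Lambda_\kappa^*/\Lambda_\kappa$ as $I^\perp/I$. The only difference is one of emphasis—you spell out the overlattice/gluing bookkeeping more explicitly, while the paper spends more effort justifying the root-lattice identification via the braid monodromy (triflections and commutation of half-twists)—but the argument is the same.
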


\begin{proof} In the proof of Proposition \ref{curve-sublattice}, we construct
$\Lambda_\kappa$ as the intersection $\cap_i \Lambda_{\kappa_i}$
where $\Lambda_{\kappa_i}$ is the sublattice
for which $\kappa_i-1$ periods $\int_{p_j}^{p_{j+1}} \omega$ equal zero.

In fact, we claim $\Lambda_{\kappa_i}= (A^{\Z[\zeta_6]}_{\kappa_i-1})^\perp$ for an
embedded copy of $A^{\Z[\zeta_6]}_{\kappa_i-1}\subset \Lambda$.
The braid half-twisting $p_j, p_{j+1}$ maps to a triflection in ${\rm U}(\Lambda)$
along an Eisenstein root $\alpha_j$ satisfying $\alpha_j\star \alpha_j=-3$. Thus, the locus where
$p_j, p_{j+1}$ have collided is the perpendicular of $\alpha_j$. Since the half-twists of $p_j, p_{j+1}$
and $p_k, p_{k+1}$ commute unless $|j-k|\leq 1$, the corresponding triflections in ${\rm U}(\Lambda)$
commute, and so $\alpha_j\star\alpha_k=0$ for $|j-k|\geq 2$.
Analyzing a braid cyclically rotating $p_j,p_{j+1},p_{j+2}$ gives the formula
$\alpha_j\star \alpha_{j+1}=1+\zeta_6$. 

We conclude that $$\Lambda_\kappa = \left(\textstyle \bigoplus_i A^{\Z[\zeta_6]}_{\kappa_i-1}\right)^\perp$$
for some embedding $\phi\colon \bigoplus_i A^{\Z[\zeta_6]}_{\kappa_i-1}\hookrightarrow \Lambda$.
The Dynkin diagram in Figure \ref{dynkin}
can be understood geometrically, by placing the $12$ cone points on the equator of the sphere,
and taking $\alpha_j$ as the root whose triflection is the monodromy of the
oriented half-twist of two adjacent points on this equator.
This picture also clarifies that the embedding $\phi$
is constructed by embedding the $A_{\kappa_i-1}$ Dynkin diagrams
successively along the circle.

The Eisenstein root lattices for each coalescence are perpendicular to each other, because
they involve braiding disjoint collections of singularities. In general, $\phi$ may not
 be a primitive embedding, and $\Lambda_\kappa^\perp$ is its saturation.
 
 Finally, observe
 that $\Lambda_\kappa^\perp$ and $\Lambda_\kappa$ are mutually perpendicular, saturated
 sublattices of the unimodular $\Z$-lattice $(\Lambda,\cdot)$. So their discriminant groups
 are canonically isomorphic, via the isomorphisms $$
{\rm Disc}(\Lambda_\kappa,\cdot) \leftarrow \frac{\Lambda}{\Lambda_\kappa\oplus \Lambda_{\kappa}^\perp} \rightarrow 
{\rm Disc}(\Lambda_\kappa^\perp,\cdot).$$ Thus, the discriminant of $(\Lambda_\kappa,\cdot)$ is the subquotient
 associated to the isotropic subspace $I={\rm im}\,\Lambda_\kappa^\perp \subset \Delta_\kappa$ of the discriminant of
 $\bigoplus_i (A^{\Z[\zeta_6]}_{\kappa_i-1},\cdot)$.  \end{proof}
 
 \begin{corollary}\label{level} The discriminant group ${\rm Disc}(\Lambda_\kappa,\cdot)$ is:
 \begin{enumerate}
 \item $1$-torsion if $\kappa_i\in \{1,5\}$ for all $i$,
 \item $2$-torsion if $\kappa_i\in \{1,3,5\}$ for all $i$,
 \item $3$-torsion if $\kappa_i\in \{1,2,4,5\}$ for all $i$,
 \item $6$-torsion if $\kappa_i\in \{1,2,3,4,5\}$ for all $i$.
 \end{enumerate} \end{corollary}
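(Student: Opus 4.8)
The plan is to reduce the whole statement to Proposition \ref{discriminant-group}, which already identifies ${\rm Disc}(\Lambda_\kappa,\cdot)$ with a subquotient $I^\perp/I$ of the finite abelian group $\Delta_\kappa=\bigoplus_i \Delta_{\kappa_i}$ for some isotropic subspace $I$. The only elementary fact I would invoke is that the exponent of a subquotient divides the exponent of the ambient group: if $\Delta_\kappa$ is $N$-torsion then so is the subgroup $I^\perp\subset\Delta_\kappa$, and hence so is its quotient $I^\perp/I$. Thus it suffices to bound the exponent of $\Delta_\kappa$ itself, and the particular isotropic subspace $I$ plays no role for this purpose.

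Next I would compute the exponent of $\Delta_\kappa$ as the least common multiple of the exponents of its summands $\Delta_{\kappa_i}$. These are read off directly from the definition of $\Delta_{\kappa_i}$: the summand is trivial (exponent $1$) when $\kappa_i\in\{1,5\}$, it is $\Z/3\Z$ (exponent $3$) when $\kappa_i\in\{2,4\}$, and it is $\Z/2\Z\oplus\Z/2\Z$ (exponent $2$) when $\kappa_i=3$. The four cases then follow by inspection of the least common multiple: if every $\kappa_i\in\{1,5\}$ then $\Delta_\kappa=0$ and the discriminant group is $1$-torsion; if every $\kappa_i\in\{1,3,5\}$ each summand has exponent $1$ or $2$, giving $2$-torsion; if every $\kappa_i\in\{1,2,4,5\}$ each summand has exponent $1$ or $3$, giving $3$-torsion; and in general every summand has exponent dividing $6$, giving $6$-torsion. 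In each case the stated bound is inherited by the subquotient ${\rm Disc}(\Lambda_\kappa,\cdot)$.

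I do not expect a genuine obstacle here: all of the geometric and lattice-theoretic content has already been extracted in Proposition \ref{discriminant-group}, and what remains is the bookkeeping above together with the elementary observation on subquotient exponents. The one point deserving a sentence of care is that these are bounds on the exponent, i.e.~the torsion relevant to Proposition \ref{modular-level}, and they need not be sharp, since passing to the isotropic subquotient $I^\perp/I$ can lower the exponent further.
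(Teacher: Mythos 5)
Your proposal is correct and is exactly the argument the paper intends: the corollary is stated without proof as an immediate consequence of Proposition \ref{discriminant-group}, since a subquotient $I^\perp/I$ has exponent dividing that of $\Delta_\kappa=\bigoplus_i\Delta_{\kappa_i}$, and the exponents of the summands ($1$, $3$, $2$ for $\kappa_i\in\{1,5\}$, $\{2,4\}$, $\{3\}$ respectively) give the stated least common multiples. Your added remark that these bounds need not be sharp is also consistent with how the paper uses them (only as torsion bounds feeding into Proposition \ref{modular-level}).
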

 
Define $L(\kappa)\in \{1,2,3,6\}$ to be the above integer, for which $\Lambda_\kappa$
 is automatically $L(\kappa)$-torsion.

\begin{theorem}\label{weighted-kappa} The weighted generating function $f_\kappa(q)$ for triangulations
in the closure of the curvature stratum associated to $\kappa$ is a
(quasi)modular form for $\Gamma_1(L(\kappa))$
of weight ${\rm len}(\kappa)-2$. \end{theorem}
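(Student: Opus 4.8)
The plan is to realize $f_\kappa(q)$ as a single component of a vector-valued modular form and then extract its scalar transformation law. First I would apply Theorem \ref{curve-weighted-count} to the Eisenstein sublattice $\Lambda_\kappa\subset\Lambda$ of signature $(1,s)$ with $s={\rm len}(\kappa)-3$ furnished by Proposition \ref{curve-sublattice}, taking $\Gamma=\Gamma_\kappa$. This first requires checking the hypotheses for $\Lambda_\kappa$: that the restricted form $\star$ carries the normalization making $(\Lambda_\kappa,\cdot)$ the even $\Z$-lattice that drives the Siegel theta machinery, and that $\Gamma_\kappa$ acts trivially on the discriminant group $\Lambda_\kappa^*/\Lambda_\kappa$, i.e.\ $\Gamma_\kappa\subseteq{\rm U}^*(\Lambda_\kappa)$, so that the class $[v]$ is constant on $\Gamma_\kappa$-orbits and the vector-valued sum is well defined. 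Granting this, Theorem \ref{curve-weighted-count} produces a (quasi)modular form $f_{\Lambda_\kappa}(\tau)$, valued in the Weil representation $\rho_{\Lambda_\kappa}$, of weight $1+s={\rm len}(\kappa)-2$, which is exactly the claimed weight.

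Next I would identify $f_\kappa(q)$ with the $e_0$-component of $f_{\Lambda_\kappa}(\tau)$. In the series $c_0+\sum_{v}|{\rm Stab}_{\Gamma_\kappa}(v)|^{-1}q^{\frac12 v\cdot v}e_{[v]}$ of Theorem \ref{curve-weighted-count}, the coefficient of $e_0$ collects precisely those $v$ with $[v]=0$, namely the genuine lattice vectors $v\in\Lambda_\kappa^+$, which by Theorem \ref{thm:Thurston} are the triangulations in the closure of the stratum $\kappa$. Since $v\cdot v=\tfrac23(v\star v)$ while the number of triangles is $\tfrac23(v\star v)$, the exponent $\tfrac12 v\cdot v$ equals half the triangle count, matching the $q^n$ convention of Definition \ref{completed-gen}. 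Hence the $e_0$-component of $f_{\Lambda_\kappa}$ is exactly $f_\kappa(q)$, including its constant term.

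It remains to pin down the scalar transformation law of this one component. By Corollary \ref{level}, $\Lambda_\kappa^*/\Lambda_\kappa$ is $L(\kappa)$-torsion, so Proposition \ref{modular-level} applies with $N=L(\kappa)$ and shows that $f_\kappa$ is a (quasi)modular form for $\Gamma(L(\kappa))$. To upgrade $\Gamma(L(\kappa))$ to $\Gamma_1(L(\kappa))$ I would use the diagonal action of $T$: since $\rho_{\Lambda_\kappa}(T)e_\gamma=e^{\pi i(\gamma\cdot\gamma)}e_\gamma$ and $0\cdot 0=0$, the operator $\rho_{\Lambda_\kappa}(T)$ fixes $e_0$, and because $T$ has automorphy factor $1$ the $e_0$-component satisfies $f_\kappa(\tau+1)=f_\kappa(\tau)$. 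Finally $\Gamma_1(L(\kappa))=\langle\Gamma(L(\kappa)),T\rangle$, since any $\gamma\in\Gamma_1(N)$ is congruent mod $N$ to $T^b$ for some $b$, whence $\gamma T^{-b}\in\Gamma(N)$; as the weight-$k$ slash action is a group homomorphism, invariance under the generators $\Gamma(L(\kappa))$ and $T$ propagates to all of $\Gamma_1(L(\kappa))$. The quasimodular case is exactly $s=1$, i.e.\ ${\rm len}(\kappa)=4$ with the non-compact alternative of Theorem \ref{curve-weighted-count}, where the weight is $2={\rm len}(\kappa)-2$.

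The main obstacle is the setup in the first paragraph rather than the modular-forms bookkeeping: one must confirm $\Gamma_\kappa\subseteq{\rm U}^*(\Lambda_\kappa)$ and that the normalization of $\star$ on $\Lambda_\kappa$ feeds correctly into Theorem \ref{curve-weighted-count}, so that both the $q$-exponents and the even $\Z$-lattice structure come out as required. Once $f_\kappa$ is correctly realized as a component of a $\rho_{\Lambda_\kappa}$-valued form, the rest is the clean extraction of the $e_0$-coefficient together with the purely group-theoretic passage from $\Gamma(L(\kappa))$ to $\Gamma_1(L(\kappa))$, whose only genuine input is the $T$-invariance of $e_0$.
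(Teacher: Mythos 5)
Your proposal follows the same route as the paper: extract the $e_0$-coefficient of the vector-valued form $f_{\Lambda_\kappa}$ from Theorem \ref{curve-weighted-count}, get the weight $1+s={\rm len}(\kappa)-2$, get the level from Corollary \ref{level} and Proposition \ref{modular-level}, and upgrade from $\Gamma(L(\kappa))$ to $\Gamma_1(L(\kappa))$. Your argument for the last step --- $\rho_{\Lambda_\kappa}(T)$ fixes $e_0$ and $\Gamma_1(N)=\langle\Gamma(N),T\rangle$ --- is correct and is exactly the alternative the paper mentions (citing Scheithauer); the paper's primary justification, that $f_\kappa$ is a power series in integral powers of $q$, is equivalent in effect.

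The one genuine gap is the point you yourself flag as ``the main obstacle'': you apply Theorem \ref{curve-weighted-count} with $\Gamma=\Gamma_\kappa$, which requires $\Gamma_\kappa\subseteq {\rm U}^*(\Lambda_\kappa)$, and you leave this containment unverified. There is no reason for it to hold --- $\Gamma_\kappa$ can act nontrivially on the discriminant group ${\rm Disc}(\Lambda_\kappa,\cdot)$, which by Proposition \ref{discriminant-group} is generally nonzero. The paper sidesteps this entirely: it applies Theorem \ref{curve-weighted-count} to $\Gamma:={\rm U}^*(\Lambda_\kappa)\cap\Gamma_\kappa$, which is a finite-index subgroup of both, and then writes
$$f_\kappa(q)=\frac{1}{[\Gamma_\kappa:\Gamma]}\cdot[e_0]\,f_{\Lambda_\kappa}(q),$$
using the fact (noted in the introduction) that a stabilizer-weighted orbit count scales by exactly the index when one passes to a finite-index subgroup. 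Since a scalar multiple does not affect modularity, weight, or level, this closes the gap; but as written your identification of $f_\kappa$ with the $e_0$-component ``including its constant term'' is only correct up to this index factor, and the vector-valued form you want to extract from is not even defined for $\Gamma_\kappa$ itself unless the containment holds. Everything else in your write-up --- the matching of $q$-exponents via $v\cdot v=\tfrac{2}{3}(v\star v)$ equalling the triangle count, the identification of the $e_0$-component with vectors of $\Lambda_\kappa$, and the quasimodular caveat when $s=1$ --- is accurate.
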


\begin{proof} This follows directly from Corollary \ref{level}, Proposition \ref{modular-level}, and
Theorem \ref{curve-weighted-count} applied to the $e_0$-coefficient, as
$$f_\kappa(q) = \frac{1}{[\Gamma_\kappa : \Gamma]}\cdot [e_0]\, f_{\Lambda_\kappa}(q).$$
Here $\Gamma= {\rm U}^*(\Lambda_\kappa)\cap \Gamma_\kappa$.  Note that we get the congruence
subgroup $\Gamma_1(L(\kappa))$ rather than $\Gamma(L(\kappa))$ because
$f_\kappa(\tau)$ is automatically a power series in $q$, rather than a fractional power of $q$.
See also Proposition \ref{substitute}.
Alternatively, $\Gamma_1(L(\kappa))$ preserves $e_0$ by \cite[Prop.~4.5]{scheithauer}.
\end{proof}

\begin{remark} This sets the constant coefficient of $f_\kappa(q)$, which was left
ambiguous in Definition \ref{completed-gen}, to be
$$c_0 = \frac{d!{\rm Vol}(\mathbb{P}\Gamma_\kappa\backslash \mathbb{CH}^d)}{6(4\pi)^d}$$
where $d={\rm len}(\kappa)-3$ is the dimension.
\end{remark}

Finally, we extend these results to the enumeration
of orbitriangulations with decorated profile $\mu$.

\begin{proposition} The Eisenstein module $\Lambda_\mu\supset \Lambda_{\mu^u}$ counting
orbitriangulations (cf. Proposition \ref{curve-sublattice}) corresponds to a partial saturation of
$\Lambda_{\mu^u}$ in $\Lambda_{\mu^u}^*$, saturating each summand
of $\Delta_\mu = \bigoplus \Delta_{\mu_i}$ with a decoration $3^{\bf e}$ or $4^{\bf f}$. \end{proposition}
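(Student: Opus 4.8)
The plan is to exhibit both $\Lambda_\mu$ and the asserted partial saturation as the preimage in $\Lambda_{\mu^u}^*$ of one and the same subgroup of the discriminant group $\mathrm{Disc}(\Lambda_{\mu^u}) = \Lambda_{\mu^u}^*/\Lambda_{\mu^u}$. Recall from Proposition \ref{curve-sublattice} that $\Lambda_\mu$ is the finite-index enlargement of $\Lambda_{\mu^u}$ cut out by the weak period conditions \eqref{weak-ints}, and from Proposition \ref{discriminant-group} that $\Lambda_{\mu^u} = \tilde R^\perp$ inside the unimodular $\Z$-lattice $(\Lambda,\cdot)$, where $\tilde R$ is the saturation of $R = \bigoplus_i A_{\mu_i-1}^{\Z[\zeta_6]}$. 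The gluing isomorphism for mutually perpendicular saturated sublattices of a unimodular lattice gives $\mathrm{Disc}(\Lambda_{\mu^u}) \cong \mathrm{Disc}(\tilde R) = I^\perp/I$, a subquotient of $\Delta_\mu = \bigoplus_i \Delta_{\mu_i}$, where $\Delta_{\mu_i}$ is the discriminant of $A_{\mu_i-1}^{\Z[\zeta_6]}$---equal to $E_6^*/E_6 \cong \Z/3$ when $\mu_i = 4$ and to $D_4^*/D_4 \cong (\Z/2)^2$ when $\mu_i = 3$. The goal is then to identify $H := \Lambda_\mu/\Lambda_{\mu^u}$ with the image in $I^\perp/I$ of $\bigoplus_{j\text{ decorated}} \Delta_{\mu_j}$.

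First I would reduce to a single decorated point. The conditions \eqref{weak-ints} constrain each external period $\int_{p_1}^{p_j}\omega$ separately, so $H$ splits as a direct sum of local subgroups $H_j$, one per decorated index $j$. Since $3$ ramifies and $2$ is inert in $\Z[\zeta_6]$, for a $4^{\bf f}$ index the local group is $H_j = \tfrac{1}{1+\zeta_6}\Z[\zeta_6]/\Z[\zeta_6] \cong \Z/3$, and for a $3^{\bf e}$ index it is $H_j = \tfrac12\Z[\zeta_6]/\Z[\zeta_6] \cong (\Z/2)^2$; these have orders $3$ and $4$, matching $|E_6^*/E_6|$ and $|D_4^*/D_4|$. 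Geometrically these cosets are exactly the admissible special positions of the cone point: the three classes vertex / up-triangle-center / down-triangle-center generating $\tfrac{1}{1+\zeta_6}\Z[\zeta_6]/\Z[\zeta_6]$, and the four classes vertex / three edge-midpoint directions generating $\tfrac12\Z[\zeta_6]/\Z[\zeta_6]$. Note that $4^{\bf f}$ and $3^{\bf e}$ are precisely the orbifold points satisfying $n_i(6-\mu_i) = 6$, hence smooth upstairs, which is why the point is permitted to leave the vertex set.

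The heart of the argument is the local matching: that $H_j$ equals the image of the summand $\Delta_{\mu_j}$ under the glue isomorphism $\mathrm{Disc}(\Lambda_{\mu^u}) \cong \mathrm{Disc}(\tilde R)$. For this I would use the explicit description of the Hermitian form $\star$ in period coordinates, together with the $(1+\zeta_6)$-modularity $(1+\zeta_6)\Lambda^\vee = \Lambda$. Two things must be checked. First, the fractional period vector generating $H_j$ does pair integrally with all of $\Lambda_{\mu^u}$ under $\cdot$, so that $H_j \subset \mathrm{Disc}(\Lambda_{\mu^u})$; this is where $\tfrac{1}{1+\zeta_6}\Z[\zeta_6] = \mathfrak{p}_3^{-1}$ (resp.\ $\tfrac12\Z[\zeta_6]$) being exactly dual to the $E_6$ (resp.\ $D_4$) structure at the $j$-th coalescence enters. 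Second, under the glue isomorphism this vector maps to a generator of $E_6^*/E_6$ (resp.\ onto all of $D_4^*/D_4$), which I would establish by representing the discriminant classes of $A_{\mu_j-1}^{\Z[\zeta_6]}$ by dual vectors whose external period lands in the prescribed fractional ideal.

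The step I expect to be the main obstacle is this matching, and within it the verification that each decorated summand embeds cleanly into the subquotient $I^\perp/I$---that is, that $\Delta_{\mu_j}$ meets the gluing $I$ trivially and lies in $I^\perp$, so that ``saturating the summand'' is well-defined and has full image. I would argue this from the fact that $I$ is isotropic for the $\Q/2\Z$-valued linking form while the decorated summands carry anisotropic forms ($E_6$ contributes $\Z/3$ with nonzero $q$; the relevant $D_4$ classes are nonisotropic), together with the observation that the root lattice of a decorated orbifold point is localized there and hence orthogonal, under $\star$, to the equatorial gluing data producing $I$. Granting the clean embedding and the local matching, summing over decorated indices identifies $H$ with the image of $\bigoplus_{j\text{ decorated}}\Delta_{\mu_j}$, which is exactly the asserted partial saturation of $\Lambda_{\mu^u}$ inside $\Lambda_{\mu^u}^*$.
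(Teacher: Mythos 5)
Your overall strategy---identify the enlargement $\Lambda_\mu/\Lambda_{\mu^u}$ cut out by the weak period conditions \eqref{weak-ints} with a subgroup of ${\rm Disc}(\Lambda_{\mu^u},\cdot)$ supported on the decorated summands of $\Delta_\mu$---is the right one, and your bookkeeping is accurate: the local groups $\tfrac{1}{1+\zeta_6}\Z[\zeta_6]/\Z[\zeta_6]\cong\Z/3\Z$ and $\tfrac12\Z[\zeta_6]/\Z[\zeta_6]\cong(\Z/2\Z)^2$ do have the same orders as $E_6^*/E_6$ and $D_4^*/D_4$, and you correctly observe that the decorated indices are exactly those with $n_i(6-\mu_i)=6$. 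But the decisive step is not carried out: both of the ``two things to be checked'' (that the fractional period vectors land in $\Lambda_{\mu^u}^*$, and that under the gluing isomorphism they generate precisely the decorated summand of the discriminant) are announced as computations you \emph{would} do with explicit $E_6$ and $D_4$ dual vectors in period coordinates, and an order count alone cannot substitute for them---a subgroup of $\Lambda_{\mu^u}^*/\Lambda_{\mu^u}$ of order $3$ need not be \emph{the} image of the $E_6$-summand. Your fallback argument for the ``clean embedding'' is also only half convincing: anisotropy of the quadratic form on the decorated $\Delta_{\mu_j}$ does give $\Delta_{\mu_j}\cap I=0$, but the claim that $\Delta_{\mu_j}\subset I^\perp$ is asserted from a locality heuristic rather than proved.

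The idea that closes this gap in the paper's proof is to avoid glue vectors entirely: since $(\Lambda,\cdot)$ is unimodular, $\Lambda_{\mu^u}^*$ is exactly the orthogonal projection of $\Lambda$ onto $\Lambda_{\mu^u}\otimes_\Z\Q$, and this projection has a geometric meaning---it is the map that ``cones off'' each cluster of $\mu_i$ nearby curvature-$1$ singularities of an honest triangulation to a single point of curvature $\mu_i$ (cf.\ Fig.~9 of \cite{thurston}). One then reads off directly that the period from $p_1$ to the coned-off point of a cluster of $3$ (resp.\ $4$) unit singularities is $\tfrac12\int_\gamma\omega\in\tfrac12\Z[\zeta_6]$ (resp.\ lies in $\tfrac{1}{1+\zeta_6}\Z[\zeta_6]$), so the image of $\Lambda$ under projection is precisely the locus of weakened period integrality, summand by summand. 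This single observation simultaneously establishes the containment $\Lambda_\mu\subset\Lambda_{\mu^u}^*$, identifies which summand of the discriminant each weakening saturates, and sidesteps the $I^\perp/I$ subtlety you flagged. I would either import this projection argument or actually perform the local $E_6$/$D_4$ dual-vector computation you describe; as written, the proof is a plan for the hard step rather than a proof of it.
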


\begin{proof} It was observed in the proof of Proposition \ref{curve-sublattice} that 
allowing $p_i$ to be the center of an orbi-face (decoration $4^{\bf f}$) or center
of an orbi-edge (decoration $3^{\bf e}$) corresponds to allowing the period to lie in
$\int_{p_1}^{p_i}\omega
\in \tfrac{1}{1+\zeta_6}\Z[\zeta_6]$ or $\tfrac{1}{2}\Z[\zeta_6]$, respectively.

Note that $\tfrac{1}{1+\zeta_6}\Z[\zeta_6]/\Z[\zeta_6] = \Z/3\Z$ and
$\tfrac{1}{2}\Z[\zeta_6]/\Z[\zeta_6] = \Z/2\Z\oplus \Z/2\Z$. This weakening
of period integrality produces an overlattice $\Lambda_\mu\supset\Lambda_{\mu^u}$
given by allowing the period $\int_{p_1}^{p_i}\omega$
to lie in the $\Z/3\Z$- or $(\Z/2\Z\oplus \Z/2\Z)$-enlargement.

We claim that this overlattice
corresponds to saturating $\Lambda_{\mu^u}\subset \Lambda_{\mu^u}^*$ in the
 corresponding summand of $\Delta_{\mu^u}$. To see why, first observe
 that unimodularity of $(\Lambda,\cdot)$ implies that
 $\Lambda_{\mu^u}^*$ is the image of the orthogonal projection of $\Lambda$
 into $\Lambda_{\mu^u}\otimes_\Z\Q$. In a tubular neighborhood
 of $$\Lambda_{\mu^u}\otimes_{\Z}\R = \C^{1,{\rm len}(\mu)-3}\subset \C^{1,9} =
 \Lambda\otimes_{\Z} \R,$$ the orthogonal projection has a geometric interpretation
 as the map on flat cone spheres which ``cones off'' a collection of $\mu_i$ nearby 
 singularities curvature $1$, to a single singularity of curvature $\mu_i$, see \cite[Fig.~9]{thurston}.
 
 The image of $\Lambda$ under this orthogonal projection consists exactly of the flat cone spheres
 with the weakened period integrality. For example, suppose $\gamma$ is a closed loop
 based at $p_1$ enclosing $3$ singularities of curvature $1$. Then, the period from 
 $p_1$ to the coned off point of curvature $3$ will be $\frac{1}{2}\int_{\gamma} \omega\in \frac{1}{2}\Z[\zeta_6]$.
 Analogous results hold when $1$, $2$, $4$, or $5$ singularities are coned off.
  \end{proof}
 
 \begin{theorem}\label{orbi-weighted-thm} The weighted generating function $f_{G,\mu}(q)$ for orbitriangulations
in the bistratum $(G,\mu)$ is a modular form for $\Gamma(L(\mu^u))$
of weight ${\rm len}(\mu)-2$, possibly quasimodular if ${\rm len}(\mu)=4$.   \end{theorem}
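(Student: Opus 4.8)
The plan is to reduce to the undecorated Theorem~\ref{weighted-kappa} by passing through the honest Eisenstein lattice $\Lambda_{\mu^u}$, which, unlike the module $\Lambda_\mu$, has $\star$ valued in $(1+\zeta_6)\Z[\zeta_6]$ and hence falls under the Siegel--Weil machinery of Theorem~\ref{curve-weighted-count}. First I would apply that theorem to $\Lambda_{\mu^u}$, whose Eisenstein signature is $(1,s)$ with $s={\rm len}(\mu)-3$, to produce the vector-valued form $f_{\Lambda_{\mu^u}}(\tau)$: a (quasi)modular form of weight $1+s={\rm len}(\mu)-2$ valued in the Weil representation $\rho_{\Lambda_{\mu^u}}$, quasimodular precisely in the $s=1$, i.e.~${\rm len}(\mu)=4$, non-compact case.

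The crux is to realize $f_{G,\mu}$ as a fixed partial sum of the components of $f_{\Lambda_{\mu^u}}$. By the preceding proposition, $\Lambda_\mu$ is the overlattice of $\Lambda_{\mu^u}$ inside $\Lambda_{\mu^u}^*$ cut out by the subgroup $I:=\Lambda_\mu/\Lambda_{\mu^u}\subseteq\Lambda_{\mu^u}^*/\Lambda_{\mu^u}$; thus $v\in\Lambda_{\mu^u}^*$ lies in $\Lambda_\mu$ exactly when its discriminant class $[v]$ lies in $I$. Since the orbitriangulation attached to $v$ has $v\cdot v=\tfrac23(v\star v)$ triangles, the Siegel exponent $\tfrac12 v\cdot v$ matches the $q$-exponent of Definition~\ref{completed-gen}. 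Taking $\Gamma={\rm U}^*(\Lambda_{\mu^u})\cap\Gamma_{G,\mu}$, which acts trivially on $\Lambda_{\mu^u}^*/\Lambda_{\mu^u}$ and so fixes $I$ and preserves $\Lambda_\mu$, I would identify
$$\sum_{\gamma\in I}[e_\gamma]\,f_{\Lambda_{\mu^u}}(\tau)=c_0'+\sum_{v\in\Gamma\backslash\Lambda_\mu^+}\frac{1}{|{\rm Stab}_\Gamma(v)|}\,q^{\frac12 v\cdot v},$$
and then correct by the finite index $[\Gamma_{G,\mu}:\Gamma]$ (a weighted orbit count scales by the index under passage to a finite-index subgroup) to obtain a scalar multiple of $f_{G,\mu}(\tau)$.

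To conclude, I would invoke Corollary~\ref{level}, which gives that $\Lambda_{\mu^u}^*/\Lambda_{\mu^u}$ is $L(\mu^u)$-torsion, together with Proposition~\ref{modular-level}: each component $[e_\gamma]f_{\Lambda_{\mu^u}}$ is a (quasi)modular form for $\Gamma(L(\mu^u))$, hence so is the finite sum over $\gamma\in I$. Transporting along the scalar identification yields the theorem, with weight ${\rm len}(\mu)-2$ and the quasimodular caveat at ${\rm len}(\mu)=4$.

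The main obstacle is the bookkeeping of the second paragraph: confirming that summing the components over exactly $I=\Lambda_\mu/\Lambda_{\mu^u}$ reproduces the orbitriangulation count, matching stabilizer weights across the two group actions and tracking the index $[\Gamma_{G,\mu}:\Gamma]$. A conceptually important subtlety, and the honest point of departure from Theorem~\ref{weighted-kappa}, is why the answer lives in $\Gamma(L(\mu^u))$ rather than $\Gamma_1(L(\mu^u))$: for undecorated $\kappa$ the lattice $\Lambda_{\mu^u}$ is even, so triangle counts are even integers and $f_\kappa$ is a power series in $q$ (fixed by $T$), whereas an orbitriangulation can have a fractional triangle count $v\cdot v$, so $f_{G,\mu}$ is only a $q^{1/L(\mu^u)}$-series invariant under $T^{L(\mu^u)}$ but not $T$. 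Hence $\Gamma(L(\mu^u))$, which tolerates such fractional expansions, is forced, while $\Gamma_1(L(\mu^u))\ni T$ would not.
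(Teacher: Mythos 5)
Your proposal is correct and follows essentially the same route as the paper: apply Theorem \ref{curve-weighted-count} to $\Lambda_{\mu^u}$, identify $f_{G,\mu}$ (up to the index $[\Gamma_{G,\mu}:\Gamma]$) with the sum of the components $[e_\gamma]f_{\Lambda_{\mu^u}}$ over $\gamma\in\Lambda_\mu/\Lambda_{\mu^u}\subset\Lambda_{\mu^u}^*/\Lambda_{\mu^u}$, and conclude via Corollary \ref{level} and Proposition \ref{modular-level}. Your closing observation---that fractional $q$-powers force $\Gamma(L(\mu^u))$ in place of the $\Gamma_1(L(\mu^u))$ of the undecorated case---is exactly the point the paper singles out.
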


\begin{proof} The proof is the same as Theorem \ref{weighted-kappa}, except that
we sum over the coefficients $[e_\gamma] \,f_{\Lambda_{\mu}}(q)$ ranging over all
$\gamma\in \Lambda_\mu/\Lambda_{\mu^u}\subset \Lambda_{\mu^u}^*/\Lambda_{\mu^u}.$ In particular,
the resulting series may involve a fractional power of $q$, and so we only deduce
modularity for $\Gamma(L(\mu^u))$ as opposed to $\Gamma_1(L(\mu^u))$. \end{proof}

\begin{remark} The $1/|{\rm Aut}^+\mathcal{T}|$-weighted counts of generating functions for
orbitriangulations in any fixed stratum of sextic differentials (even for higher genus curves)
is a mixed weight quasimodular form for $\Gamma(6)$ by \cite{Engel:2021aa, Engel:2018ab}. While
this bounds the dimension of the space in which ${\rm BB}(q)$ lives, it does
not give sufficient control on the dimension for the purposes of this paper.
\end{remark}

\begin{proposition}\label{substitute} Let $h(q)$ be a quasimodular form for $\Gamma(N)$ which is a power
series in $q^{1/d}$ for some divisor $d\mid N$. Then $h(q^d)\in \C[[q]]$ is a quasimodular
form for $\Gamma_H(Nd)$ where $H\subset (\Z/Nd\Z)^*$ is the subgroup of units which are $1$
mod $N$.   \end{proposition}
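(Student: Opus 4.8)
The plan is to interpret the substitution $q\mapsto q^d$ geometrically as precomposition with the scaling matrix $\alpha_d:=\twobytwo{d}{0}{0}{1}$, which acts on $\mathbb{H}$ by $\tau\mapsto d\tau$. Writing $h$ as a function on $\mathbb{H}$ with $q=e^{2\pi i\tau}$, the hypothesis that $h$ is a power series in $q^{1/d}$ says precisely that $h(\tau)=\sum_{m\geq 0}a_m e^{2\pi i m\tau/d}$, so that $h(d\tau)=\sum_m a_m q^m$ is exactly the series $h(q^d)$. Thus I must show that $H(\tau):=h(d\tau)=h(\alpha_d\cdot\tau)$ is (quasi)modular of the same weight $k$ for $\Gamma_H(Nd)$. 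The first key observation is that the $q^{1/d}$-expansion makes $h$ invariant under $\tau\mapsto\tau+d$, i.e.\ under $T^d=\twobytwo{1}{d}{0}{1}$ with trivial automorphy factor. Since $\Gamma(N)$ is normal in ${\rm SL}_2(\Z)$ and already contains $T^N=(T^d)^{N/d}$, the form $h$ is therefore (quasi)modular of weight $k$ for the enlarged group $\Gamma':=\langle\Gamma(N),T^d\rangle$, whose reduction mod $N$ consists of the unipotent matrices $\twobytwo{1}{b'}{0}{1}$ with $d\mid b'$.

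Next I would run the conjugation computation. For $\gamma=\twobytwo{a}{b}{c}{e}\in\Gamma_H(Nd)$ — so $c\equiv 0\pmod{Nd}$ and $a\equiv e\equiv 1\pmod N$ — set $\gamma':=\alpha_d\gamma\alpha_d^{-1}=\twobytwo{a}{db}{c/d}{e}$. Then $\gamma'$ has integer entries because $Nd\mid c$ forces $d\mid c$, and $\gamma'\in\Gamma'$: its diagonal is $\equiv 1\pmod N$, its lower-left entry $c/d\equiv 0\pmod N$, and — this is the crucial point — its upper-right entry $db$ is automatically a multiple of $d$, which is exactly the relaxed congruence permitted by $\Gamma'$. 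It is precisely here that the $q^{1/d}$ hypothesis is used: without the $T^d$-invariance of $h$ we would be forced to require $db\equiv 0\pmod N$, i.e.\ an extra condition $b\equiv 0\pmod{N/d}$, shrinking the group below $\Gamma_H(Nd)$. With $\gamma'\in\Gamma'$ in hand, the automorphy factor is read off from $H(\gamma\tau)=h(\gamma'\cdot(d\tau))$: the lower row of $\gamma'$ evaluated at $d\tau$ gives $(c/d)(d\tau)+e=c\tau+e$, so that $H(\gamma\tau)=(c\tau+e)^k H(\tau)$, the correct weight-$k$ factor for $\gamma$.

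Finally I would handle the quasimodular case and the analytic conditions. Rather than tracking correction terms by hand, I would pass to the canonical almost-holomorphic completion $\hat h=\sum_r g_r(\tau)\,({\rm Im}\,\tau)^{-r}$, an honest weight-$k$ almost-holomorphic modular form for $\Gamma(N)$ whose holomorphic part is $g_0=h$; since the completion is canonical it inherits $T^d$-invariance from $h$ and is thus modular for $\Gamma'$. Applying the previous paragraph's identity to $\hat h$ shows that $\hat H(\tau):=\hat h(d\tau)=\sum_r d^{-r}g_r(d\tau)\,({\rm Im}\,\tau)^{-r}$ is almost-holomorphic modular of weight $k$ for $\Gamma_H(Nd)$, with holomorphic part ($r=0$ term) equal to $H$; hence $H$ is quasimodular of weight $k$ for $\Gamma_H(Nd)$, and when the depth is $0$ this recovers genuine modularity. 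Holomorphy of $H$ is immediate, and boundedness at every cusp $\mathfrak{a}$ follows from boundedness of $h$ at $\alpha_d\mathfrak{a}$ together with the fact that $H\in\C[[q]]$ has a genuine $q$-expansion at $\infty$. The main obstacle is the middle step: verifying that the conjugate $\alpha_d\Gamma_H(Nd)\alpha_d^{-1}$ lands inside $\Gamma'$ and extracting the precise level $Nd$ and character group $H$, which is exactly the place where the hypotheses $d\mid N$ and the $q^{1/d}$-periodicity are indispensable.
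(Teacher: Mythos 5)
Your argument is correct and is essentially the paper's proof: both hinge on the observation that the $q^{1/d}$-expansion makes $h$ modular for the group generated by $\Gamma(N)$ and $T^d$ (matrices congruent mod $N$ to upper-triangular unipotent with upper-right entry divisible by $d$), and then conjugate by $\twobytwo{d}{0}{0}{1}$ to land exactly in $\Gamma_H(Nd)$. Your explicit verification of the automorphy factor and the treatment of the quasimodular case via the almost-holomorphic completion are more detailed than the paper's two-line sketch, but the route is the same.
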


\begin{proof} Because $h(q)$ is a series in $q^{1/d}$, it is modular for
 any matrix $$M=\twobytwo{1+a_1N}{a_2d}{a_3N}{1+a_4N}\in \Gamma_1(N).$$ 
 Substituting $q\to q^d$ corresponds to the substitution $\tau\mapsto d\tau$,
which is induced by the element $$D = \twobytwo{d}{0}{0}{1}\in {\rm GL}_2^+(\Q).$$ 
Then $h(q^d)$ is modular for any matrix $$D^{-1}MD= \twobytwo{1+a_1N}{a_2}{a_3dN}{1+a_4N}$$
which exactly defines the group $\Gamma_H(Nd)$. \end{proof}

\begin{definition} Define $\Gamma(N,d)$ to be $\Gamma_H(Nd)$ where $H$ is the subgroup
of $(\Z/Nd\Z)^*$ of units congruent to $1$ mod $N$. We say that a (quasi)modular form
for $\Gamma(N,d)$ is of {\it level} $(N,d)$. \end{definition}

\begin{theorem}\label{useful} For any bistratum $(G,\mu)$,
its generating function for triangulations
$f_{G,\mu}(q^{|G|})$ is a modular form (quasimodular when
${\rm len}(\mu)=4$) for $\Gamma(L(\mu), |G|)$ of weight ${\rm len}(\mu)-2$.
\end{theorem}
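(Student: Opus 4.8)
The plan is to start from the orbifold count of Theorem \ref{orbi-weighted-thm} and transport it to the $G$-cover via the substitution $q\mapsto q^{|G|}$, carefully tracking both the congruence level and the denominator of the $q$-expansion. By Theorem \ref{orbi-weighted-thm}, the orbitriangulation generating function $f_{G,\mu}(q)$ is a modular form (quasimodular when ${\rm len}(\mu)=4$) of weight ${\rm len}(\mu)-2$ for $\Gamma(L(\mu^u))$; since the decorations ${\bf e},{\bf f}$ do not alter the underlying multiset of curvatures, $L(\mu)$ means exactly $L(\mu^u)$, so we have modularity for $\Gamma(L(\mu))$. Writing $h(\tau):=f_{G,\mu}(q)$ with $q=e^{2\pi i\tau}$, the function to be analyzed is $H(\tau):=f_{G,\mu}(q^{|G|})=h(|G|\tau)$, and by definition the target group is $\Gamma(L(\mu),|G|)=\Gamma_H(L(\mu)|G|)$.

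The first key step is to pin down the denominator of the $q$-expansion of $h$. On one hand, modularity for $\Gamma(L(\mu))$ forces $h$ to be a power series in $q^{1/L(\mu)}$, i.e.\ invariant under $\tau\mapsto\tau+L(\mu)$. On the other hand, an orbitriangulation contributing to the coefficient of $q^n$ has $2n$ triangles, and its $G$-cover is an honest triangulation of $S^2$ with $2n|G|$ triangles; since any triangulation of $S^2$ has an even number of triangles, $2n|G|\in 2\Z$, whence $n\in\tfrac{1}{|G|}\Z$ and $h$ is also invariant under $\tau\mapsto\tau+|G|$. The translations fixing $h$ form a subgroup of $(\R,+)$, so $h$ is invariant under $\tau\mapsto\tau+g$ where $g:=\gcd(L(\mu),|G|)$. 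Consequently $h$ is modular for $\Gamma':=\langle\Gamma(L(\mu)),T^g\rangle$, which, since $g\mid L(\mu)$, is precisely the set of $\twobytwo{a}{b}{c}{d}\in\Gamma_1(L(\mu))$ with $b\equiv 0\pmod g$. In particular $H$ is a genuine integer power series in $q$.

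The main step is then the conjugation computation, carried out exactly as in Proposition \ref{substitute} but with $\Gamma'$ in place of $\Gamma(L(\mu))$. Set $D=\twobytwo{|G|}{0}{0}{1}$, so that $D\cdot\tau=|G|\tau$ and $H(\tau)=h(D\cdot\tau)$. For $\gamma=\twobytwo{a}{b}{c}{d}\in\Gamma_H(L(\mu)|G|)$ I would verify that
\[
D\gamma D^{-1}=\twobytwo{a}{|G|b}{c/|G|}{d}
\]
is an integral matrix lying in $\Gamma'$: integrality and $c/|G|\equiv0\pmod{L(\mu)}$ follow from $c\equiv0\pmod{L(\mu)|G|}$, the diagonal congruences $a\equiv d\equiv1\pmod{L(\mu)}$ are the condition $a,d\in H$, and $|G|b\equiv0\pmod g$ holds because $g\mid|G|$. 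Applying the modularity of $h$ under $D\gamma D^{-1}$ and simplifying the automorphy factor $((c/|G|)(|G|\tau)+d)^{{\rm len}(\mu)-2}=(c\tau+d)^{{\rm len}(\mu)-2}$ yields $H(\gamma\tau)=(c\tau+d)^{{\rm len}(\mu)-2}H(\tau)$, i.e.\ modularity of weight ${\rm len}(\mu)-2$ for $\Gamma(L(\mu),|G|)$. Holomorphy and the cusp growth conditions are preserved because $D$ maps cusps to cusps, and when ${\rm len}(\mu)=4$ the quasimodular correction term $c/{\rm Im}\,\tau$ transforms covariantly under the same conjugation, giving the quasimodular conclusion.

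The main obstacle, and the reason one cannot simply quote Proposition \ref{substitute} with $d=|G|$, is that that proposition requires $d\mid N$, whereas $|G|$ almost never divides $L(\mu)\in\{1,2,3,6\}$. The resolution, which is the real content, is the observation that the effective denominator of $h$ is not $|G|$ but $g=\gcd(|G|,L(\mu))$, together with the independent fact that $H$ is nonetheless an honest power series in $q$; these two facts are exactly what force the conjugate $D\gamma D^{-1}$ into $\Gamma'$ and produce the stated group $\Gamma(L(\mu),|G|)$. I expect the bookkeeping of the congruence conditions defining $\Gamma_H(L(\mu)|G|)$, and checking that they survive the conjugation by $D$, to be the only delicate point; everything else is a direct transcription of Proposition \ref{substitute}.
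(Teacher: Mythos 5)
Your proof is correct and follows essentially the same route as the paper: quote Theorem \ref{orbi-weighted-thm} for $f_{G,\mu}(q)$ and then conjugate by $D=\twobytwo{|G|}{0}{0}{1}$ as in Proposition \ref{substitute}, using that $f_{G,\mu}(q)$ is a power series in $q^{1/|G|}$ (your even-triangle-count argument for this is the right one, and this fact is genuinely needed since $T\in\Gamma_H(L(\mu)|G|)$). Your objection that Proposition \ref{substitute} cannot be quoted verbatim because $|G|\nmid L(\mu)$ is a fair reading of that proposition's stated hypothesis, but the hypothesis $d\mid N$ is never actually used in its proof: for any $d$, a matrix $M=\twobytwo{1+a_1N}{a_2d}{a_3N}{1+a_4N}$ factors as $(MT^{-a_2d})\,T^{a_2d}$ with $MT^{-a_2d}\in\Gamma(N)$, so a form modular for $\Gamma(N)$ and invariant under $T^d$ is modular for all such $M$, and the conjugation computation then identifies $\Gamma_H(Nd)$ exactly as in the paper. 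Your detour through $g=\gcd(L(\mu),|G|)$ and the group $\Gamma'$ is therefore valid but not strictly necessary --- note that $D\gamma D^{-1}$ has upper-right entry $|G|b$, divisible by $|G|$ itself, so it already lies in the smaller group $\Gamma_1(L(\mu))\cap\Gamma^0(|G|)$ on which invariance under $T^{|G|}$ alone suffices. So what you have found is a cosmetic overstatement of the hypothesis in Proposition \ref{substitute}, not a gap in the argument; your patch is correct either way.
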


This follows directly from Theorem \ref{orbi-weighted-thm} and Proposition \ref{substitute}.

\begin{definition} Let $M_k^{N,d}=M_k(\Gamma(N,d))$ denote the space of modular forms 
of level $(N,d)$ and weight $k$ and let $M_k^N$ denote
the space of modular forms for $\Gamma_1(N)=\Gamma(N,1)$. \end{definition}

We have $M_k^{N,d}\supset M_k^{N',d'}$ when $N'\mid N$ and $d'\mid d$. 

\begin{proposition} The generating function ${\rm BB}(q)$ for oriented fullerenes lies in the
$152$-dimensional vector space
\begin{align*} 
\mathbb{O}:=M_{10}^1\,\oplus & \,M_9^3\oplus M_8^6\oplus M_7^6\oplus M_6^{6,2}\oplus M_5^{6,2} \oplus M_4^{6,12} \oplus
M_3^{6,12} \oplus M_2^{6,12}\, \oplus   \\
& \C E_2(q) \oplus \C E_2(q^5) \oplus_{n\in \{1,2,3,4,5,6,10\}} \C E(q^n) \oplus \C.
\end{align*} \end{proposition}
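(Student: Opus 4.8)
The plan is to begin from Theorem \ref{include-exclude}, which writes ${\rm BB}(q)$ as a finite $\C$-linear combination of the functions $f_{G,\mu}(q^{|G|})$ over the $96$ triangulable bistrata, and then bound each summand by Theorem \ref{useful}: the contribution of $(G,\mu)$ is a (quasi)modular form of weight ${\rm len}(\mu)-2$ and level $(L(\mu^u),|G|)$, quasimodular exactly for ${\rm len}(\mu)=4$. I would organize everything by ${\rm len}(\mu)$, i.e.\ by weight. For each weight $k={\rm len}(\mu)-2\geq 3$ the argument is a routine divisibility check: using $L(\mu^u)\in\{1,2,3,6\}$ from Corollary \ref{level}, the group orders $|G|$ from Table \ref{bistrata}, and the inclusion $M_k^{N,d}\supset M_k^{N',d'}$ for $N'\mid N$, $d'\mid d$, one verifies that every occurring level $(L(\mu^u),|G|)$ divides the target. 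This places weight $10$ and $9$ (the single strata $(C_1,1^{12})$ and $(C_1,\{2,1^{10}\})$) in $M_{10}^1$ and $M_9^3$; weight $8$ (levels $(2,1),(3,1)$) and weight $7$ (level $(6,1)$) in $M_8^6$ and $M_7^6$; weight $6$ and $5$ in $M_6^{6,2}$ and $M_5^{6,2}$; and weight $4$ and $3$ in $M_4^{6,12}$ and $M_3^{6,12}$, since for those lengths the occurring $(L,|G|)$ are among $(6,1),(6,2),(3,3),(2,4),(6,4),(6,6)$, all dividing $(6,12)$.

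The weight-$1$ contributions are exactly the zero-dimensional strata, where ${\rm len}(\mu)=3$. By Example \ref{gen-ex} each such $f_{G,\mu}(q^{|G|})$ is a scalar multiple of $E(q^n)$ for an explicit $n$, and reading these off the last column of Table \ref{bistrata} gives precisely $n\in\{1,2,3,4,5,6,10\}$ (for instance $n=10$ from the icosahedron and $n=5$ from the pentagonal bipyramid). Hence the weight-$1$ part lies in $\bigoplus_{n\in\{1,2,3,4,5,6,10\}}\C E(q^n)$.

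The crux is weight $2$, the one-dimensional strata with ${\rm len}(\mu)=4$, where the forms are only quasimodular. Here I would use that for any congruence group $\Gamma$ the space of weight-$2$ quasimodular forms equals $M_2(\Gamma)\oplus\C E_2$, because the depth is at most $1$ and the anomaly is a holomorphic weight-$0$ modular form, hence constant. Thus each length-$4$ summand splits as a genuine modular form of its own level plus a multiple of the single series $E_2(q)$, and all anomalies collect into one copy $\C E_2(q)$. For the genuine parts I would first handle the genuinely modular length-$4$ strata and the modular parts of the non-tube quasimodular strata: these have $|G|\in\{1,2,3,4,6,12\}$ and $L(\mu^u)\mid 6$, so their levels divide $(6,12)$ and they land in $M_2^{6,12}$. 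The remaining quasimodular strata are the $k$-tubes, where the a priori bounds $(2,8)$ and $(2,10)$ for $D_4,D_5$ fail to divide $(6,12)$; for these I would instead use the explicit identifications in Table \ref{bistrata} as $E_2(q^k)$, writing $E_2(q^k)=\tfrac{1}{k}E_2(q)+\tfrac{1}{k}\bigl(kE_2(q^k)-E_2(q)\bigr)$ with $kE_2(q^k)-E_2(q)\in M_2(\Gamma_0(k))$. The genuine part then has level $k$, which lies in $M_2^{6,12}$ precisely when $k\mid 72$. The single exception is the $D_5$ $5$-tube: its level-$5$ part is \emph{not} in $M_2^{6,12}$ since $\Gamma(6,12)=\Gamma_H(72)\not\subset\Gamma_0(5)$, and this forces the lone extra generator $\C E_2(q^5)$. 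One checks that $E_2(q)$ and $E_2(q^5)$ stay independent modulo $M_2^{6,12}$, as their only modular combination $E_2(q)-5E_2(q^5)$ spans $M_2(\Gamma_0(5))\not\subset M_2^{6,12}$. Hence the weight-$2$ part lies in $M_2^{6,12}\oplus\C E_2(q)\oplus\C E_2(q^5)$.

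Assembling all weights, and including one copy of $\C$ to absorb the residual constant produced when the M\"obius inversion of Proposition \ref{lin-combo} is carried out (the accumulated $c_0$'s), shows ${\rm BB}(q)\in\mathbb{O}$. The remaining assertion $\dim\mathbb{O}=152$ is then a finite computation with the standard dimension formulas for $M_k(\Gamma(N,d))$. I expect the main obstacle to be precisely the weight-$2$ bookkeeping: isolating the $E_2$-anomaly uniformly across levels, and recognizing that among the tube levels only $k=5$ escapes the $(6,12)$-tower, so that the quasimodular correction space is genuinely two-dimensional rather than one.
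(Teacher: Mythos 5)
Your proposal is correct and follows essentially the same route as the paper: apply Theorem \ref{useful} stratum by stratum, take least common multiples of the levels in each weight, absorb the weight-$2$ quasimodular anomalies into a single $\C E_2(q)$, and observe that among the tube series $E_2(q^k)$ only $k=5$ fails to reduce to $M_2^{6,12}$ modulo $E_2(q)$. The only cosmetic difference is that you cite Table \ref{bistrata} for the identification of the $k$-tube generating functions with $E_2(q^k)$, whereas the paper justifies this inside the proof by viewing a $k$-tube as a $C_k$-cover of the stratum $\kappa=\{5,5,1,1\}$, whose generating function is forced into $\C E_2(q)$ by being quasimodular of weight $2$ and level $1$.
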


\begin{proof} For all bistrata of dimension $2$ (weight $3$) or greater,
we simply go through the list in Table \ref{bistrata}, applying Theorem \ref{useful}
in each case, to determine a space of modular forms containing $f_{G,\mu}(q^{|G|})$. The results are listed
in the level and weight columns of Table \ref{bistrata}. Using the containments
between $M_k^{N,d}$ spaces, we take the least common multiples of the $N$'s and the $d$'s
to produce a level $(N,d)$ in that weight containing all $f_{G,\mu}(q^{|G|})$.

In weight $2$, the series $f_{G,\mu}(q^{|G|})$ may be only a quasimodular form, indicated in
the weight column of Table \ref{bistrata} by (Q). Any quasimodular form
of weight $2$ is modular after subtracting a constant multiple of
 $E_2(q)=-\tfrac{1}{24}+\sum_{n\geq 1} \sigma(n)q^n$. So $M_2^{6,12}+\C E_2(q)$ 
covers all the $1$-dimensional bistrata for which $f_{G,\mu}(q^{|G|})$ is not explicitly listed.

The bistrata of dimension $1$ where $f_{G,\mu}(q^{|G|})$ is given explicitly are the
$k$-tubes. Any $k$-tube is a $C_k$-cover of a triangulation in the curvature stratum
$\kappa =\{5,5,1,1\}$, branched over the two points of curvature $5$. We necessarily
have $f_{G,\{5,5,1,1\}}(q)\in \C E_2(q)$ for any $G$ because it is quasimodular of level $1$ and weight $2$.
Thus, the generating function for $k$-tubes is $E_2(q^k)$, up to a constant. Note $E_2(q) - kE_2(q^k)$
is modular of level $k$ and so lies in $M_2^{6,12}$ for $k=1,2,3,4,6$. So
the only extra series we need beyond $E_2(q)$ is $E_2(q^5)$.

The bistrata of dimension $0$ correspond to flat cone spheres unique up to scaling.
If the smallest triangulation in this bistratum has $2n$ triangles, then the generating
function for the stratum is $E(q^n)$ because the corresponding Eisenstein lattice has rank $1$
(Ex.~\ref{gen-ex}). Finally, we add a constant in $\C$. Then, by Theorem \ref{include-exclude},
the resulting space of forms contains ${\rm BB}(q)$.

The spaces in the decomposition are mutually linearly independent, since for weight $\geq 3$,
there is only one space of that weight, and for weight $2$, the only generator beyond 
 $M^{6,12}_2\oplus \C E_2(q)$ is $E_2(q^5)$, which does not lie in the former space. The weight $1$
 terms $E(q^n)$ are also linearly independent.
\end{proof}

\begin{proposition} \label{full-rank}
The map $\C[[q]]\to \C[[q]]/q^{200}\simeq \C^{200}$ which truncates at $q^{200}$
is injective upon restriction to $\mathbb{O}$. \end{proposition}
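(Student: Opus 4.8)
The plan is to recast the statement as a full-rank assertion and then reduce what must be checked using Sturm-type bounds. Injectivity of truncation on $\mathbb{O}$ is equivalent to the following: choosing any basis $b_1,\dots,b_{152}$ of $\mathbb{O}$ and forming the matrix $A\in\Q^{152\times 200}$ whose $(i,n)$-entry is the coefficient of $q^n$ in $b_i$ for $0\le n\le 199$, the matrix $A$ has rank $152$. Such a basis is completely explicit: the Eisenstein bases of the spaces $M_k^{N,d}$ occurring in the decomposition have standard $q$-expansions, while the remaining generators $E(q^n)$, $E_2(q)$, $E_2(q^5)$, and the constant are given in closed form. All of these have rational $q$-coefficients, so $A$ is a rational matrix and its rank is an exact, finite invariant; since $152<200$ there is in principle room for injectivity.

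First I would exploit the weight grading $\mathbb{O}=\bigoplus_k \mathbb{O}_k$ to localize the problem. On a single graded piece the truncation map is injective by Sturm's theorem: each $\mathbb{O}_k$ lies in $M_k(\Gamma)$ for a congruence subgroup $\Gamma\subset \mathrm{SL}_2(\Z)$ (for instance $\Gamma(6,12)$ in weights $2,3,4$), and a weight-$k$ modular form vanishing to order exceeding $\tfrac{k}{12}[\mathrm{SL}_2(\Z):\Gamma]$ is identically zero. One checks that every index arising here keeps this bound well under $200$, the deepest case being weight $4$ at level $(6,12)$; the weight-$2$ quasimodular generators $E_2(q),E_2(q^5)$ are covered by the same bound after allotting one extra dimension for $E_2$. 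Hence truncation at $q^{200}$ is already injective on each $\mathbb{O}_k$ separately, and the whole proposition reduces to excluding cancellations between pieces of distinct weights.

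The main obstacle is precisely this cross-weight separation. A nonzero element of the kernel would be a mixed-weight combination $\sum_k f_k$ vanishing mod $q^{200}$, and such a sum is not modular of any single weight, so the valence-formula input behind Sturm's bound is unavailable and no single congruence-subgroup bound applies. Moreover, the linear independence of the graded summands established in the previous proposition holds only in $\C[[q]]$, i.e. to infinite order, and does not by itself control the truncation at order $200$. I would settle this last point by the exact rank computation of $A$ over $\Q$, which is a finite and rigorous verification; the per-weight Sturm bounds above serve to reduce it, since they guarantee that one need only confirm that the low-order coefficients distinguishing the graded pieces are independent, rather than analyze the full high-level ambient space, whose own Sturm bound far exceeds $200$ and gives no information.
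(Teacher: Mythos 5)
Your proposal is correct and its decisive step coincides with the paper's proof, which is simply the observation that the $200\times 152$ coefficient matrix of an explicit basis of $\mathbb{O}$ has maximal rank (checked in SAGE). The Sturm-bound discussion is accurate as far as it goes—and you rightly note that it cannot control mixed-weight cancellation, which is exactly why the paper skips it and goes straight to the exact rank computation over $\Q$.
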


\begin{proof} It is an easy check in SAGE that the $200\times 152$ coefficient matrix
of a basis of the above space has maximal rank. \end{proof}

Using the first $200$ coefficients of ${\rm BB}(q)$ from Appendix \ref{appendix}, computed
using {\it buckygen}, we can solve a linear system to determine ${\rm BB}(q)\in \mathbb{O}$ explicitly. 
Proposition \ref{full-rank} proves that the computed solution is unique.
We find that ${\rm BB}(q)$ lies in a much smaller space:

\begin{proposition} ${\rm BB}(q)$ lies in the $47$-dimensional space \begin{align*} &\mathbb{M} =
\mathbb{E}_{10}^1\oplus \mathbb{E}_9^3\oplus (\mathbb{E}_8^2+_{\mathbb{E}_8^1}\mathbb{E}_8^3) \oplus
\mathbb{E}_7^6\oplus \mathbb{E}_6^6\oplus \mathbb{E}_5^6 \oplus (\mathbb{E}_4^6 +_{\mathbb{E}_4^2} \mathbb{E}_4^{2,2})
\,\oplus \\
%&M^3_{6,6}+M^3_{6,4}+M^2_{6,12}
%&M^{10}_1+M^9_3+M^8_2+M^8_3+M^7_6+M^6_6+M^6_{2,2}+M^5_{6,2}+M^4_{6,2}+M^4_{2,4}+M^4_{3,3}+ \\
%&M^3_{6,6}+M^3_{6,4}+M^2_{6,12}+ \C E_2(q) + \C E_2(q^5) + \C A(q) + \C A(q^2)+ \C A(q^3) +  \\
%&\C A(q^4) + \C A(q^5)+\C A(q^6) + \C A(q^{10})+\C.
(\mathbb{E}_3^{6,2}&+_{\mathbb{E}_3^3}\mathbb{E}_3^{3,3})\oplus (\mathbb{E}_2^{6,2}+_{\mathbb{E}_2^3}\mathbb{E}_2^{3,3}) \oplus  \C E_2(q) \oplus
\C E_2(q^5) \oplus_{n
\leq 6} \C E(q^n) \oplus \C\end{align*}
where $\mathbb{E}_k^{N,d}\subset M_k^{N,d}$ is the Eisenstein submodule.
\end{proposition}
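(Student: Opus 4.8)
The plan is to cut the $152$-dimensional space $\mathbb{O}$ down to $\mathbb{M}$ by two a priori refinements, and then to pin down ${\rm BB}(q)$ inside the refined space by the linear-algebra computation. By Theorem \ref{include-exclude}, ${\rm BB}(q)$ is a linear combination of the generating functions $f_{G,\mu}(q^{|G|})$, so it is enough to locate each of these more sharply than the bound $f_{G,\mu}(q^{|G|})\in M_{{\rm len}(\mu)-2}^{L(\mu),|G|}$ of Theorem \ref{useful}. The space $\mathbb{O}$ was built by taking the full spaces $M_k^{N,d}$ at the least common multiple of all levels in each weight; the definition of $\mathbb{M}$ encodes instead (i) that each $f_{G,\mu}$ is Eisenstein, and (ii) the precise levels that actually occur in Table \ref{bistrata}.

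For refinement (i), I would argue that each $f_{G,\mu}$ is not merely modular but an Eisenstein series. This is the Siegel--Weil phenomenon underlying Theorem \ref{curve-weighted-count}: the function $f_{\Lambda_\mu}(\tau)$ is the result of applying the raising operator ${\rm len}(\mu)-3$ times to the theta integral $g_{\Lambda_\mu}(\tau)$, and the integral of a Siegel theta kernel over the arithmetic quotient is an Eisenstein series valued in the Weil representation, whose Fourier coefficients are exactly the weighted lattice-point counts. Extracting the $e_\gamma$-coefficients as in Theorems \ref{weighted-kappa} and \ref{orbi-weighted-thm} preserves this, so each $f_{G,\mu}(q^{|G|})$ lies in the Eisenstein submodule $\mathbb{E}_{{\rm len}(\mu)-2}^{L(\mu),|G|}$. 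This removes every cusp form from $\mathbb{O}$. In the weight-$2$ strata marked (Q) the theta integral converges only conditionally and the relevant Eisenstein series is the quasimodular $E_2$, which is why $\C E_2(q)$ and $\C E_2(q^5)$ are carried along as separate summands.

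For refinement (ii), I would read off from Table \ref{bistrata}, weight by weight, the precise list of levels $(L(\mu),|G|)$ that actually occur, instead of over-approximating by their least common multiple. For fixed weight $k$ the resulting Eisenstein spaces overlap in their common sublevels, and the span is recorded as the amalgamated sum over those intersections: for instance weight $8$ only sees the profiles $\{3,1^9\}$ and $\{2,2,1^8\}$, of levels $2$ and $3$ with common sublevel $1$, producing $\mathbb{E}_8^2+_{\mathbb{E}_8^1}\mathbb{E}_8^3$, and similarly for the weight $2,3,4$ amalgams. The zero-dimensional strata contribute the rank-one theta series $E(q^n)$ of Example \ref{gen-ex}, and the one-dimensional $k$-tube strata contribute $E_2(q^k)$, of which only $E_2(q)$ and $E_2(q^5)$ survive after the lower-level pieces are absorbed.

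With $\mathbb{M}$ so defined, the final step is to verify ${\rm BB}(q)\in\mathbb{M}$ and determine it explicitly. Since $\dim\mathbb{M}=47<200$ and truncation at $q^{200}$ is injective already on the larger space $\mathbb{O}\supseteq\mathbb{M}$ by Proposition \ref{full-rank}, solving the linear system for the first $200$ \emph{buckygen} coefficients against a basis of $\mathbb{M}$ both exhibits a unique candidate in $\mathbb{M}$ and, because the solution is forced to agree with the genuine element ${\rm BB}(q)\in\mathbb{O}$, proves ${\rm BB}(q)\in\mathbb{M}$. I expect the true obstacle to be twofold. The Eisenstein claim is the conceptual crux: one must invoke Siegel--Weil for the relevant unitary dual pair, with the regularization needed in the one-dimensional (weight-$2$) strata, to exclude cusp-form contributions that the weight and level bounds alone permit. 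The more delicate point is that $\mathbb{M}$ is genuinely smaller than the naive ``Eisenstein part of $\mathbb{O}$'' --- for example the icosahedral stratum contributes the level-$30$ series $E(q^{10})$, yet its net coefficient in the inclusion-exclusion vanishes, so $E(q^{10})$ is absent from $\mathbb{M}$. Such cancellations are not visible from any single $f_{G,\mu}$ and are what the explicit computation must certify.
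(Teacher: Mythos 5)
Your proposal is correct in its logical skeleton, and that skeleton is the same as the paper's: the proposition is certified computationally, resting only on the rigorously established containment ${\rm BB}(q)\in\mathbb{O}$, the injectivity of truncation at $q^{200}$ on $\mathbb{O}$ (Proposition \ref{full-rank}), and the $200$ \emph{buckygen} coefficients. Whether one solves the linear system over $\mathbb{O}$ and observes that the unique answer happens to lie in $\mathbb{M}$ (as the paper does), or solves over the subspace $\mathbb{M}\subseteq\mathbb{O}$ and deduces membership from consistency (as you do), is an immaterial reformulation. Where you genuinely diverge is in foregrounding the Siegel--Weil refinement as part of the proof: the paper deliberately does not carry this out, relegating it to Remark \ref{siegel-weil} with the caveat that the reduction from the adelic integral requires the discriminant group of each $\Lambda_\mu$, the index of $\Gamma_{G,\mu}$ in ${\rm U}^*(\Lambda_\mu)$, and the unitary class number one result, and ``appears long and difficult.'' Your sketch of refinement (i) is therefore motivation rather than proof as written, and you correctly concede that even a complete Siegel--Weil argument could not derive $\mathbb{M}$ a priori, since $\mathbb{M}$ omits contributions such as the icosahedral $E(q^{10})$ that are only absent because of cancellations in the inclusion-exclusion. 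Since your final certification step does not actually depend on refinements (i) and (ii) --- they only guide the guess of $\mathbb{M}$, and $\mathbb{M}\subseteq\mathbb{O}$ is easily checked from the divisibility of levels --- the argument stands; but it would be cleaner to present the Siegel--Weil discussion explicitly as heuristic, exactly as the paper does, rather than as a load-bearing step.
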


Furthermore, all but a small number of coefficients in the expression of ${\rm BB}(q)$
in the SAGE basis of $\mathbb{M}$ are nonzero,
suggesting that this result is nearly optimal. We now give an explicit formula.

If $\chi$ and $\psi$ are two Dirichlet characters of modulus $L$ and $R$ respectively, with $\psi$ primitive, and $k$ is a positive integer such that $\chi(-1)\psi(-1) = (-1)^k$, the Eisenstein series $E_{k, \chi, \psi}$ is defined by its Fourier expansion
\begin{equation}\label{fourier}
E_{k, \chi, \psi}(q) = c_0 + \sum_{n \geq 1} \left(\sum_{d|n} \chi(n/d) \psi(d) d^{k-1} \right) q^n.
\end{equation}
where $c_0$ is 0 if $L > 1$ and a certain Bernoulli number if $L=1$. If both characters are the trivial character $\triv$, we abbreviate $E_k = E_{k, \triv, \triv}$. If $k\neq 2$, then for any positive integer $t$, and any $N$ such that $LRt$ divides $N$, the series $E_{k,\chi,\psi}(q^t)$ is a modular form of weight $k$ for $\Gamma_1(N)$ \cite[Thm. 5.8]{Stein:2007aa}. If $k = 2$, then $E_2(q) - t E_2(q^t)$ is a modular form of weight 2 for $\Gamma_0(t)$.

Let  $\chi_3$ denote the non-trivial Dirichlet character with modulus 3, and $\triv$ the trivial character with modulus 1.

\begin{example}\label{eis-ex} We have $E(q)=\tfrac{1}{6}\sum_{v\in \Z[\zeta_6]} q^{v\overline{v}}= E_{1,\triv,\chi_3}(q)$. \end{example}

We may now state our main theorem:

\begin{theorem}\label{main}
${\rm BB}(q)=$
\begin{align*}  &\,\,\,\, \,\,\,\tfrac{809}{2^{15} 3^{13} 5^2}E_{10}(q)
-\tfrac{1}{2^{16} 3^{11} 5^2}E_{9,\triv, \chi_3}(q)
-\tfrac{1}{2^{16} 3^3 5^2}E_{9,\chi_3,\triv}(q)
+ \tfrac{515}{2^{13} 3^{11}}E_8(q)  \\ &
- \tfrac{1}{2^6 3^9 5}E_8(q^2)
-\tfrac{1}{2^{11} 3^4 5}E_8(q^3)
+\tfrac{47}{2^{14} 3^{12}}E_{7,\triv,\chi_3}(q)
+ \tfrac{1}{2^8 3^9 5\cdot 7}E_{7,\triv,\chi_3}(q^2) \\ &
-\tfrac{7}{2^{14} 3^3 5}E_{7,\chi_3,\triv}(q) 
-\tfrac{1}{2^8 3^3 5\cdot 7}E_{7,\chi_3,\triv}(q^2) 
+\tfrac{2138939}{2^{14} 3^{11} 5^2}E_6(q)
+\tfrac{3571}{2^9 3^8 5}E_6(q^2)  \\ &
-\tfrac{29}{2^{11} 3^6 5} E_6(q^3)
+\tfrac{1}{2^6 3^3 5}E_6(q^6) 
+\tfrac{274421}{2^{15} 3^{11} 5^2}E_{5,\triv,\chi_3}(q)
 -\tfrac{8231}{2^{11} 3^9 5}E_{5,\triv,\chi_3}(q^2) \\ &
 -\tfrac{2393}{2^{15} 3^2 5^2}E_{5,\chi_3,\triv}(q)
  -\tfrac{203}{2^{11} 3^2 5}E_{5,\chi_3,\triv}(q^2)
  +\tfrac{12074417}{2^{13} 3^{12}}E_4(q) 
+\tfrac{287281}{2^{10} 3^9 5}E_4(q^2) \\ &
-\tfrac{2672137}{2^11 3^9 5}E_4(q^3)
-\tfrac{1}{2^2 3^3}E_4(q^4)
-\tfrac{4289}{2^8 3^6}E_4(q^6)
+ \tfrac{3343037}{2^{14} 3^{11}} E_{3,\triv,\chi_3}(q) \\ &
+\tfrac{227779}{2^{12} 3^8 5}E_{3,\triv,\chi_3}(q^2)  
 +\tfrac{1}{2^3 3^3}E_{3,\triv,\chi_3}(q^3)
+\tfrac{1}{2^5 3^2}E_{3,\triv,\chi_3}(q^4) 
-\tfrac{33103}{2^{14} 3^3 5}E_{3,\chi_3,\triv}(q)  \\ &
-\tfrac{713}{2^{12} 3\cdot 5}E_{3,\chi_3,\triv}(q^2)
 +\tfrac{1}{2^3 3}E_{3,\chi_3,\triv}(q^3) 
-\tfrac{1}{2^5 3}E_{3,\chi_3,\triv}(q^4)
+\tfrac{4314057521}{2^{15}3^{11}5^2} E_2(q)  \\ &
- \tfrac{59136661}{2^{14} 3^9 5} E_2(q^2)
- \tfrac{32541151}{2^{11} 3^{10} 5}E_2(q^3)
+ \tfrac{1}{2^5 3}E_2(q^4)
+ \tfrac{2}{3\cdot 5} E_2(q^5)
+\tfrac{5023687}{2^{10} 3^8 5}E_2(q^6)   \\ &
+\tfrac{1}{2^6}E_2(q^9)
+\tfrac{2}{3^3}E_2(q^{12})
-\tfrac{24019585289}{2^{15} 3^{13} 5^2} E_{1,\triv,\chi_3}(q)
-\tfrac{3212143}{2^{11} 3^{9} 5} E_{1,\triv,\chi_3}(q^2)  \\ &
-\tfrac{1}{2^2 3^4} E_{1,\triv,\chi_3}(q^3)
-\tfrac{7}{2^7 3^3} E_{1,\triv,\chi_3}(q^4)
-\tfrac{2}{3 \cdot 5} E_{1,\triv,\chi_3}(q^5) 
 -\tfrac{1}{2\cdot 3^2} E_{1,\triv,\chi_3}(q^6)   \\ &
 + \tfrac{3608212332449}{2^{18} 3^{14} 5\cdot 11}.
 \end{align*} 
\end{theorem}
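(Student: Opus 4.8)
The plan is to reduce the theorem to a single finite, exact linear-algebra computation, using the two facts already in place: that ${\rm BB}(q)$ lies in the explicit $152$-dimensional space $\mathbb{O}$, and that by Proposition \ref{full-rank} the truncation map recording the Fourier coefficients up to order $q^{200}$ is injective on $\mathbb{O}$. Together these say that ${\rm BB}(q)$ is the \emph{unique} element of $\mathbb{O}$ whose coefficients up to $q^{200}$ agree with the known integers ${\rm BB}_n$ produced by {\it buckygen}. Hence it suffices to exhibit one element of $\mathbb{O}$ matching this data; injectivity then forces it to equal ${\rm BB}(q)$ identically as a power series, not merely to $200$ terms.

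First I would fix an explicit $\mathbb{Q}$-rational basis of $\mathbb{O}$, assembled from bases of the Eisenstein and cusp subspaces of each summand $M_k^{N,d}$ together with the distinguished generators $E_2(q)$, $E_2(q^5)$, and the rank-one theta series $E(q^n)$. Since $\chi_3$ is rational-valued, every basis element has rational Fourier coefficients, so I would expand each to order $q^{200}$ in exact arithmetic, assemble the $200 \times 152$ coefficient matrix, and solve the resulting linear system against the {\it buckygen} vector over $\mathbb{Q}$. Maximal rank (Proposition \ref{full-rank}) guarantees at most one solution, while the $48$ equations in excess of the $152$ unknowns furnish a stringent consistency check: the mere existence of an exact rational solution is strong evidence that the containment ${\rm BB}(q) \in \mathbb{O}$, and the entire stratification bookkeeping of Table \ref{bistrata} behind it, is correct.

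From the resulting coefficient vector I would observe that the cusp-form components all vanish and that the surviving Eisenstein components collapse into the smaller $47$-dimensional space $\mathbb{M}$ (in particular the $E(q^{10})$ direction drops out). Re-expressing the answer in the twisted Eisenstein basis $E_{k,\chi,\psi}(q^t)$ of \eqref{fourier} --- using Example \ref{eis-ex} to match each rank-one factor $E(q^n)$ with $E_{1,\triv,\chi_3}(q^n)$, and the analogous identification of every $f_{G,\mu}$ from Table \ref{bistrata} with a standard Eisenstein series --- then yields precisely the displayed formula with its explicit rational coefficients.

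The main obstacle is not the concluding linear algebra, which is mechanical once set up, but the faithful translation of the abstract modular-form data into concrete Fourier expansions: one must correctly expand each twisted and rescaled series $E_{k,\chi,\psi}(q^t)$, correctly handle the quasimodular weight-$2$ contributions through the $E_2(q)$ and $E_2(q^5)$ terms (a quasimodular form becomes modular only after subtracting a multiple of $E_2$), and certify that the $200$ input coefficients are exact. This last point is addressed in Appendix \ref{appendix}, and is further corroborated by the historical agreement of {\it buckygen} and {\it fullgen} up to $380$ vertices. All of the genuine mathematical difficulty has been front-loaded into the proof that ${\rm BB}(q) \in \mathbb{O}$; the present theorem is its computational payoff, turning a numerical coincidence of finitely many coefficients into an exact closed form.
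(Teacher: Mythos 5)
Your proposal is correct and follows essentially the same route as the paper: the containment ${\rm BB}(q)\in\mathbb{O}$ plus the injectivity of truncation at $q^{200}$ (Proposition \ref{full-rank}) reduce the theorem to an exact linear solve against the {\it buckygen} data, after which the solution is observed to lie in the smaller Eisenstein space $\mathbb{M}$ and is rewritten in the $E_{k,\chi,\psi}(q^t)$ basis. The only cosmetic difference is that you frame the overdetermined system as a consistency check on the containment, whereas the paper treats that containment as already established and uses the extra coefficients purely as corroboration.
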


\begin{example} If $n$ is congruent to $1$ mod $3$, and not divisible by $2$ or $5$,
the number $\mathrm{BB}_n$ of fullerenes with $2n$ carbon atoms is
$\sum_{d|n} \chi_3(d) p(\chi_3(d) d)$, where $p(d)$ is the polynomial
T\begin{align*}
p(d) =\frac{1}{2^{15}3^{13}5^2} \big(809d^9& - 29529 d^8 + 463500 d^7 - 4126380 d^6  \\
+3&8500902 d^5  - 421442982 d^4+ 3622325100 d^3  \\
& - 18042623820 d^2 + 38826517689 d - 24019585289\big).
\end{align*}
In particular, if $n$ is prime and congruent to $1$ mod $3$, then $\mathrm{BB}_n=p(n)$.
\end{example}

 \begin{example} There are $1203397779055806181762759
 $ fullerenes with $10000$ carbon atoms. 
 This trails the leading term $\frac{809}{2^{15}3^{13}5^2}\sigma_9(5000)$ by about $0.73\%$.
 \end{example}
 
 \begin{remark} \label{siegel-weil}
 The Siegel-Weil formula (\cite{siegel}, \cite{Weil:1965aa}) can be used to express each 
 completed generating function $f_{G,\mu}(q^{|G|})$ as a certain Eisenstein series. In more 
 detail, if $\Lambda$ is any Hermitian Eisenstein lattice of signature $(1,n-1)$, $n \geq 2$, and 
 $f_\Lambda(\tau)$ is the generating function as defined in Theorem \ref{curve-weighted-count}, 
 Siegel-Weil can be used to derive an explicit expression for $f_\Lambda$ as a certain 
 holomorphic Eisenstein series.  The precise answer depends at least on
 the discriminant 
 group of the lattice $\Lambda$ and the index of $\Gamma$ in $U^*(\Lambda)$. 
 In principle, these could be computed for each bistratum and used 
 to derive the formula for ${\rm BB}(q)$ without 
 counting any fullerenes at all, though the computation
appears long and difficult. We note that the reduction 
 from the adelic integral in \cite[Theorem 5]{Weil:1965aa} to our expression
  for $f_\Lambda$ uses that every indefinite Hermitian Eisenstein lattice 
  has unitary class number 1 \cite[Theorem 5.24]{Shimura:1964aa}.
 
The difficulty of the computation notwithstanding, the Siegel-Weil formula 
explains some apparent patterns in the formula for ${\rm BB}(q)$ in Theorem 
\ref{main}. For one, it is the reason that every term is an Eisenstein series.
But it also tells us, using almost nothing about the lattice $\Lambda$, 
which automorphic representation the Eisenstein series must lie in. Namely, 
if the dimension of $\Lambda$ is even, then $f_\Lambda$ lies in the 
adelic principle series representation induced from the trivial Hecke 
character, and if the dimension of $\Lambda$ is odd, then it lies in the
 principle series representation induced from the non-trivial 
 Hecke character $\chi_3$ mod 3 (\cite{Kudla:1994aa}, see also \cite{Yamana:2011aa}). 
 In terms of the Eisenstein series $E_{k,\chi,\psi}(q^t)$, this explains why 
 only $E_{k,1,1}$ appears in the formula for ${\rm BB}(q)$ when $k$ is even 
 and only $E_{k,1, \chi_3}$ and $E_{k, \chi_3,1}$ appear when $k$ is odd. 

Using these restrictions, together with the bounds on $t$ that can 
appear in $E_{k,\chi,\psi}(q^t)$ from the level of the lattice, one could 
compute each completed generating function by finding only a
few Fourier coefficients by hand, and then arrive at the final answer 
by inclusion-exclusion. We give an example of such a computation below, 
which gives some indication as to why this was not the method chosen.
 \end{remark}
 
  \begin{example} By Remark \ref{siegel-weil}, each weighted generating
  function $f_{G,\mu}(q)$ lies in a space of Eisenstein series.
  The primary difficulty to explicitly implementing the inclusion-exclusion to compute ${\rm BB}(q)$
  is to count triangulations with correct weights. For each of the 96 triangulable bistrata, the weight
  of a triangulation depends on the ambient bistratum, and the
  mass given in \ref{mass} is no longer valid.
  We illustrate this with the computation of $f_{C_1, \{2,1^{10}\}}(q)$.
  
  There are two triangulations
  with two triangles, whose curvature profiles are $(4,4,4)$ and $(5,5,2)$---the doubled triangle
  and doubled pediment. Both triangulations
  lie in the closure of the bistratum $(C_1, \{2,1^{10}\})$ but the triangulation $\mathcal{T}$ with profile $(5,5,2)$ 
  in $M$ breaks into two orbits $\mathcal{T}_1$, $\mathcal{T}_2$ under $\Gamma_{C_1, \{2,1^{10}\}}$ since the profile $(5,5,2)$
  can arise as two different coalescences: \begin{align*} 
  \mathcal{T}_1\colon (5,5,2)&= (1+1+1+1+1, &&1+1+1+1+1, &&& 2), \\
    \mathcal{T}_2\colon(5,5,2)&= (2+1+1+1, && 1+1+1+1+1, &&& 1+1). 
  \end{align*}
  The order of the stabilizer in $\Gamma_{C_1, \{2,1^{10}\}}$ of the corresponding triangulation
  can be computed via the formula  \cite[Prop.~3.6]{thurston} for the complex link fraction of a coalescence of cone points,
  divided by the subgroup of the automorphism group of the triangulation preserving the structure of the coalescence.
  So, \begin{align*} 
  {\rm Stab}_{\Gamma_{C_1, \{2,1^{10}\}}}(\mathcal{T}_1) &= 
  \frac{1}{2}\cdot \frac{(1-\tfrac{5}{6})^4}{5!} 
  \cdot \frac{(1-\tfrac{5}{6})^4}{5!} \cdot \frac{(1-\tfrac{2}{6})^0}{1!}=\frac{1}{2^{15}3^{10}5^2}, \\
  {\rm Stab}_{\Gamma_{C_1, \{2,1^{10}\}}}(\mathcal{T}_2) &=
  \frac{1}{1}\cdot \frac{(1-\tfrac{5}{6})^3}{1!\cdot 3!} 
  \cdot \frac{(1-\tfrac{5}{6})^4}{5!} \cdot \frac{(1-\tfrac{2}{6})^1}{2!}=\frac{1}{2^{11}3^{10}5}. \end{align*}
 On the other hand, the triangulation $\mathcal{T}'$ with profile $(4,4,4)$ corresponds to a single
 $\Gamma_{C_1, \{2,1^{10}\}}$-orbit with coalescence
 \begin{align*} 
   \mathcal{T}'\colon (4,4,4)&= (2+1+1, &&1+1+1+1, &&& 1+1+1+1), \end{align*}
  and the appropriate weight is 
 \begin{align*} 
  {\rm Stab}_{\Gamma_{C_1, \{2,1^{10}\}}}(\mathcal{T}') &= 
  \frac{1}{2}\cdot \frac{(1-\tfrac{4}{6})^2}{1!\cdot 2!} 
  \cdot \frac{(1-\tfrac{4}{6})^3}{4!} \cdot \frac{(1-\tfrac{4}{6})^3}{4!}=\frac{1}{2^83^{10}}. \end{align*}
  Combining these weights, we get the $q^1$-coefficient
  $$[q^1] \,f_{C_1, \{2,1^{10}\}}(q) = \frac{1}{2^{15}3^{10}5^2} +
\frac{1}{2^{12}3^{10}5}+  \frac{1}{2^83^{10}} = \frac{17\cdot 193}{2^{15}3^{10}5^2}.$$

Performing an analogous calculation for the $\Gamma_{C_1, \{2,1^{10}\}}$-orbits of triangulations 
with $4$ triangles, given by curvature coalescences \vspace{5pt}

\begin{centering}

 \begin{tabular}{lllr}
$(3,3,3,3)= (2+1$, & $1+1+1$, & $1+1+1$, & $1+1+1$), \\
$(5,4,3,0)= (2+1+1+1$, & $1+1+1+1$, & $1+1+1$, & $\emptyset$), \\
$(5,4,3,0)= (1+1+1+1+1$, & $2+1+1$, & $1+1+1$, & $\emptyset$), \\
$(5,4,3,0)= (1+1+1+1+1$, & $1+1+1+1$, & $2+1$, & $\emptyset$), \\
$(5,5,1,1)= (2+1+1+1$, & $1+1+1+1+1$, & $1$, & $1$), \\
$(4,4,2,2)= (2+1+1$, & $1+1+1+1$, & $1+1$, & $1+1$), \\
$(4,4,2,2)= (1+1+1+1$, & $1+1+1+1$, & $1+1$, & $2$), \\
  \end{tabular}
  
  \end{centering}
  \vspace{5pt}
  
\noindent we find that the combined mass $[q^2]\, f_{C_1, \{2,1^{10}\}}(q)$  of these $7$ orbits is
$$\frac{1}{2^{10}3^4}+
 \frac{1}{2^{10}3^9}+ 
  \frac{1}{2^{11}3^85}+ 
    \frac{1}{2^{11}3^95}+ 
     \frac{1}{2^{11}3^95}+ 
      \frac{1}{2^{5}3^8}+ 
            \frac{1}{2^{7}3^9} =  \frac{17\cdot 41}{2^{11}3^9}.$$

Note that $f_{C_1, \{2,1^{10}\}}(q)$ has level $3$ and weight $9$
by Theorem \ref{orbi-weighted-thm}. Then
Remark \ref{siegel-weil} implies that $$f_{C_1, \{2,1^{10}\}}(q)= 
AE_{9,{\bf 1},\chi_3}(q) + BE_{9,\chi_3, {\bf 1}}(q)$$
for some constants $A,B\in \C$. Using Equation \ref{fourier} for
the Fourier coefficients, we find that the unique solution
giving $[q^1]$, $[q^2]$ correctly is $$A= \frac{1}{2^{16} 3^{10} 5^2}, \quad
B = \frac{1}{2^{16} 3^2 5^2}.$$

This answer agrees with the linear solve from Theorem \ref{main}, since
the generic orbifold order along the completed bistratum $\overline{M}_{(C_1, \{2,1^{10}\})}\subset M$
is $3$. Hence the first correction in the inclusion-exclusion is
$${\rm BB}(q) = f_{C_1, \{1^{12}\}}(q) - \tfrac{1}{3}  f_{C_1, \{2,1^{10}\}}(q) +
(\textrm{terms of weight}\leq 8),$$ confirming that the weighted count was performed correctly.
\end{example}
 
 \begin{remark} One strange occurrence is the non-appearance of the generating
 function $ E_{1,\triv,\chi_3}(q^{10})$ for the icosahedral bistratum. Essentially, it means that the
 various inclusions and exclusions of this bistratum end up cancelling perfectly.
 We do not know why this is the case.
 \end{remark}

\appendix
 \section{{\it buckygen} counts (by Jan Goedgebeur)}
 \label{appendix}

In~\cite{BGM12} an algorithm is described to generate all pairwise non-isomorphic fullerenes 
up to a given order $n$. An efficient implementation of this algorithm was made in the program 
called \textit{buckygen} (of which the source code is available at~\cite{{buckygen-site}}). This 
program was already used in~\cite{BGM12} to generate all 
pairwise non-isomorphic (unoriented) 
fullerenes up to 400 vertices. These counts were independently confirmed by the generator 
\textit{fullgen} of Brinkmann and Dress~\cite{BD97} up to 380 vertices.

By default, 
\textit{buckygen} outputs or counts one graph from each isomorphism class. However, there 
is also a built-in option to write one member of each orientation-preserving isomorphism class
 (by using the flag \verb|-o|). In this case, the output graphs are tested for the presence of an 
 orientation-reversing automorphism. If there is none, the mirror image of the graph is output 
 as well. We used this option to generate all oriented fullerenes (i.e.\ considering enantiomorphic 
 fullerenes as distinct) up to 400 vertices. The computation took approximately 3 CPU years 
 and was performed on the supercomputer of the VSC (Flemish Supercomputer Center). 
 The counts are found in Table~\ref{table:counts_fullerenes}. Previously these counts were
  determined up to 98 vertices (cf.\ sequence \href{https://oeis.org/A057210}{A057210}
   in the \textit{On-Line Encyclopedia of Integer Sequences}~\cite{OEIS}).
   
   \newpage

\begin{centering}

\begin{table}[H]
\caption{Counts $\mathrm{BB}_n$ of oriented fullerenes with $2n$ vertices} \label{table:counts_fullerenes} 
\begin{small}
\begin{tabular}{|cc||cc||cc||cc|}
\hline
$2n$ & $\mathrm{BB}_n$ & $2n$ & $\mathrm{BB}_n$ & $2n$ & $\mathrm{BB}_n$ & $2n$ & $\mathrm{BB}_n$ \\
\hline
0-18 &	0 &	114 &	2 014 713 &	210 &	672 960 919 &	306 &	22 328 857 779\\
20 &	1 &	116 &	2 411 814 &	212 &	739 119 987 &	308 &	23 803 804 599\\
22 &	0 &	118 &	2 814 401 &	214 &	803 025 838 &	310 &	25 177 747 311\\
24 &	1 &	120 &	3 345 147 &	216 &	880 381 442 &	312 &	26 820 388 881\\
26 &	1 &	122 &	3 882 755 &	218 &	954 791 893 &	314 &	28 342 433 015\\
28 &	3 &	124 &	4 588 131 &	220 &	1 045 149 256 &	316 &	30 170 072 140\\
30 &	3 &	126 &	5 297 876 &	222 &	1 131 745 304 &	318 &	31 860 956 489\\
32 &	10 &	128 &	6 224 194 &	224 &	1 236 558 914 &	320 &	33 883 714 816\\
34 &	9 &	130 &	7 157 006 &	226 &	1 337 268 127 &	322 &	35 760 178 173\\
36 &	23 &	132 &	8 359 652 &	228 &	1 458 771 168 &	324 &	38 003 821 487\\
38 &	30 &	134 &	9 570 462 &	230 &	1 575 048 577 &	326 &	40 074 384 246\\
40 &	66 &	136 &	11 128 035 &	232 &	1 715 797 098 &	328 &	42 560 801 885\\
42 &	80 &	138 &	12 683 755 &	234 &	1 850 016 768 &	330 &	44 852 179 762\\
44 &	162 &	140 &	14 676 481 &	236 &	2 011 965 672 &	332 &	47 592 925 209\\
46 &	209 &	142 &	16 671 248 &	238 &	2 166 827 672 &	334 &	50 126 102 754\\
48 &	374 &	144 &	19 201 153 &	240 &	2 353 180 355 &	336 &	53 155 439 383\\
50 &	507 &	146 &	21 728 036 &	242 &	2 530 571 274 &	338 &	55 939 718 941\\
52 &	835 &	148 &	24 930 330 &	244 &	2 744 801 058 &	340 &	59 284 163 396\\
54 &	1 113 &	150 &	28 109 625 &	246 &	2 948 134 826 &	342 &	62 354 562 777\\
56 &	1 778 &	152 &	32 122 355 &	248 &	3 192 869 772 &	344 &	66 028 033 946\\
58 &	2 344 &	154 &	36 112 223 &	250 &	3 425 774 760 &	346 &	69 410 105 709\\
60 &	3 532 &	156 &	41 107 620 &	252 &	3 705 433 966 &	348 &	73 456 137 608\\
62 &	4 670 &	158 &	46 064 096 &	254 &	3 970 400 266 &	350 &	77 160 823 316\\
64 &	6 796 &	160 &	52 272 782 &	256 &	4 289 774 460 &	352 &	81 612 318 561\\
66 &	8 825 &	162 &	58 393 248 &	258 &	4 591 482 273 &	354 &	85 683 960 523\\
68 &	12 501 &	164 &	66 032 535 &	260 &	4 953 928 565 &	356 &	90 556 797 909\\
70 &	16 091 &	166 &	73 582 782 &	262 &	5 297 277 909 &	358 &	95 026 883 784\\
72 &	22 142 &	168 &	82 940 953 &	264 &	5 708 945 811 &	360 &	100 377 559 914\\
74 &	28 232 &	170 &	92 160 881 &	266 &	6 097 098 730 &	362 &	105 257 194 470\\
76 &	38 016 &	172 &	103 602 394 &	268 &	6 564 284 155 &	364 &	111 124 529 629\\
78 &	47 868 &	174 &	114 816 693 &	270 &	7 003 726 743 &	366 &	116 472 010 663\\
80 &	63 416 &	176 &	128 686 912 &	272 &	7 530 784 730 &	368 &	122 874 832 878\\
82 &	79 023 &	178 &	142 308 166 &	274 &	8 027 877 473 &	370 &	128 726 803 405\\
84 &	102 684 &	180 &	159 057 604 &	276 &	8 623 164 190 &	372 &	135 735 763 019\\
86 &	126 973 &	182 &	175 455 386 &	278 &	9 181 941 021 &	374 &	142 104 866 311\\
88 &	162 793 &	184 &	195 657 995 &	280 &	9 853 806 446 &	376 &	149 768 451 649\\
90 &	199 128 &	186 &	215 335 951 &	282 &	10 482 937 182 &	378 &	156 727 483 454\\
92 &	252 082 &	188 &	239 498 752 &	284 &	11 236 734 954 &	380 &	165 065 400 161\\
94 &	306 061 &	190 &	263 096 297 &	286 &	11 944 678 343 &	382 &	172 658 723 068\\
96 &	382 627 &	192 &	291 923 618 &	288 &	12 791 770 724 &	384 &	181 761 605 787\\
98 &	461 020 &	194 &	319 971 240 &	290 &	13 583 361 145 &	386 &	190 001 957 648\\
100 &	570 603 &	196 &	354 321 904 &	292 &	14 534 386 155 &	388 &	199 925 650 318\\
102 &	682 017 &	198 &	387 597 455 &	294 &	15 421 369 613 &	390 &	208 906 495 800\\
104 &	836 457 &	200 &	428 220 000 &	296 &	16 483 227 350 &	392 &	219 673 875 162\\
106 &	993 461 &	202 &	467 658 270 &	298 &	17 476 267 675 &	394 &	229 445 258 636\\
108 &	1 206 782 &	204 &	515 596 902 &	300 &	18 663 927 061 &	396 &	241 169 817 560\\
110 &	1 424 663 &	206 &	561 974 100 &	302 &	19 768 989 050 &	398 &	251 745 893 960\\
112 &	1 718 034 &	208 &	618 503 629 &	304 &	21 096 191 945 &	400 &	264 495 159 034\\
\hline
\end{tabular}
\end{small}
\end{table}

\end{centering}

\end{document}